\documentclass[11pt]{amsart}
\usepackage{amssymb,amsmath,amsthm}
\usepackage{amsfonts}
\usepackage{enumerate}
\usepackage{dsfont}
\usepackage{cite}
\usepackage{color} 
\usepackage[colorlinks,citecolor=red]{hyperref}
\usepackage{cleveref}
\usepackage{mathrsfs}
\usepackage{epsfig}
\usepackage{lscape}
\usepackage{subfigure}
\usepackage{epstopdf}
\usepackage{caption}
\usepackage{algorithm}
\usepackage{algpseudocode}
\usepackage{multirow}
\usepackage{geometry}
\usepackage{paralist}
\usepackage{enumerate}
\usepackage{url}
\usepackage{graphicx,cite}
\usepackage{booktabs}   
\usepackage{makecell}   
\usepackage{array}      
\usepackage{makecell}
\usepackage{longtable}
\usepackage{tabularx} 
\usepackage{tikz}
\usetikzlibrary{arrows.meta, positioning, decorations.pathreplacing, calc,angles,quotes}

\usepackage{bbm}

\newcommand{\argmin}[1]{\mathop{\rm argmin}\limits_{#1}}

\newtheorem{definition}{Definition}[section]

\newtheorem{prop}[definition]{Proposition}
\newtheorem{theorem}[definition]{Theorem}
\newtheorem{lemma}[definition]{Lemma}

\newtheorem{remark}[definition]{Remark}

\newtheorem{assumption}{Assumption}[section]
\date{}

\begin{document}
\newpage
\baselineskip 18pt
\title
[Subspace-constrained preconditioning for RIM]
{On subspace-constrained preconditioning for randomized iterative methods}
\author{Yonghan Sun}
\address{School of Mathematical Sciences, Beihang University, Beijing, 100191, China.}
\email{sunyonghan@buaa.edu.cn}

\author{Hou-Duo Qi}
\address{Department of Data Science and Artificial Intelligence and Department of Applied Mathematics, The Hong Kong Polytechnic University, Hung Hom, Kowloon, Hong Kong, China.}
\email{houduo.qi@polyu.edu.hk}

\author{Deren Han}
\address{LMIB of the Ministry of Education, School of Mathematical Sciences, Beihang University, Beijing, 100191, China.}
\email{handr@buaa.edu.cn}

\author{Jiaxin Xie}
\address{LMIB of the Ministry of Education, School of Mathematical Sciences, Beihang University, Beijing, 100191, China.}
\email{xiejx@buaa.edu.cn}

\begin{abstract}
In this paper, we further investigate and refine the subspace-constrained preconditioning technique to enhance the theoretical and numerical convergence properties of randomized iterative methods for solving linear systems. In particular, we design a QR-like factorization that transforms the original linear system into an equivalent block-orthogonal form, thus avoiding the full-rank assumptions adopted in existing work. Moreover, this reformulation reduces the problem to solving a smaller linear system with a favorable singular value distribution, provided an appropriate initial point is employed. The proposed framework can be implemented implicitly within the iteration and does not require explicitly constructing either a preconditioner matrix or a preconditioned linear system, which eliminates the prohibitive cost of forming a fully preconditioned system. Furthermore, we construct orthogonalized search directions from stochastic gradients and develop accelerated variants of the framework. We prove that the proposed algorithmic framework converges linearly in expectation. Numerical experiments demonstrate the benefits of the proposed preconditioning strategy.
\end{abstract}
\maketitle

\let\thefootnote\relax\footnotetext{Key words: linear systems, subspace-constrained preconditioner, randomized iterative methods, iterative sketching, Kaczmarz method, Krylov subspace methods}
\let\thefootnote\relax\footnotetext{Mathematics subject classification (2020): 65F10, 65F08, 65F20, 90C25, 15A06, 68W20}

\section{Introduction}

Randomized iterative methods \cite{Needell2016Stochastic,necoara2019faster,strohmer2009randomized,Zeng2024adaptive,Gower2015Randomized,sun2025connecting} form a powerful class of algorithms for solving the large-scale linear system
\begin{equation}\label{main-prob}
	Ax = b,\ A \in \mathbb{R}^{m \times n},\ b \in \mathbb{R}^m.
\end{equation}
These methods can be regarded as iterative sketching approaches in the randomized numerical linear algebra paradigm \cite{Derezinski2024Recent,Martinsson2020Randomized}, and also serve as extensions of stochastic approximation methods from optimization, including stochastic gradient descent (SGD) \cite{Needell2016Stochastic,Robbins1951stochastic,Ma2019Stochastic}.  However, such methods may converge slowly for ill-conditioned coefficient matrices, and therefore a preconditioner is needed \cite{avron2010blendenpik,meier2024sketch,meng2014lsrn,benzi2002preconditioning,epperly2026fast,lok2025subspace}.  

The randomized Kaczmarz (RK) method \cite{strohmer2009randomized} is one of the most widely used iterative methods for solving large-scale linear systems in recent years due to its simplicity and efficiency. Recently, Lok and Rebrova \cite{lok2024subspace} incorporated subspace constraints into the RK method, developing the subspace-constrained RK (SCRK) method and demonstrating that it achieves superior numerical efficiency over the standard RK method.
They further extended this technique to the randomized block Kaczmarz (RBK) method \cite{needell2014paved}, however, under the assumption that the sampled submatrices possess full row rank \cite[Remark 3.9]{lok2024subspace}.
 In this paper, we extend this subspace-constrained preconditioning technique to a broader class of randomized iterative methods. Our analysis removes the full row rank restriction on sampled submatrices and generalizes the subspace-constrained preconditioner to arbitrary linear systems. 

 We next briefly illustrate the generalization from the standard RK method to the general randomized iterative method adopted in this work. For any \(i \in [m]:=\{1,\ldots,m\}\), let \(e_i\) denote the \(i\)-th unit vector, \(\top\) denotes the transpose, \(a^\top_{i}\) denote the \(i\)-th row of \(A\), \(b_i\) denote the \(i\)-th entry of \(b\), and let \(\|\cdot\|_2\) and \(\|\cdot\|_F\) denote the Euclidean and Frobenius norms, respectively. 
 The RK method \cite{strohmer2009randomized}  employs the following update formula
 \begin{equation}
 	\label{RK-iteration}
 	x^{k+1} = x^k - \frac{\langle a_{i_k}, x^k\rangle - b_{i_k}}{\|a_{i_k}\|_2^2} a_{i_k}=x^k-\alpha_k A^\top e_{i_k} e_{i_k}^\top (Ax^k-b),
 \end{equation}
 where  the \(i_k\)-th row is selected with probability \(\|a_{i}\|_2^2 / \|A\|_F^2\),  and \(\alpha_k = 1 / \|a_{i_k}\|_2^2\) is the stepsize.
 Noting that $e_{i_k}$ in \eqref{RK-iteration} acts as the role that extracts partial information of $A$, we extend $e_{i_k}$ to a general randomized sketching matrix $S_k\in \mathbb{R}^{m\times q}$ and apply the following iteration scheme
 \begin{equation}
 	\label{RIM-iteration}
 	x^{k+1} =x^k-\alpha_k A^\top S_{k} S_{k}^\top (Ax^k-b).
 \end{equation}
 The  matrix $S_k$ is sampled from a user-defined probability space ${(\Omega,\mathcal{F},\mathbf{P})}$. This is precisely the randomized iterative method proposed in \cite{Zeng2024adaptive,xie2025randomized} for solving linear systems.  We note that by choosing the probability space $(\Omega, \mathcal{F}, \mathbf{P})$ in \eqref{RIM-iteration} appropriately, one can recover existing methods or develop new ones \cite[Section 5]{xie2025randomized}, such as the RBK method \cite{needell2014paved} and the randomized average block Kaczmarz (RABK) method \cite{necoara2019faster}.

\subsection{Our contributions}
For any index subset $\mathcal{I}\subseteq[m]$, we use $A_{\mathcal{I}}$ and $b_{\mathcal{I}}$ to denote the row submatrix of $A$ and the subvector of $b$ indexed by $\mathcal{I}$, respectively. Let $\mathcal{I}_p \subset [m] $ be a fixed index set and  $\mathcal{I}_r = [m] \setminus \mathcal{I}_p$. The key idea of the subspace-constrained preconditioner is that we enforce the subsystem $A_{\mathcal{I}_p}x=b_{\mathcal{I}_p}$ to hold exactly throughout the iteration. That is, every iterate $x^k$ is required to satisfy $A_{\mathcal{I}_p}x^k=b_{\mathcal{I}_p}$. This strategy of fixing the row block $A_{\mathcal{I}_p}$ is particularly beneficial when such a block possesses favorable structural or spectral properties \cite{xie2021subset}.

 The main contributions of this work are as follows.
 
\begin{itemize}
	\item [1. ] We introduce a QR-like factorization of the coefficient matrix \(A^\top\) based on the fixed submatrix \(A_{\mathcal{I}_p}\). This factorization allows us not only to transform the original linear system into an equivalent preconditioned system, but also to avoid the full-rank assumptions adopted in \cite[Remark 3.9]{lok2024subspace}. Thus, it further extends  the applicability of the subspace-constrained preconditioner to any type of linear system, including underdetermined, overdetermined, full-rank, and rank-deficient matrices.
	\item [2. ] We introduce a projected SGD  to solve the preconditioned system, where the projection enforces the subsystem \(A_{\mathcal{I}_p}x = b_{\mathcal{I}_p}\) to hold exactly throughout the iteration. We show that, when the initial point is chosen appropriately, the proposed method is equivalent to applying the randomized iterative method \eqref{RIM-iteration} with preconditioner \(P := I - A_{\mathcal{I}_p}^\dagger A_{\mathcal{I}_p}\) to solve the reduced linear system
	$
	A_{\mathcal{I}_r}  x = b_{\mathcal{I}_r},
	$
	where  \(A_{\mathcal{I}_p}^\dagger\) denotes the pseudoinverse of \(A_{\mathcal{I}_p}\). This equivalence enables parallel implementation and greatly reduces computational cost, while the smaller matrix \(A_{\mathcal{I}_r} P\) enjoys a more favorable singular value distribution than the original coefficient matrix \(A\).
	
	\item [3. ] We prove that the proposed subspace-constrained randomized iterative method (SCRIM) converges linearly in expectation, with a convergence factor that depends on both the choice of the probability space and the reduced matrix \(A_{\mathcal{I}_r} P\). We prove that the subspace-constrained preconditioning yields a tighter convergence factor than its unconstrained counterpart. In addition, we establish a decoupled upper bound on the convergence factor and, based on this bound, provide practical strategies for selecting the constraint subspace.
	\item [4. ] We then replace the stochastic gradient direction used by SCRIM with orthogonalized search directions, motivated by the success of the orthogonal direction method~\cite{shewchuk1994introduction}. Specifically, each descent direction is constructed by applying a truncated Gram-Schmidt orthogonalization process to the stochastic gradients. We demonstrate that the resulting framework constitutes an iterative-sketching-based Krylov subspace (IS-Krylov) method with subspace-constrained preconditioning, and we prove that the proposed SC-IS-Krylov method achieves a tighter  upper bound for convergence. Numerical experiments validate the theoretical findings and demonstrate the efficiency of the proposed method.
\end{itemize}

\subsection{Related work}
\subsubsection{The Kaczmarz method}

The literature on solving linear systems via iterative methods is vast and has a long history \cite{Saad2003Iterative,kelley1995iterative,scott2025sparse}. Row-action methods, for instance, have been proposed as practical approaches for solving large-scale linear systems. These methods access and process one or a few rows of \(A\) at a time. A prominent example of the row-action method is the Kaczmarz method \cite{Kac37} and its randomized variant \cite{strohmer2009randomized}. In recent years, a large amount of work has been devoted to the development of Kaczmarz-type methods, including block Kaczmarz methods \cite{necoara2019faster,needell2014paved,xie2025randomized,Gower2015Randomized,Xiang2025Randomized}, accelerated RK methods \cite{sun2025connecting,Zeng2024adaptive,Rieger2023Generalized,Liu2016accelerated,Loizou2020Momentum,han2026pseudoinverse,wang2026linear,su2024greedy}, randomized Douglas-Rachford methods \cite{han2024randomized,guo2025enhanced}, and others.

Very recently, Lok and Rebrova \cite{lok2024subspace} introduced the subspace-constrained randomized Kaczmarz (SCRK) method and its block extensions, as briefly reviewed in Remarks  \ref{remark-SCRBK} and \ref{remark-SCRK1}. The core idea of SCRK is to enforce the subsystems $A_{\mathcal{I}_p}x = b_{\mathcal{I}_p}$ to hold exactly at all iterations, such that every generated iterate $x^k$ strictly satisfies $A_{\mathcal{I}_p}x^k = b_{\mathcal{I}_p}$.  In this paper, we extend this line of research by proposing a subspace-constrained preconditioner for a broad range of randomized iterative methods \eqref{RIM-iteration}. Furthermore, based on a QR-like factorization, our method does not require the full-row-rank assumption, which significantly broadens and enriches the results presented in \cite{lok2024subspace}.

\subsubsection{Preconditioned SGD}

SGD \cite{Robbins1951stochastic,nemirovski2009robust,gower2019sgd,garrigos2023handbook} has recently gained popularity for solving large-scale optimization problems of the generic form
\(
\min_x \, F(x) := \mathbb{E}[f(x;\xi)],
\)
where $\xi$ is sampled from $(\Omega, \mathcal{F}, \mathbf{P})$. 
However, the convergence of SGD can be slow when the problem is ill-conditioned. To mitigate this issue, one can incorporate a preconditioner that scales the gradient direction, leading to preconditioned SGD
\begin{equation}\label{iteration-pre-SGD}
	x^{k+1} = x^k - \alpha_k M \nabla f(x^k; \xi_k),
\end{equation}
where $M$ is a symmetric positive definite preconditioner, $\alpha_k > 0$ denotes the stepsize, and $\nabla f(x^k;\xi_k)$ stands for the stochastic gradient evaluated at $x^k$ based on an i.i.d. sample $\xi_k$. The standard SGD is recovered when $M = I$.
A substantial body of work has explored preconditioned SGD \cite{ye2024preconditioning,zhang2020gradient,kovalev2025sgd,scott2025designing,li2017preconditioned}, with the design of the preconditioner $M$ broadly falling into two paradigms. The first employs diagonal preconditioners (e.g., Adam \cite{kingma2014adam}, AMSGrad \cite{reddi2019convergence}), which adaptively rescale coordinates using historical second-moment estimates. 
The second paradigm constructs curvature-aware preconditioners by approximating second-order structure \cite{schraudolph2002fast,fletcher2013practical,garg2025second,martens2015optimizing}, such as the Hessian or Fisher information matrix, to improve conditioning.

In many applications, one may require $x$ to belong to a certain constraint set $\mathcal{X}$. This motivates the projected variant of SGD 
\begin{equation}\label{iteration-pro-SGD}
	x^{k+1} = \Pi_{\mathcal{X}} \left( x^k - \alpha_k \nabla f(x^k; \xi_k) \right),
\end{equation}
where $\Pi_{\mathcal{X}}$ denotes the projection onto the constraint set $\mathcal{X}$. We note that the randomized iterative method \eqref{RIM-iteration} can be viewed as SGD for solving the stochastic optimization problem
\(
\min_x \frac{1}{2} \mathbb{E}\!\left[ \|S^\top (A x - b)\|_2^2 \right]
\)
reformulated from the linear system.  Indeed, our proposed subspace-constrained iterative framework admits a dual interpretation as both a preconditioned SGD scheme and a projected SGD scheme; see Section \ref{sec-3}. This unified dual perspective provides a novel paradigm for designing and analyzing preconditioned randomized iterative methods for large-scale linear systems.


\subsubsection{Minimal error methods}

Minimal error methods \cite{weiss1995theoretical,gaul2013framework,weiss1994error} are iterative methods that compute the next iterate $x^k$ by minimizing $\|x - x^*\|_2^2$ over a prescribed search space $\mathcal{L}_k$, i.e.,
\begin{equation}\label{eq-MEM}
	x^{k+1} = \argmin{x \in \mathcal{L}_k} \|x - x^*\|_2^2,
\end{equation}
where $x^*$ denotes a certain solution of the linear system $Ax = b$. This approach directly targets proximity to the true solution, in contrast to residual-minimizing methods such as GMRES \cite{saad1986gmres}, which minimize $\|b - Ax\|_2^2$ instead.
When $A \in \mathbb{R}^{n \times n}$ is nonsingular and the search space is chosen as 
\(\mathcal{L}_k = x^0 + \mathcal{K}_k(A^\top, A^\top r^0)\) with an initial point $x^0$ and initial residual $r^0 = b - A x^0$, the minimal error method \eqref{eq-MEM} reduces to the generalized minimal error (GMERR) method \cite{weiss1994error,ehrig1997gmerr}. Here $\mathcal{K}_k(A^\top, A^\top r^0)$ denotes the Krylov subspace of order $k$, whose definition can be found in \eqref{def-krylov}.
When $A \in \mathbb{R}^{m \times n}$ is possibly rectangular, the minimal error method \eqref{eq-MEM} with search space $\mathcal{L}_k = x^0 + \mathcal{K}_k(A^\top A, A^\top r^0)$ coincides with the conjugate gradient normal equation (CGNE) method \cite{craig1955n}.

Note that the error-minimizing scheme amounts to projecting the true solution $x^*$ onto the subspace $\mathcal{L}_k$ at iteration $k$. It faces two fundamental challenges: (1) $x^*$ is unknown in practice and (2) the projection onto $\mathcal{L}_k$ may be computationally expensive. Classical minimal error methods circumvent the dependence on $x^*$ by constructing $\mathcal{L}_k$ as an affine shift of a Krylov subspace, so that the projection can be reformulated entirely in terms of accessible quantities such as residuals and matrix-vector products. To facilitate efficient computation of the projection, these methods build an orthogonal basis for $\mathcal{L}_k$. A canonical realization of this strategy is the orthogonal direction method \cite{weiss1995theoretical,weiss1994error}, which generates a sequence of mutually orthogonal search directions $p^0, p^1, \dots, p^k$ and updates the iterate by minimizing $\|x - x^*\|_2^2$ along each new direction; see Section \ref{sec-4}.

Inspired by the intrinsic error-minimization principle and orthogonal-direction acceleration mechanism of classical minimal error methods, we further enhance the convergence performance of our subspace-constrained iterative framework. This algorithmic improvement ultimately gives rise to the proposed subspace-constrained iterative-sketching-based Krylov subspace (SC-IS-Krylov) method.

\subsection{Organization}
The remainder of this paper is organized as follows. In Section \ref{sec-2}, we introduce the notation and preliminary results used throughout the paper. 
Section~\ref{sec-3} develops
the subspace-constrained randomized iterative method, and establishes its linear convergence in expectation. 
Section \ref{sec-4} proposes subspace-constrained iterative-sketching-based Krylov methods and analyzes their convergence properties.
Numerical experiments demonstrating the effectiveness of the proposed methods are presented in Section \ref{sec-5}. 
Section~\ref{sec-6} concludes the paper. Proofs of the main results involving lengthy derivations  are provided in the appendices.

\section{Notation and preliminaries}\label{sec-2}	
\subsection{Notations}

	For vector $x\in\mathbb{R}^n$, we use $x_i,x^\top$, and $\|x\|_2$ to denote the $i$-th entry, the transpose, and the Euclidean norm of $x$, respectively. 
	 We denote the linear span of vectors $x^1,\ldots, x^k \in \mathbb{R}^n$ as \( \operatorname{span}\{ x^1,\ldots, x^k \} \).
	 For any matrix $A \in \mathbb{R}^{m \times n}$, we use $a^\top_i$,  $A^\dagger$, $A_{\mathcal{J}}$, $A_{\mathcal{J}}^\dagger$, $A^\top$, $\|A\|_2$, $\|A\|_F$, $\operatorname{rank}(A)$, and $\operatorname{Range}(A)$ to denote the $i$-th row, the Moore-Penrose pseudoinverse,  the row submatrix indexed by $\mathcal{J}$, the pseudoinverse of the row submatrix $A_{\mathcal{J}}$, the transpose, the spectral norm, the Frobenius norm, the rank, and the column space, respectively. We use $\lambda_{\min}(A)$  to denote the smallest eigenvalue of a  symmetric matrix $A$, and $\sigma_{\min}(A)$ to denote the smallest nonzero singular value of a general matrix $A$.  
	 The cardinality of the set $\mathcal{J}$ is denoted by $ \vert \mathcal{J}\vert$.
	For any random variables $\xi$, we use $\mathbb{E}[\xi]$ to denote the expectation of $\xi$. 

\subsection{A QR-like factorization}
\label{sec-3.1}

In this subsection, we introduce a QR-like factorization for $A^\top$. 
We note that this factorization is introduced solely for the purpose of theoretical analysis and algorithm development, it is not explicitly computed in the actual implementation of our method.

Let $\mathcal{I}_p \subset [m]$ be a fixed index set with cardinality $m_p = |\mathcal{I}_p|$, and define $\mathcal{I}_r = [m] \setminus \mathcal{I}_p$ with $m_r= |\mathcal{I}_r|$. Without loss of generality, we assume $\mathcal{I}_p =[m_p]$.
Let $P := I - A_{\mathcal{I}_p}^\dagger A_{\mathcal{I}_p}$. Using the identity 
\(
A_{\mathcal{I}_p}^\top(A_{\mathcal{I}_p}^\dagger)^\top A_{\mathcal{I}_r}^\top+PA_{\mathcal{I}_r}^\top=A_{\mathcal{I}_r}^\top,
\)
we obtain
\begin{equation}\label{eq-Atop-QR-like}
	A^\top=\begin{pmatrix}
		A_{\mathcal{I}_p}^\top & A_{\mathcal{I}_r}^\top
	\end{pmatrix}
	=\underbrace{\begin{pmatrix}
			A_{\mathcal{I}_p}^\top & PA_{\mathcal{I}_r}^\top
	\end{pmatrix}}_{\hat{A}^\top}
	\underbrace{\begin{pmatrix}
			I & (A_{\mathcal{I}_p}^\dagger)^\top A_{\mathcal{I}_r}^\top\\
			0 & I
	\end{pmatrix}}_{L^\top}
	=\hat{A}^\top L^\top.
\end{equation}
Substituting $P=I-A_{\mathcal{I}_p}^\dagger A_{\mathcal{I}_p}$ and employing the property $A_{\mathcal{I}_p}A_{\mathcal{I}_p}^\dagger A_{\mathcal{I}_p}=A_{\mathcal{I}_p}$, we derive
\[
A_{\mathcal{I}_p}(A_{\mathcal{I}_r}P)^\top
= A_{\mathcal{I}_p} P A_{\mathcal{I}_r}^\top
= \bigl(A_{\mathcal{I}_p}-A_{\mathcal{I}_p}A_{\mathcal{I}_p}^\dagger A_{\mathcal{I}_p}\bigr)A_{\mathcal{I}_r}^\top
=0,
\]
which verifies that $\hat{A}^\top$ has orthogonal column blocks. Hence, the factorization $A^\top = \hat{A}^\top L^\top$ in \eqref{eq-Atop-QR-like} decomposes $A^\top$ into the product of a matrix $\hat{A}^\top$ with orthogonal column blocks and a unit upper block triangular matrix $L^\top$. This yields a QR-like factorization \cite[Section 5.2]{golub2013matrix} for $A^\top$, where only the block columns of $\hat{A}^\top$ satisfy the orthogonality condition.

Now, the linear system \eqref{main-prob} can be rewritten as the following preconditioned system
\begin{equation}\label{main-pre-prob}
	\hat{A} x = \hat{b} \ \ \text{with} \ \ \hat{b} := L^{-1} b = 
	\begin{pmatrix}
		b_{\mathcal{I}_p} \\
		b_{\mathcal{I}_r} - A_{\mathcal{I}_r} A_{\mathcal{I}_p}^\dagger b_{\mathcal{I}_p}
	\end{pmatrix}.
\end{equation} 
To solve \eqref{main-pre-prob}, we note that the proposed method in Section \ref{sec-3} (see \eqref{eq-bas-xk+1-2}) is equivalent to applying a randomized iterative method \eqref{RIM-iteration}  to the reduced linear system
\[
 A_{\mathcal{I}_r} P x = \hat{b}_{\mathcal{I}_r}.
\]
We next present a result characterizing the singular value distribution of $A_{\mathcal{I}_r} P$ in comparison with the original coefficient matrix $A$.

\begin{lemma}\label{lem-interlacing}
	Let \( r = \operatorname{rank}(A) \) and \( r_p = \operatorname{rank}(A_{\mathcal{I}_p}) \).
	Then
	\(
	\operatorname{rank}(A_{\mathcal{I}_r} P) = r - r_p,
	\)
	and the following interlacing inequalities hold
	\[
		\sigma_{i+r_p}(A) \leq \sigma_i(A_{\mathcal{I}_r}P) \leq \sigma_i(A), \quad i=1,\ldots,n-r_p,
	\]
	where $\sigma_i(\cdot)$ denotes the $i$-th largest singular value. 
\end{lemma}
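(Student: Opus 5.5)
We will work with the Gram matrices. Since $A^\top=\hat A^\top L^\top$ by \eqref{eq-Atop-QR-like} and $L$ is invertible, we have $\operatorname{Range}(A^\top)=\operatorname{Range}(\hat A^\top)$. The two column blocks $A_{\mathcal{I}_p}^\top$ and $PA_{\mathcal{I}_r}^\top$ of $\hat A^\top$ are orthogonal, so their ranges are orthogonal subspaces and
\[
\operatorname{rank}(A)=\operatorname{rank}(\hat A)=\operatorname{rank}(A_{\mathcal{I}_p})+\operatorname{rank}(PA_{\mathcal{I}_r}^\top).
\]
This gives $\operatorname{rank}(A_{\mathcal{I}_r}P)=r-r_p$.

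For the singular values, compare the $n\times n$ positive semidefinite matrices
\[
A^\top A=A_{\mathcal{I}_p}^\top A_{\mathcal{I}_p}+A_{\mathcal{I}_r}^\top A_{\mathcal{I}_r}
\quad\text{and}\quad
PA_{\mathcal{I}_r}^\top A_{\mathcal{I}_r}P .
\]
Their eigenvalues are $\sigma_i(A)^2$ and $\sigma_i(A_{\mathcal{I}_r}P)^2$, padded with zeros up to index $n$. The plan is to invoke the Cauchy/Weyl-type interlacing for compressions by an orthogonal projector. Since $P$ is the orthogonal projector onto $\operatorname{Null}(A_{\mathcal{I}_p})$, we have $A_{\mathcal{I}_p}P=0$. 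Hence
\[
PA^\top AP=PA_{\mathcal{I}_r}^\top A_{\mathcal{I}_r}P .
\]
Choose an orthonormal basis $U\in\mathbb{R}^{n\times(n-r_p)}$ of $\operatorname{Range}(P)$, so that $P=UU^\top$. The nonzero eigenvalues of $PA^\top AP$ coincide with those of $U^\top A^\top A U$, which is an $(n-r_p)\times(n-r_p)$ principal compression of $A^\top A$ in an orthonormal basis. Cauchy's interlacing theorem (Poincaré separation) then gives
\[
\lambda_{i+r_p}(A^\top A)\le\lambda_i(U^\top A^\top AU)\le\lambda_i(A^\top A),\qquad i=1,\ldots,n-r_p .
\]
Taking square roots yields the claimed inequalities. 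The singular values of $A_{\mathcal{I}_r}P$ are those of $A_{\mathcal{I}_r}U$, since $A_{\mathcal{I}_r}P=A_{\mathcal{I}_r}UU^\top$ and $U^\top$ has orthonormal rows.

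The main step to check carefully is the bookkeeping of indices. The matrix $A_{\mathcal{I}_r}P$ has $n$ singular values when zeros are padded, whereas the compression has only $n-r_p$ eigenvalues; we must confirm that the indexing $i=1,\ldots,n-r_p$ matches. The eigenvalues of $A_{\mathcal{I}_r}^\top$-side Gram matrices $A_{\mathcal{I}_r}PP A_{\mathcal{I}_r}^\top$ versus $PA_{\mathcal{I}_r}^\top A_{\mathcal{I}_r}P$ differ only by zeros, so the convention $\sigma_i=0$ for $i$ beyond the rank handles this consistently. We also need to ensure that Cauchy interlacing is applied in its general form for $V^\top HV$ with $V$ having orthonormal columns, which follows from Courant--Fischer min-max. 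Both bounds come out of min-max. For the upper bound, restrict the maximizing subspaces to $\operatorname{Range}(U)$. For the lower bound, intersect an $(i+r_p)$-dimensional optimal subspace of $A^\top A$ with $\operatorname{Range}(P)$; the intersection has dimension at least $i$, because $\operatorname{codim}\operatorname{Range}(P)=r_p$.
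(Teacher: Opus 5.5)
Your proposal is correct and follows essentially the same route as the paper. The rank identity comes from $\operatorname{Range}(A^\top)=\operatorname{Range}(\hat A^\top)$ with orthogonal column blocks, and the interlacing comes from the Poincar\'e separation theorem applied to $A^\top A$ compressed by an orthonormal basis of $\operatorname{Null}(A_{\mathcal{I}_p})$. Your only additions are the explicit zero-padding convention for the indices and a Courant--Fischer sketch of the separation theorem, which the paper simply cites.
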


\begin{remark}
	\label{remark-sv-distribution}
 Lemma \ref{lem-interlacing}  implies that the  singular values of the reduced matrix $A_{\mathcal I_r}P$ are more concentrated than those of $A$, leading to a more favorable singular value distribution. The narrowed range of singular values also reduces the condition number $\|A_{\mathcal{I}_r}P\|_2/\sigma_{\min}(A_{\mathcal{I}_r}P)$ of $A_{\mathcal{I}_r}P$.
\end{remark}

In general, $A^\dagger\neq \begin{pmatrix}
	A_{\mathcal{I}_p}^\dagger & A_{\mathcal{I}_r}^\dagger
\end{pmatrix} $. However, if $A_{\mathcal{I}_p} A_{\mathcal{I}_r}^\top=0$, the following lemma shows that $A^\dagger= \begin{pmatrix}
A_{\mathcal{I}_p}^\dagger & A_{\mathcal{I}_r}^\dagger
\end{pmatrix} $. Lemma~\ref{prop-ortho} is useful in our argument and we believe it is of independent interest.
\begin{lemma}\label{prop-ortho}
	Let $B\in\mathbb{R}^{m_p\times n}$ and $C\in\mathbb{R}^{m_r\times n}$ be matrices satisfying $BC^\top=0$. Then,
	\[
	\begin{aligned}
		\begin{pmatrix}
			B\\
			C
		\end{pmatrix}^\dagger=\begin{pmatrix}
			B^\dagger&C^\dagger
		\end{pmatrix}.
	\end{aligned}
	\]   
\end{lemma}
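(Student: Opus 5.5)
We verify directly that the candidate $X := \begin{pmatrix} B^\dagger & C^\dagger\end{pmatrix}\in\mathbb{R}^{n\times(m_p+m_r)}$ satisfies the four Penrose conditions for $M := \begin{pmatrix} B\\ C\end{pmatrix}$. Uniqueness of the Moore--Penrose pseudoinverse then gives $M^\dagger = X$. The only structural input is the hypothesis $BC^\top = 0$, which must be turned into statements about the pseudoinverses. So the first step is to record the cross-annihilation identities
\[
B C^\dagger = 0,\qquad C B^\dagger = 0 .
\]
These follow from standard range facts. $\operatorname{Range}(C^\dagger)=\operatorname{Range}(C^\top)$, and $BC^\top=0$ means $B$ vanishes on $\operatorname{Range}(C^\top)$, so $BC^\dagger=0$. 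Symmetrically, $CB^\top = (BC^\top)^\top = 0$ gives $CB^\dagger=0$. Alternatively, one can write $C^\dagger = C^\top (CC^\top)^\dagger$, from which $BC^\dagger = BC^\top(CC^\top)^\dagger=0$ is immediate.

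With these identities, compute the two products:
\[
MX = \begin{pmatrix} BB^\dagger & BC^\dagger\\ CB^\dagger & CC^\dagger\end{pmatrix} = \begin{pmatrix} BB^\dagger & 0\\ 0 & CC^\dagger\end{pmatrix},\qquad XM = B^\dagger B + C^\dagger C .
\]
$MX$ is block diagonal with symmetric blocks, hence symmetric. $XM$ is a sum of two orthogonal projectors, hence symmetric. That settles Penrose conditions (3) and (4).

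For (1), $MXM = \begin{pmatrix} BB^\dagger B\\ CC^\dagger C\end{pmatrix} = M$, using the block-diagonal form of $MX$. For (2), $XMX = \begin{pmatrix} B^\dagger BB^\dagger & C^\dagger CC^\dagger\end{pmatrix} = X$, again using the block-diagonal form of $MX$ (or equivalently that $B^\dagger B$ and $C^\dagger C$ project onto the mutually orthogonal subspaces $\operatorname{Range}(B^\top)$ and $\operatorname{Range}(C^\top)$).

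The main obstacle is modest: it is justifying the cross terms $BC^\dagger=0$ and $CB^\dagger=0$ cleanly without any rank assumptions. I would use the identity $C^\dagger = C^\top (CC^\top)^\dagger$, valid for any real matrix, and likewise for $B$. Everything else is routine block multiplication.
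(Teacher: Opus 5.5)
Your proposal is correct and follows the paper's route: verify the four Penrose conditions for the candidate $\begin{pmatrix} B^\dagger & C^\dagger\end{pmatrix}$ and invoke uniqueness of the pseudoinverse. You also supply the details the paper leaves as ``direct verification.'' In particular, you prove the cross-annihilation identities $BC^\dagger=0$ and $CB^\dagger=0$ via $C^\dagger=C^\top(CC^\top)^\dagger$, and these are what make $MX$ block diagonal.
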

\begin{proof} 
Recall that for any matrix $M$, the Moore–Penrose pseudoinverse $M^\dagger$ is defined as the unique matrix satisfying the following four conditions \cite[Section 5.5.2]{golub2013matrix}:
(1) $M M^\dagger M = M$;
(2) $M^\dagger M M^\dagger = M^\dagger$;  
(3) $(M M^\dagger)^\top = M M^\dagger$;  
(4) $(M^\dagger M)^\top = M^\dagger M$.
Substituting $M = \begin{pmatrix} B^\top & C^\top \end{pmatrix}^\top$ and directly verifying the above four identities confirms the desired result.
\end{proof}

The following result shows that the original linear system $Ax=b$ and the preconditioned system  $\hat{A} x = \hat{b}$ share the same unique least-squares solution. 
\begin{lemma}\label{lemma-same-solution-set}
	Let $\hat{A}$ and $\hat{b}$ be defined as in \eqref{eq-Atop-QR-like} and \eqref{main-pre-prob}, respectively. We have  
	\[
	A^\dagger b = \hat{A}^\dagger \hat{b}= A_{\mathcal{I}_p}^\dagger b_{\mathcal{I}_p} + (A_{\mathcal{I}_r}P)^\dagger \hat{b}_{\mathcal{I}_r} \
	\text{ and }\
	A^\dagger A = \hat{A}^\dagger \hat{A}=A_{\mathcal{I}_p}^\dagger A_{\mathcal{I}_p}+(A_{\mathcal{I}_r}P)^\dagger A_{\mathcal{I}_r}P.
	\]
\end{lemma}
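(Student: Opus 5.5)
The plan is to combine the factorization \eqref{eq-Atop-QR-like} with Lemma~\ref{prop-ortho}. First, since $A_{\mathcal{I}_p}(A_{\mathcal{I}_r}P)^\top=0$, Lemma~\ref{prop-ortho} applied with $B=A_{\mathcal{I}_p}$ and $C=A_{\mathcal{I}_r}P$ gives
\[
\hat{A}^\dagger=\begin{pmatrix} A_{\mathcal{I}_p}^\dagger & (A_{\mathcal{I}_r}P)^\dagger\end{pmatrix}.
\]
Multiplying this by $\hat{b}$ and by $\hat{A}$ then yields the second equality in each of the two claimed identities directly:
\[
\hat{A}^\dagger\hat{b}=A_{\mathcal{I}_p}^\dagger b_{\mathcal{I}_p}+(A_{\mathcal{I}_r}P)^\dagger\hat{b}_{\mathcal{I}_r},\qquad \hat{A}^\dagger\hat{A}=A_{\mathcal{I}_p}^\dagger A_{\mathcal{I}_p}+(A_{\mathcal{I}_r}P)^\dagger A_{\mathcal{I}_r}P .
\]

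Next I compare $A^\dagger$ with $\hat{A}^\dagger$. From \eqref{eq-Atop-QR-like} we have $A=L\hat{A}$, where $L$ is invertible (unit block lower triangular). Then $\operatorname{Range}(A^\top)=\operatorname{Range}(\hat{A}^\top L^\top)=\operatorname{Range}(\hat{A}^\top)$. Since $A^\dagger A$ and $\hat{A}^\dagger\hat{A}$ are the orthogonal projectors onto these row spaces, they coincide, which gives $A^\dagger A=\hat{A}^\dagger\hat{A}$.

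For the solution identity, I would first try to verify that $X:=\hat{A}^\dagger L^{-1}$ is the pseudoinverse of $A$. Conditions (1), (2) and (4) hold easily:
\begin{itemize}
\item $AXA=L\hat{A}\hat{A}^\dagger\hat{A}=A$;
\item $XAX=\hat{A}^\dagger\hat{A}\hat{A}^\dagger L^{-1}=X$;
\item $XA=\hat{A}^\dagger\hat{A}$ is symmetric.
\end{itemize}
Condition (3), however, asks that $AX=L\hat{A}\hat{A}^\dagger L^{-1}$ be symmetric, and this can fail in general. I expect this to be the main obstacle. The right statement is only $A^\dagger b=\hat{A}^\dagger\hat{b}$ for the particular right-hand side $b$, not $A^\dagger=\hat{A}^\dagger L^{-1}$.

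To get around this I would use the characterization of $A^\dagger b$ as the minimum-norm least-squares solution, i.e. the unique $x\in\operatorname{Range}(A^\top)$ with $A^\top A x=A^\top b$. Set $\hat{x}:=\hat{A}^\dagger\hat{b}$. It lies in $\operatorname{Range}(\hat{A}^\top)=\operatorname{Range}(A^\top)$ and satisfies the normal equations $\hat{A}^\top\hat{A}\hat{x}=\hat{A}^\top\hat{b}$. It remains to transfer these to the original normal equations $A^\top A\hat{x}=\hat{A}^\top L^\top L\hat{A}\hat{x}=\hat{A}^\top L^\top b$, and this is where the difficulty appears, since $L^\top L\neq I$. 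Two routes are available:
\begin{itemize}
\item If the system is consistent (which the solution-set discussion suggests, and which the paper's setting presumably assumes), then $b=Ax^*$ and $\hat{b}=\hat{A}x^*$, so $\hat{x}=\hat{A}^\dagger\hat{A}x^*=A^\dagger Ax^*=A^\dagger b$, and we are done.
\item In the inconsistent case, I would check whether $L$ maps $\operatorname{Range}(\hat{A})$ suitably. I suspect the identity genuinely needs $b\in\operatorname{Range}(A)$.
\end{itemize}
So my plan is to state and use the consistency assumption at that point, deriving $A^\dagger b=\hat{A}^\dagger\hat{b}$ from $A^\dagger A=\hat{A}^\dagger\hat{A}$ applied to a solution $x^*$.
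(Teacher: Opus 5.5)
Your proposal is correct and follows essentially the paper's route: Lemma~\ref{prop-ortho} gives $\hat A^\dagger$, the row spaces of $A$ and $\hat A$ coincide so $A^\dagger A=\hat A^\dagger\hat A$, and consistency yields the solution identity. The only difference is in the last step: the paper phrases it as ``the two systems have the same solution set, hence the same minimum-norm solution,'' while you derive it from $\hat A^\dagger\hat A x^*=A^\dagger A x^*$.

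Your suspicion about the inconsistent case is justified. Take $A=(1,1)^\top$ and $\mathcal{I}_p=\{1\}$. Then $P=0$ and $\hat A^\dagger\hat b=b_1$, whereas $A^\dagger b=(b_1+b_2)/2$. So $A^\dagger b=\hat A^\dagger\hat b$ genuinely requires $b\in\operatorname{Range}(A)$. The paper's proof also uses this implicitly, and it holds in every theorem where the lemma is applied.
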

\begin{proof} 
	By substituting $B=A_{\mathcal{I}_p}$ and $C=A_{\mathcal{I}_r}P$ in Lemma \ref{prop-ortho}, we obtain $\hat{A}^\dagger=\begin{pmatrix}
		A_{\mathcal{I}_p}^\dagger&(A_{\mathcal{I}_r}P)^\dagger
	\end{pmatrix}$ and $\hat A^\dagger \hat A
	= A_{\mathcal{I}_p}^\dagger A_{\mathcal{I}_p}+(A_{\mathcal{I}_r}P)^\dagger A_{\mathcal{I}_r}P$.
	Since the linear systems $Ax=b$ and $\hat A x=\hat b$ share the same solution set, their unique minimum-norm solutions are identical. Therefore, we derive
	\[
	A^\dagger b=\hat A^\dagger \hat b
	= A_{\mathcal{I}_p}^\dagger b_{\mathcal I_p}+(A_{\mathcal{I}_r}P)^\dagger \hat b_{\mathcal I_r}.
	\]
	Since $L$ is invertible, it holds that $\operatorname{Range}(A^\top)=\operatorname{Range}(\hat{A}^\top L^\top)=\operatorname{Range}(\hat{A}^\top)$. By the uniqueness of orthogonal projections, the projection operators onto $\operatorname{Range}(A^\top)$ and $\operatorname{Range}(\hat{A}^\top)$ coincide. Consequently,
	\[
	A^\dagger A=\hat{A}^\dagger \hat{A}= A_{\mathcal{I}_p}^\dagger A_{\mathcal{I}_p}+(A_{\mathcal{I}_r}P)^\dagger A_{\mathcal{I}_r}P.
	\]
	This completes the proof of this lemma.
\end{proof}

\begin{remark}\label{remark-SCRBK}
	The subspace constrained randomized block Kaczmarz (SCRBK) method in~\cite{lok2024subspace} adopts the following iteration scheme
	\begin{equation}\label{iter-SCRBK}
		\begin{aligned}
			x^{k+1}=x^k-A_{\mathcal{I}_p\cup\mathcal{J}_k}^\dagger\big(A_{\mathcal{I}_p\cup\mathcal{J}_k}x^k-b_{\mathcal{I}_p\cup\mathcal{J}_k}\big),
		\end{aligned}
	\end{equation}
	where $\mathcal{J}_k\subseteq\mathcal{I}_r$ and $|\mathcal{J}_k|\geq1$. From Lemma~\ref{lemma-same-solution-set}, we have
	\(
	A_{\mathcal{I}_p \cup \mathcal{J}_k}^\dagger A_{\mathcal{I}_p \cup \mathcal{J}_k} 
	= \hat{A}_{\mathcal{I}_p \cup \mathcal{J}_k}^\dagger \hat{A}_{\mathcal{I}_p \cup \mathcal{J}_k}\) and
	\(A_{\mathcal{I}_p \cup \mathcal{J}_k}^\dagger b_{\mathcal{I}_p \cup \mathcal{J}_k} 
	= \hat{A}_{\mathcal{I}_p \cup \mathcal{J}_k}^\dagger \hat{b}_{\mathcal{I}_p \cup \mathcal{J}_k}.
	\)
	Accordingly, the iteration \eqref{iter-SCRBK} can be equivalently reformulated as
	\[
	\begin{aligned}
		x^{k+1}
		&=x^k-A_{\mathcal{I}_p\cup\mathcal{J}_k}^\dagger\big(A_{\mathcal{I}_p\cup\mathcal{J}_k}x^k-b_{\mathcal{I}_p\cup\mathcal{J}_k}\big) \\
		&=x^k-\hat{A}_{\mathcal{I}_p\cup\mathcal{J}_k}^\dagger\big(\hat{A}_{\mathcal{I}_p\cup\mathcal{J}_k}x^k-\hat{b}_{\mathcal{I}_p\cup\mathcal{J}_k}\big) \\
		&=x^k-A_{\mathcal{I}_p}^\dagger\big(A_{\mathcal{I}_p}x^k-b_{\mathcal{I}_p}\big)-\big(A_{\mathcal{J}_k}P\big)^\dagger \big(A_{\mathcal{J}_k}Px^k-\hat{b}_{\mathcal{J}_k}\big),
	\end{aligned}
	\]
	where the last equality follows from Lemma \ref{prop-ortho} that $\hat{A}_{\mathcal{I}_p\cup\mathcal{J}_k}^\dagger=\big(A_{\mathcal{I}_p}^\dagger \ \ (A_{\mathcal{J}_k}P)^\dagger\big)$. This iteration scheme is consistent with that presented in \cite[Remark 3.9]{lok2024subspace}, which is originally derived under the full row rank assumption on $A_{\mathcal{I}_p\cup\mathcal{J}_k}$. In contrast, based on the QR-like factorization established in this subsection, we demonstrate that such a full row rank assumption is actually unnecessary.
\end{remark}

\subsection{Preliminaries on the probability space}

First, we state a basic assumption on the probability space $(\Omega, \mathcal{F}, \mathbf{P})$ used throughout this paper.
 
\begin{assumption}
	\label{assumption1}
	Let $(\Omega, \mathcal{F}, \mathbf{P})$ be a probability space where the sample space $\Omega$ is a set of sketching matrices, and let $S$ denote the random matrix drawn from it. We assume $\mathbb{E}[SS^\top]$ is positive definite.
\end{assumption} 
The following lemmas are crucial for our convergence analysis.
\begin{lemma}[\cite{Zeng2024adaptive}, Lemma 2.3]
	\label{lemma-meanwhile}
	Assume that the linear system $A_{\mathcal{I}_r}Px=\hat{b}_{\mathcal{I}_r}$ is consistent. Then for any matrix $S \in \mathbb{R}^{m_r\times q}$ and any vector $\tilde{x} \in \mathbb{R}^{n}$, it holds that $PA_{\mathcal{I}_r}^\top SS^\top(A_{\mathcal{I}_r}P\tilde{x}-\hat{b}_{\mathcal{I}_r}) \neq 0$ if and only if $S^\top(A_{\mathcal{I}_r}P\tilde{x}-\hat{b}_{\mathcal{I}_r}) \neq 0$. 
\end{lemma}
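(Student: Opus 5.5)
The statement is immediate once we use consistency to write the residual in terms of a solution. Set $M := A_{\mathcal{I}_r}P$ and $r := M\tilde{x}-\hat{b}_{\mathcal{I}_r}$. Note that $M^\top = PA_{\mathcal{I}_r}^\top$, because $P$ is symmetric; indeed $P$ is the orthogonal projector onto the null space of $A_{\mathcal{I}_p}$. So the claim reads
\[
M^\top SS^\top r\neq 0 \iff S^\top r\neq 0 .
\]

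The direction ``$\Rightarrow$'' is trivial. If $S^\top r=0$, then $M^\top S S^\top r=M^\top S\cdot 0=0$. Taking the contrapositive gives the implication.

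For ``$\Leftarrow$'' we argue by contrapositive: assume $M^\top SS^\top r=0$ and show $S^\top r=0$.

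1. By consistency there is $x^\ast$ with $Mx^\ast=\hat{b}_{\mathcal{I}_r}$. Hence $r=M(\tilde{x}-x^\ast)$, so $r\in\operatorname{Range}(M)$.
2. Take the inner product of $M^\top SS^\top r=0$ with $\tilde{x}-x^\ast$. This gives
\[
0=(\tilde{x}-x^\ast)^\top M^\top SS^\top r = r^\top SS^\top r=\|S^\top r\|_2^2 .
\]
3. Therefore $S^\top r=0$.

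The only step requiring care is recognizing that consistency places the residual in $\operatorname{Range}(M)$. This is what allows the quadratic-form identity in step 2, and nothing else is needed. One could equivalently say that $M^\top$ is injective on $\operatorname{Range}(M)$, but the inner-product argument avoids that detour. No results from earlier in the paper are needed beyond the definition and symmetry of $P$.
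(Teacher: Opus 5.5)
Your proof is correct. The paper does not prove this lemma itself (it imports it from \cite{Zeng2024adaptive}), but your key step is exactly the identity the paper uses in \eqref{eq-0514-2} with $x^\ast=A^\dagger b$: write $r=A_{\mathcal{I}_r}P(\tilde{x}-x^\ast)$ and pair $PA_{\mathcal{I}_r}^\top SS^\top r$ with $\tilde{x}-x^\ast$ to obtain $\|S^\top r\|_2^2$.

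Only your closing aside is slightly off. Injectivity of $M^\top=PA_{\mathcal{I}_r}^\top$ on $\operatorname{Range}(M)$ does not apply directly, because $SS^\top r$ need not lie in $\operatorname{Range}(M)$. The correct rephrasing is that $SS^\top r\in\operatorname{Null}(M^\top)=\operatorname{Range}(M)^\perp$ while $r\in\operatorname{Range}(M)$, so $r^\top SS^\top r=0$.
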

\begin{lemma}[\cite{Zeng2024adaptive}, Lemma 2.4]
	\label{xie-empty}
	Assume that Assumption \ref{assumption1} holds. Then for any $\tilde{x} \in \mathbb{R}^n$,
	$
	S^\top(A_{\mathcal{I}_r}P\tilde{x}-\hat{b}_{\mathcal{I}_r}) = 0 \text{ for all } S \in \Omega
	$ if and only if $
	A_{\mathcal{I}_r}P\tilde{x} = \hat{b}_{\mathcal{I}_r}.
	$
\end{lemma}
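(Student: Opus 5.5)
The plan is to show the two implications separately, working with the residual vector
\[
r := A_{\mathcal{I}_r}P\tilde{x}-\hat{b}_{\mathcal{I}_r}\in\mathbb{R}^{m_r}.
\]
Here the sketching matrices $S\in\Omega$ act on the reduced system, so $S\in\mathbb{R}^{m_r\times q}$ and $\mathbb{E}[SS^\top]$ is an $m_r\times m_r$ matrix. Note that, unlike Lemma~\ref{lemma-meanwhile}, this statement needs no consistency assumption, because we only compare $r$ with zero.

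The ``if'' direction is immediate. If $A_{\mathcal{I}_r}P\tilde{x}=\hat{b}_{\mathcal{I}_r}$, then $r=0$, so $S^\top r=0$ for every $S\in\Omega$.

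For the ``only if'' direction, suppose $S^\top r=0$ for all $S\in\Omega$. The key step is to convert this pointwise statement into a quadratic-form identity for the averaged matrix. For every $S\in\Omega$,
\[
r^\top SS^\top r=\|S^\top r\|_2^2=0.
\]
Since $r$ is deterministic (it does not depend on $S$), linearity of expectation gives
\[
r^\top\,\mathbb{E}[SS^\top]\,r=\mathbb{E}\big[\|S^\top r\|_2^2\big]=0.
\]
By Assumption~\ref{assumption1}, $\mathbb{E}[SS^\top]$ is positive definite, so $r^\top\mathbb{E}[SS^\top]r>0$ whenever $r\neq0$. Hence $r=0$, that is, $A_{\mathcal{I}_r}P\tilde{x}=\hat{b}_{\mathcal{I}_r}$.

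There is no real obstacle. The only points to note are that $\mathbb{E}[SS^\top]$ is implicitly finite (this is built into Assumption~\ref{assumption1}), and that the hypothesis ``for all $S\in\Omega$'' could even be weakened to ``$\mathbf{P}$-almost surely'' without changing the argument, since the expectation ignores null sets.
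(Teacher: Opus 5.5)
Your proof is correct and is essentially the argument the paper relies on: the ``if'' direction is immediate, and for the converse the identity $r^\top\mathbb{E}[SS^\top]r=\mathbb{E}[\|S^\top r\|_2^2]=0$, combined with the positive definiteness in Assumption~\ref{assumption1}, forces $r=0$. The paper cites this lemma from \cite{Zeng2024adaptive} without proof, but it uses this same quadratic-form argument, in the almost-sure form you mention, in the proof of Theorem~\ref{Thm-Convergence-SCRIM} to show that $\mathbf{P}(\bar S_k\in\mathcal Q_k\mid\mathcal B_k)>0$.
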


We summarize Lemma 2.3 in \cite{Lorenz2025Minimal} and Lemma 2.5 in \cite{Zeng2024adaptive} in the following lemma.
\begin{lemma}\label{lem-pd} 
	Suppose that Assumption~\ref{assumption1} holds and that $A_{\mathcal{I}_r} P \neq 0$.
	Then the matrices
	\begin{equation}
		\label{eq-H}
		\begin{aligned}
			\bar{H}:=\mathbb{E}\left[\frac{SS^\top}{\|S\|^2_2}\right] \ \ \text{and}\ \ H:=\mathbb{E}\left[\frac{SS^\top}{\|S^\top A_{\mathcal{I}_r}P\|^2_2}\right]
		\end{aligned}
	\end{equation}
	are well-defined and positive definite, here we define $\frac{0}{0}=0$.
\end{lemma}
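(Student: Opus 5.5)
The statement to prove is Lemma~\ref{lem-pd}: $\bar H$ and $H$ are well defined and positive definite. The plan has three steps: show the scalar weights are finite and positive almost surely; obtain a lower bound for $\bar H$ by a pointwise argument; then reduce $H$ to $\bar H$.

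\emph{Well-definedness and scalar bounds.} By the convention $\frac{0}{0}=0$, both integrands vanish when $S=0$.
\begin{itemize}
\item For $\bar H$: whenever $S\neq 0$, the matrix $SS^\top/\|S\|_2^2$ is positive semidefinite with spectral norm exactly $1$. So $\bar H$ is the expectation of a bounded random matrix, hence finite and positive semidefinite.
\item For $H$: the danger is $S\neq 0$ but $S^\top A_{\mathcal I_r}P=0$. The $\frac00$ convention only covers a vanishing numerator, so here one must either exclude such $S$ from $\Omega$ or interpret the ratio as $0$. Following \cite{Zeng2024adaptive}, I would adopt the convention that the integrand is set to zero whenever the denominator vanishes, and remark on this. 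On the event $S^\top A_{\mathcal I_r}P\neq 0$, use $\|S^\top A_{\mathcal I_r}P\|_2\le \|S\|_2\|A_{\mathcal I_r}P\|_2$ to get
\[
\frac{SS^\top}{\|S^\top A_{\mathcal I_r}P\|_2^2}\succeq \frac{1}{\|A_{\mathcal I_r}P\|_2^2}\,\frac{SS^\top}{\|S\|_2^2}.
\]
This bound is finite because $A_{\mathcal I_r}P\neq 0$. Finiteness of $H$ from above needs a separate upper bound; in the finite or discrete sampling setting the paper has in mind, it follows from finitely many nonzero terms.
\end{itemize}

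\emph{Positive definiteness of $\bar H$.} Fix a unit vector $v$ and suppose $v^\top\bar H v=\mathbb E[\|S^\top v\|_2^2/\|S\|_2^2]=0$. The integrand is nonnegative, so $S^\top v=0$ almost surely. Then $v^\top\mathbb E[SS^\top]v=\mathbb E\|S^\top v\|_2^2=0$, which contradicts Assumption~\ref{assumption1}. Hence $\bar H\succ 0$.

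\emph{Positive definiteness of $H$.} Using the pointwise bound above, I would like $H\succeq \bar H/\|A_{\mathcal I_r}P\|_2^2\succ0$. This fails on the event where $S\ne0$ but $S^\top A_{\mathcal I_r}P=0$: there the left integrand is $0$ under the convention, while the right integrand is not. This is the main obstacle.

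My first approach is to mimic the $\bar H$ argument directly. If $v^\top Hv=0$, then $S^\top v=0$ almost surely on the event $\{S^\top A_{\mathcal I_r}P\neq0\}$. To conclude $v=0$, I also need $S^\top v=0$ on the complementary event $\{S^\top A_{\mathcal I_r}P=0,\ S\ne0\}$. That holds only if $A_{\mathcal I_r}P$ has full row rank or that event is null.

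Where this fails, I would fall back on citing \cite[Lemma 2.3]{Lorenz2025Minimal} and \cite[Lemma 2.5]{Zeng2024adaptive} for the precise statement. Since the lemma is explicitly a summary of those results, that is acceptable, and I would note that what the later analysis actually needs is positivity on $\operatorname{Range}(A_{\mathcal I_r}P)$, which the argument above does deliver.
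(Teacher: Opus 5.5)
Your proof that $\bar H$ is well defined and positive definite is correct and complete. For $H$, the paper gives no argument of its own beyond citing \cite[Lemma 2.3]{Lorenz2025Minimal} and \cite[Lemma 2.5]{Zeng2024adaptive}. The only reasoning it writes out, in the proof of Lemma~\ref{col-surrogate}, is your first route, $H\succeq \bar H/\|A_{\mathcal I_r}P\|_2^2$, applied pointwise without comment on the event $\{S\neq 0,\ S^\top A_{\mathcal I_r}P=0\}$.

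The obstruction you found is genuine, and it is a defect of the statement rather than of your reasoning: without an extra hypothesis the lemma is false.
\begin{itemize}
\item \emph{Singular or undefined $H$.} Take $m_r=2$ and $\Omega=\{e_1,e_2\}$ with probability $\tfrac12$ each. Let the second row of $A_{\mathcal I_r}$ lie in $\operatorname{Range}(A_{\mathcal I_p}^\top)$ (e.g.\ it repeats a row of $A_{\mathcal I_p}$), while the first row $a^\top$ does not. Then $\mathbb E[SS^\top]=\tfrac12 I$ and $A_{\mathcal I_r}P\neq 0$. But at $S=e_2$ the integrand is $e_2e_2^\top/0$, which the convention $\tfrac00=0$ does not cover. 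So $H$ is either undefined or, under your zero convention, equal to $e_1e_1^\top/(2\|Pa\|_2^2)$, which is singular.
\item \emph{Infinite $H$ even when that event is null.} Take $q=1$, $S=s$ standard Gaussian in $\mathbb R^{m_r}$ with $m_r\ge 2$, and $A_{\mathcal I_r}P=\sigma uw^\top$ of rank one with unit $u,w$. Then $\|S^\top A_{\mathcal I_r}P\|_2=\sigma|u^\top s|$. For a unit $v\perp u$, the variables $u^\top s$ and $v^\top s$ are independent standard normals and $\mathbb E[g^{-2}]=+\infty$ for $g$ standard normal, so $v^\top Hv=\sigma^{-2}\,\mathbb E[(v^\top s)^2]\,\mathbb E[(u^\top s)^{-2}]=+\infty$.
\end{itemize}
Consequently, your ``finitely many nonzero terms'' remark covers only finite $\Omega$. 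Falling back on the citations also closes nothing, because no reference can prove the statement as written.

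The right move is to make the repair explicit rather than defer.
\begin{itemize}
\item \emph{With added hypotheses.} Assume $\mathbf P(S\neq0,\ S^\top A_{\mathcal I_r}P=0)=0$ and that $H$ is finite. Finiteness holds, for example, if $\Omega$ is finite, or if $A_{\mathcal I_r}P$ has full row rank, since then $\|S\|_2\le\|S^\top A_{\mathcal I_r}P\|_2/\sigma_{\min}(A_{\mathcal I_r}P)$ bounds the integrand. Under these hypotheses your pointwise comparison holds almost surely, and together with $\bar H\succ0$ it gives a complete proof.
\item \emph{Without them.} Your closing observation is the correct statement, and it deserves a proof rather than a remark; the proof is short. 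For $v=A_{\mathcal I_r}Py$, the integrand $\|S^\top v\|_2^2/\|S^\top A_{\mathcal I_r}P\|_2^2$ is at most $\|y\|_2^2$. Moreover, $S^\top v=0$ automatically on the bad event. Hence the comparison with $\bar H$ holds on $\operatorname{Range}(A_{\mathcal I_r}P)$, giving $0<v^\top Hv<\infty$ for every nonzero such $v$.
\end{itemize}
In the convergence analysis $H$ enters only through $PA_{\mathcal I_r}^\top HA_{\mathcal I_r}P$. So this restricted version is all that Theorems~\ref{Thm-Convergence-SCRIM} and~\ref{Thm-Convergence-IS-Krylov-SC} and Lemma~\ref{col-surrogate} actually use.
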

 


\section{Randomized iterative methods with subspace-constrained preconditioning}
\label{sec-3}

In this section, we develop subspace-constrained randomized iterative methods (SCRIM) for solving the linear system \eqref{main-prob}. 
The SCRIM method can be viewed as a projected SGD method \eqref{iteration-pro-SGD} for solving the following constrained stochastic optimization problem reformulated from the preconditioned linear system \eqref{main-pre-prob}
\begin{equation}\label{main-sto-prob}
	\begin{aligned}
		\min_{x\in\mathbb{R}^n}&\quad f(x)=\mathbb{E}[f_{S}(x)]\\
		\text{subject to}&\quad x\in \mathcal{X}_p:=\{x\in\mathbb{R}^n \mid A_{\mathcal{I}_p}x=b_{\mathcal{I}_p}\},
	\end{aligned}
\end{equation}
where $f_{S}(x):=\frac{1}{2}\lVert S^\top(A_{\mathcal{I}_r}Px-\hat{b}_{\mathcal{I}_r})\rVert_2^2$ with $S$ being a random sketching matrix drawn from $(\Omega,\mathcal{F},\mathbf{P})$, and $\hat{b}_{\mathcal{I}_r}=b_{\mathcal{I}_r} - A_{\mathcal{I}_r} A_{\mathcal{I}_p}^\dagger b_{\mathcal{I}_p}$. The resulting projected SGD iteration scheme reads
\begin{equation}\label{eq-PSGD}
	\left\{
	\begin{array}{ll}
		y^{k+1}=x^k-\alpha_kPA_{\mathcal{I}_r}^\top S_{k}S_{k}^\top(A_{\mathcal{I}_r}Px^k-\hat{b}_{\mathcal{I}_r}),\\
		x^{k+1}=\Pi_{\mathcal{X}_p}(y^{k+1})=y^{k+1}-A_{\mathcal{I}_p}^\dagger \big(A_{\mathcal{I}_p}y^{k+1}-b_{\mathcal{I}_p}\big),
	\end{array}
	\right.
\end{equation}
where $\alpha_k$ denotes the stepsize and $\Pi_{\mathcal{X}_p}(\cdot)$ represents the orthogonal projection onto the feasible set $\mathcal{X}_p$.
 Substituting the intermediate update $y^{k+1}$ into the projection step and note that $A_{\mathcal{I}_p}P = 0$, yields the compact iteration
\begin{equation}\label{eq-bas-xk+1}
	\begin{aligned}
		x^{k+1}=x^k-A_{\mathcal{I}_p}^\dagger\big(A_{\mathcal{I}_p}x^k-b_{\mathcal{I}_p}\big)-\alpha_k(A_{\mathcal{I}_r}P)^\top S_{k}S_{k}^\top\big(A_{\mathcal{I}_r}Px^k-\hat{b}_{\mathcal{I}_r}\big).
	\end{aligned}
\end{equation}
If we choose the initial point $x^0 = A_{\mathcal{I}_p}^\dagger b_{\mathcal{I}_p} \in \mathcal{X}_p$, then all subsequent iterates satisfy $x^k \in \mathcal{X}_p$. Under this valid initialization, the update \eqref{eq-bas-xk+1} simplifies to
\begin{equation}\label{eq-bas-xk+1-2}
	x^{k+1}=x^k-\alpha_kPA_{\mathcal{I}_r}^\top S_{k}S_{k}^\top\left(A_{\mathcal{I}_r}Px^k-\hat{b}_{\mathcal{I}_r}\right),
\end{equation}
which can be viewed as the randomized iterative method \eqref{RIM-iteration} for solving $A_{\mathcal{I}_r}Px=\hat{b}_{\mathcal{I}_r}$. 

We further show that the auxiliary reformulation involving $\hat{b}_{\mathcal{I}_r}$ in \eqref{eq-bas-xk+1-2} is unnecessary.  For any iterate $x^k$, we have $Px^k=x^k-A_{\mathcal{I}_p}^\dagger A_{\mathcal{I}_p}x^k=x^k-A_{\mathcal{I}_p}^\dagger b_{\mathcal{I}_p}$, which yields  $A_{\mathcal{I}_r}Px^k-\hat{b}_{\mathcal{I}_r}=A_{\mathcal{I}_r}x^k-A_{\mathcal{I}_r}A_{\mathcal{I}_p}^\dagger b_{\mathcal{I}_p}-\hat{b}_{\mathcal{I}_r}=A_{\mathcal{I}_r}x^k-b_{\mathcal{I}_r}$. 
Based on this equality,   \eqref{eq-bas-xk+1-2} can be further simplified as
\begin{equation}\label{iter-sim}
	x^{k+1}=x^k-\alpha_kPA_{\mathcal{I}_r}^\top S_{k}S_{k}^\top(A_{\mathcal{I}_r}x^k-b_{\mathcal{I}_r}),\ k\geq0.
\end{equation}
This iteration scheme can be viewed as a preconditioned SGD \eqref{iteration-pre-SGD} (or preconditioned randomized iterative method) with the preconditioner matrix $P$ for the linear system $A_{\mathcal{I}_r}x=b_{\mathcal{I}_r}$. 
 
Note that when $S_{k}^\top(A_{\mathcal{I}_r}x^k - b_{\mathcal{I}_r}) = 0$, the iteration scheme \eqref{iter-sim} reduces to $x^{k+1}=x^k$, which corresponds to a \emph{null step}. To avoid such null steps, we construct the randomized sketching matrix $S_k$ to satisfy $S_{k}^\top(A_{\mathcal{I}_r}x^k - b_{\mathcal{I}_r}) \neq 0$. 
In addition, it follows from Lemma \ref{lemma-meanwhile} that $ PA_{\mathcal{I}_r}^\top S_{k}S_{k}^\top(A_{\mathcal{I}_r}Px^k-\hat{b}_{\mathcal{I}_r})\neq0$ holds if and only if $ S_{k}^\top(A_{\mathcal{I}_r}Px^k-\hat{b}_{\mathcal{I}_r})\neq0$.
Combining this result with the identity $A_{\mathcal{I}_r}Px^k-\hat{b}_{\mathcal{I}_r}=A_{\mathcal{I}_r}x^k-b_{\mathcal{I}_r}$, we conclude that  $ PA_{\mathcal{I}_r}^\top S_{k}S_{k}^\top(A_{\mathcal{I}_r}x^k-b_{\mathcal{I}_r})\neq0$ if and only if $ S_{k}^\top(A_{\mathcal{I}_r}x^k-b_{\mathcal{I}_r})\neq0$. 
Hence,  
\begin{equation}\label{eq-Ladap}
	\begin{aligned}
		L_{\text{adap}}^k=\frac{\lVert S_{k}^\top(A_{\mathcal{I}_r}x^k-b_{\mathcal{I}_r})\rVert_2^2}{\lVert PA_{\mathcal{I}_r}^\top S_{k}S_{k}^\top(A_{\mathcal{I}_r}x^k-b_{\mathcal{I}_r})\rVert_2^2}
	\end{aligned}
\end{equation}
is well-defined throughout the iteration process.
Now we are ready to state the  proposed SCRIM, which is formally described in Algorithm \ref{Algo-1}.

\begin{algorithm}[htpb]
	\caption{Subspace-constrained randomized iterative method (SCRIM)}
	\label{Algo-1}
	\begin{algorithmic}
		\Require $A\in\mathbb{R}^{m\times n},b\in\mathbb{R}^m$, fixed row indices $\mathcal{I}_p\subseteq[m],\mathcal{I}_r=[m]\backslash \mathcal{I}_p$, probability space ${(\Omega,\mathcal{F},\mathbf{P})},\zeta\in(0,2)$, and $k=0$. 
		\begin{enumerate}
			\item[1:] Compute $A_{\mathcal{I}_p}^\dagger$ and set $x^0=A_{\mathcal{I}_p}^\dagger b_{\mathcal{I}_p}$.
			\item [2:] Randomly select a sampling matrix $S_{k}\in\Omega$ until $ S_{k}^\top(A_{\mathcal{I}_r}x^k-b_{\mathcal{I}_r})\neq0$. 
			\item [3:] Compute
			\begin{equation}
				\left\{
				\begin{array}{l}
					g^k=-A_{\mathcal{I}_r}^\top S_{k}S_{k}^\top(A_{\mathcal{I}_r}x^{k}-b_{\mathcal{I}_r}),\\
					\bar{g}^k=A_{\mathcal{I}_p}g^k,\\
					d^k=g^k-A_{\mathcal{I}_p}^\dagger\bar{g}^k.
				\end{array}\right.
			\end{equation}
			\item [4:] Compute the parameter $L_{\text{adap}}^k $ in \eqref{eq-Ladap} and set $ \alpha_k=(2-\zeta)L_{\text{adap}}^k$.
			\item [5:] Update $x^{k+1}=x^k+\alpha_kd^k$.
			\item [6:] If the stopping rule is satisfied stop and go to output. Otherwise, set $k=k+1$ and return Step 2.
		\end{enumerate}
		\Ensure
		The approximate solution.
	\end{algorithmic}
\end{algorithm}


\begin{remark}\label{remark-SCRK1}
	When the sampling space satisfies $\Omega = \{e_i\}_{i=1}^{m_r}$, with $e_{i_k}$ being sampled with probability 
	\(
	\frac{\|Pa_i\|_2^2}{\|A_{\mathcal{I}_r} P\|_F^2},  i \in \mathcal{I}_r.
	\)
	In this case, Algorithm~\ref{Algo-1} with $\zeta = 1$ reduces to
	\[
	x^{k+1} = x^k - \frac{\langle a_{i_k}, x^k\rangle - b_{i_k}}{\|P a_{i_k} \|_2^2} P a_{i_k},
	\]
	which exactly coincides with the SCRK method proposed in \cite{lok2024subspace}. When $\mathcal{I}_p = \emptyset$, Algorithm~\ref{Algo-1} reduces to 
	\[
	x^{k+1} = x^k - \alpha_k A^\top S_k S_k^\top (A x^k - b),
	\]
	which exactly recovers the randomized iterative method in \eqref{RIM-iteration}.
\end{remark}

\subsection{Convergence analysis}\label{Section 3.1}

For convenience, we define
\begin{equation}\label{eq-rho}
	\rho:=1-
	\zeta(2-\zeta)\sigma^2_{\min}(H^{\frac{1}{2}}A_{\mathcal{I}_r}P),
\end{equation}
where $H$ is given by \eqref{eq-H}. We have the following convergence result for Algorithm \ref{Algo-1}.

%
\begin{theorem}\label{Thm-Convergence-SCRIM}
	Suppose that the linear system $Ax=b$ is consistent and that the probability space $(\Omega,\mathcal{F},\mathbf{P})$ satisfies Assumption {\rm\ref{assumption1}}. Let $\{x^k\}_{k\geq0}$ be the iteration sequence generated by Algorithm {\rm\ref{Algo-1}}. If $Ax^{k}=b$, then $x^{k}=A^\dagger b$. Otherwise, 
	\begin{equation}\label{Convergence-SCRIM}
		\begin{aligned}
			\mathbb{E}[\lVert x^{k+1}-A^\dagger b\rVert_2^2]
			&\leq\rho^{k+1}\lVert x^0-A^\dagger b\rVert_2^2,
		\end{aligned}
	\end{equation}
	where $\rho$ is given by \eqref{eq-rho}.
\end{theorem}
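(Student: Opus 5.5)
We work entirely with the reduced iteration \eqref{iter-sim}, equivalently \eqref{eq-bas-xk+1-2}, and compare each iterate with $x^*:=A^\dagger b$.

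\emph{Step 1: invariants.} We first show by induction that $x^k\in x^0+\operatorname{Range}(PA_{\mathcal{I}_r}^\top)$ and that $A_{\mathcal{I}_p}x^k=b_{\mathcal{I}_p}$. Both follow because $x^0=A_{\mathcal{I}_p}^\dagger b_{\mathcal{I}_p}$ and every update direction lies in $\operatorname{Range}(P A_{\mathcal{I}_r}^\top)\subseteq\operatorname{Null}(A_{\mathcal{I}_p})$.

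Lemma \ref{lemma-same-solution-set} gives $x^*=A_{\mathcal{I}_p}^\dagger b_{\mathcal{I}_p}+(A_{\mathcal{I}_r}P)^\dagger\hat b_{\mathcal{I}_r}$, so
\[
e^0:=x^0-x^*=-(A_{\mathcal{I}_r}P)^\dagger\hat b_{\mathcal{I}_r}\in\operatorname{Range}\big((A_{\mathcal{I}_r}P)^\top\big).
\]
Hence every error $e^k:=x^k-x^*$ lies in $\operatorname{Range}((A_{\mathcal{I}_r}P)^\top)$.

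Consistency of $Ax=b$ together with Lemma \ref{lemma-same-solution-set} gives $A_{\mathcal{I}_r}Px^*=\hat b_{\mathcal{I}_r}$. Therefore the residual $A_{\mathcal{I}_r}x^k-b_{\mathcal{I}_r}=A_{\mathcal{I}_r}Pe^k$.

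For the first claim: if $Ax^k=b$, then $A_{\mathcal{I}_r}Pe^k=0$. Since $e^k\in\operatorname{Range}((A_{\mathcal{I}_r}P)^\top)$, this forces $e^k=0$, i.e. $x^k=A^\dagger b$.

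\emph{Step 2: one-step decrease.} Write $M:=A_{\mathcal{I}_r}P$ and $u:=S_k^\top Me^k$, which is nonzero by the sampling rule. The update is $e^{k+1}=e^k-\alpha_kM^\top S_ku$. Expanding the square and using $\alpha_k=(2-\zeta)\|u\|_2^2/\|M^\top S_ku\|_2^2$ gives
\[
\|e^{k+1}\|_2^2=\|e^k\|_2^2-2\alpha_k\|u\|_2^2+\alpha_k^2\|M^\top S_ku\|_2^2=\|e^k\|_2^2-\zeta(2-\zeta)\frac{\|u\|_2^4}{\|M^\top S_ku\|_2^2}.
\]
We then bound $\|M^\top S_k u\|_2\le\|S_k^\top M\|_2\|u\|_2$, which yields
\[
\|e^{k+1}\|_2^2\le\|e^k\|_2^2-\zeta(2-\zeta)\frac{\|S_k^\top Me^k\|_2^2}{\|S_k^\top M\|_2^2}.
\]

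\emph{Step 3: conditional expectation.} This is the step that needs care, since $S_k$ is drawn from $\mathbf{P}$ conditioned on $S_k^\top Me^k\neq0$. Samples with $S^\top Me^k=0$ contribute zero to $\mathbb{E}\big[\|S^\top Me^k\|_2^2/\|S^\top M\|_2^2\big]$, using the convention $0/0=0$. Conditioning on the complementary event therefore divides the expectation by its probability, which is at most $1$, and so can only enlarge the decrease.

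Hence
\[
\mathbb{E}_k\|e^{k+1}\|_2^2\le\|e^k\|_2^2-\zeta(2-\zeta)\,(e^k)^\top M^\top HMe^k.
\]
Since $e^k\in\operatorname{Range}(M^\top)=\operatorname{Range}((H^{1/2}M)^\top)$ and $H\succ0$ by Lemma \ref{lem-pd}, we have $(e^k)^\top M^\top HMe^k\ge\sigma_{\min}^2(H^{1/2}M)\|e^k\|_2^2$. This gives $\mathbb{E}_k\|e^{k+1}\|_2^2\le\rho\|e^k\|_2^2$.

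If some iterate is already exact, the bound holds trivially from then on. Taking full expectations and inducting yields \eqref{Convergence-SCRIM}.

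The degenerate case $M=0$ means $x^0$ already solves the system, so the estimate is immediate. The main obstacle is the justification in Step 3 of the conditioned sampling and of the range argument that makes $\sigma_{\min}$, the smallest nonzero singular value, applicable.
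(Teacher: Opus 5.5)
Your proposal is correct and follows the paper's proof essentially step for step. It uses the same invariant $x^k-A^\dagger b\in\operatorname{Range}(PA_{\mathcal{I}_r}^\top)$, the same exact one-step identity with the bound $\|M^\top S_k u\|_2\le\|S_k^\top M\|_2\|u\|_2$, and the same device of writing the conditioned expectation as an indicator-weighted expectation divided by a probability at most one; your argument for the first claim via $\operatorname{Null}(M)\cap\operatorname{Range}(M^\top)=\{0\}$ is a cleaner phrasing of the paper's.

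The one detail you leave implicit is that $\mathbf{P}(S^\top Me^k\neq0)>0$, which is needed for the conditioning to make sense. The paper derives it from Assumption~\ref{assumption1} via $\mathbb{E}\|S^\top Me^k\|_2^2=(Me^k)^\top\mathbb{E}[SS^\top]Me^k>0$. It also follows from your own identity $\mathbb{E}[\|S^\top Me^k\|_2^2/\|S^\top M\|_2^2]=(e^k)^\top M^\top HMe^k>0$.
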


\begin{remark}\label{remark-SCRK}
Suppose that we use the same probability space as in Remark~\ref{remark-SCRK1}. Then  we have $H = \frac{1}{\|A_{\mathcal{I}_r} P\|_F^2} I_{m_r}$ and  Theorem~\ref{Thm-Convergence-SCRIM} implies
\[
\mathbb{E}\bigl[\|x^{k} - A^\dagger b\|_2^2\bigr] \leq \left( 1 - \frac{\sigma^2_{\min}(A_{\mathcal{I}_r} P)}{\|A_{\mathcal{I}_r} P\|^2_F}\right)^{k} \|x^0 - A^\dagger b\|_2^2.
\]
This result recovers the convergence rate derived in \cite[Theorem 1.1]{lok2024subspace} for the SCRK method. We note that the analysis in \cite[Theorem 1.1]{lok2024subspace} requires the matrix $A$ to have full column rank. However, our convergence analysis generalizes this result to arbitrary matrices, including rank-deficient cases, and guarantees convergence to the least-norm solution $A^\dagger b$. This generalization is enabled by the proposed QR-like factorization for constructing the preconditioned problem \eqref{main-pre-prob}.
\end{remark}
 
Next, we show that the convergence factor in Theorem~\ref{Thm-Convergence-SCRIM} is at least as tight as that of the randomized iterative method  in \eqref{RIM-iteration} (i.e., $\mathcal{I}_p = \emptyset$). To facilitate a fair comparison, we impose a unified formulation for the randomized sketching matrix. Specifically, let $\tilde{S} \in \mathbb{R}^{m \times q}$ be drawn from the probability space $(\tilde{\Omega}, \tilde{\mathcal{F}}, \tilde{\mathbf{P}})$. For convenience, we assume $\mathcal{I}_p = \{1, \ldots, m_p\}$, such that $\tilde{S}$ admits the following block row partition
\begin{equation}\label{eq-S}
	\tilde{S} = \begin{pmatrix} \tilde{S}_{\mathcal{I}_p} \\ S \end{pmatrix}, \quad \tilde{S}_{\mathcal{I}_p} \in \mathbb{R}^{m_p \times q},\quad  S \in \mathbb{R}^{m_r \times q}, \quad m = m_p + m_r.
\end{equation}
In this setting, $S$ corresponds to the randomized sketching matrix employed in Algorithm~\ref{Algo-1}.
 Now the randomized iterative method in \eqref{RIM-iteration} exhibits the following convergence property:
 \[
 \mathbb{E}\left[\lVert x^{k} - A^\dagger b \rVert_2^2\right] \leq \tilde{\rho}^{k} \lVert x^0 - A^\dagger b \rVert_2^2,
 \]
 where
 \begin{equation}\label{eq-tH-trho}
 	\tilde{H} := \mathbb{E} \left[ \frac{\tilde{S} \tilde{S}^\top}{\lVert \tilde{S}^\top A \rVert_2^2} \right] \quad \text{and} \quad
 	\tilde{\rho} = 1 - \zeta(2-\zeta) \sigma_{\min}^2(\tilde{H}^{1/2} A).
 \end{equation}
  The following result verifies that Algorithm \ref{Algo-1} achieves a tighter convergence factor than the randomized iterative method.  
  \begin{prop}\label{thm-con-rate-factor}
 	Let \(\tilde S\) and \(S\) denote the randomized sketching matrices defined in \eqref{eq-S}. Suppose that
 	\((\tilde{\Omega},\tilde{\mathcal F},\tilde{\mathbf P})\) satisfies
 	Assumption~\ref{assumption1}. 
 	Define
 	\(T:\tilde\Omega\to\Omega\) by \(T(\tilde S)=S\), and assume that 
 	\(
 	\tilde{\mathbf P}(T^{-1}(B))=\mathbf P(B)\) for all \(B\in\mathcal F\), 
 	where \(\mathbf P\) is the probability measure of \(S\) used in Algorithm~\ref{Algo-1}.
 	Then
 	\[
 	\rho\leq\tilde\rho,
 	\]
 	where $\rho$ and $\tilde{\rho}$ refer to the convergence factors defined in \eqref{eq-rho} and \eqref{eq-tH-trho}, respectively.
 \end{prop}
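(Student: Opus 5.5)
The plan is to compare the two quantities $\sigma^2_{\min}(H^{1/2}A_{\mathcal I_r}P)$ and $\sigma^2_{\min}(\tilde H^{1/2}A)$ through their variational (Rayleigh quotient) characterizations over the relevant row spaces. Since $\zeta(2-\zeta)>0$, the claim $\rho\le\tilde\rho$ is equivalent to
\[
\sigma^2_{\min}(H^{1/2}A_{\mathcal I_r}P)\ \ge\ \sigma^2_{\min}(\tilde H^{1/2}A).
\]
By Lemma~\ref{lem-pd} and Assumption~\ref{assumption1}, $H$ and $\tilde H$ are positive definite. Hence $\operatorname{Range}((H^{1/2}A_{\mathcal I_r}P)^\top)=\operatorname{Range}(PA_{\mathcal I_r}^\top)$ and $\operatorname{Range}((\tilde H^{1/2}A)^\top)=\operatorname{Range}(A^\top)$. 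This gives the two characterizations
\[
\sigma^2_{\min}(H^{1/2}A_{\mathcal I_r}P)=\min_{x\in \operatorname{Range}(PA_{\mathcal I_r}^\top),\,\|x\|_2=1} x^\top PA_{\mathcal I_r}^\top H A_{\mathcal I_r}Px
\]
and
\[
\sigma^2_{\min}(\tilde H^{1/2}A)=\min_{x\in \operatorname{Range}(A^\top),\,\|x\|_2=1} x^\top A^\top \tilde H Ax.
\]
The degenerate case $A_{\mathcal I_r}P=0$ should be set aside or handled by convention.

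The first step is to compare the feasible sets. From the QR-like factorization \eqref{eq-Atop-QR-like}, we have $\operatorname{Range}(A^\top)=\operatorname{Range}(\hat A^\top)=\operatorname{Range}(A_{\mathcal I_p}^\top)\oplus\operatorname{Range}(PA_{\mathcal I_r}^\top)$. Therefore $\operatorname{Range}(PA_{\mathcal I_r}^\top)\subseteq\operatorname{Range}(A^\top)$. It then suffices to show the pointwise inequality
\[
x^\top A^\top\tilde H A x\ \le\ x^\top PA_{\mathcal I_r}^\top HA_{\mathcal I_r}Px \quad\text{for every } x\in\operatorname{Range}(PA_{\mathcal I_r}^\top).
\]
Once this holds, taking minima gives
\[
\min_{\text{small set}}(\text{right-hand side})\ \ge\ \min_{\text{small set}}(\text{left-hand side})\ \ge\ \min_{\text{large set}}(\text{left-hand side}),
\]
which is the desired comparison.

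The second step establishes the pointwise inequality, and this is the heart of the proof. Take $x\in\operatorname{Range}(P)$. Then $Px=x$, and $A_{\mathcal I_p}x=0$ because $A_{\mathcal I_p}P=0$. By the partition \eqref{eq-S},
\[
\tilde S^\top Ax=\tilde S_{\mathcal I_p}^\top A_{\mathcal I_p}x+S^\top A_{\mathcal I_r}x=S^\top A_{\mathcal I_r}Px.
\]
By the same computation, $S^\top A_{\mathcal I_r}P=\tilde S^\top AP$. Since $\|P\|_2\le1$, this yields $\|S^\top A_{\mathcal I_r}P\|_2\le\|\tilde S^\top A\|_2$. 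It follows that, almost surely,
\[
\frac{\|\tilde S^\top Ax\|_2^2}{\|\tilde S^\top A\|_2^2}\ \le\ \frac{\|S^\top A_{\mathcal I_r}Px\|_2^2}{\|S^\top A_{\mathcal I_r}P\|_2^2}.
\]
If $S^\top A_{\mathcal I_r}P=0$, both numerators vanish, and the convention $0/0=0$ keeps the inequality valid. Taking $\tilde{\mathbf P}$-expectations, the left side is $x^\top A^\top\tilde HAx$.

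The right side is a function of $T(\tilde S)=S$ alone. By the pushforward hypothesis $\tilde{\mathbf P}\circ T^{-1}=\mathbf P$, its $\tilde{\mathbf P}$-expectation equals the $\mathbf P$-expectation, namely $x^\top PA_{\mathcal I_r}^\top HA_{\mathcal I_r}Px$.

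The main points needing care are this change-of-measure step and the treatment of the $0/0$ convention. Both are routine once stated explicitly, so I expect no serious obstacle beyond a clean bookkeeping of the range inclusion.
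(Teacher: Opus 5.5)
Your proposal is correct and follows essentially the same route as the paper: the variational characterization over $\operatorname{Range}(PA_{\mathcal I_r}^\top)\subseteq\operatorname{Range}(A^\top)$, the identity $\tilde S^\top AP=S^\top A_{\mathcal I_r}P$ together with $\|P\|_2\le 1$ to compare integrands pointwise, and the pushforward (change-of-variables) hypothesis to pass between $\tilde{\mathbf P}$ and $\mathbf P$; only the order of these steps differs. One small point needs fixing: Assumption~\ref{assumption1} is hypothesized only for $\tilde\Omega$, so before invoking Lemma~\ref{lem-pd} for $H$ you should note, as the paper does, that by the pushforward $\mathbb{E}[SS^\top]$ is the trailing principal block of $\mathbb{E}[\tilde S\tilde S^\top]$ and is therefore positive definite.
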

  

 \subsection{Selection strategies for \(\mathcal I_p\)}\label{sec-3.2}
Ideally, the index set $\mathcal{I}_p$ is chosen to minimize the convergence factor $\rho$, which is equivalent to maximizing $\sigma_{\min}(H^{\frac{1}{2}}A_{\mathcal{I}_r}P)$. However, since both $P$ and $H$ depend on $\mathcal{I}_p$, maximizing $\sigma_{\min}(H^{\frac{1}{2}}A_{\mathcal{I}_r}P)$ with respect to $\mathcal{I}_p$ admits no closed-form solution and remains computationally intractable in practice. The following result establishes a tractable lower bound for $\sigma_{\min}(H^{\frac{1}{2}}A_{\mathcal{I}_r}P)$, which provides a feasible surrogate criterion for selecting the index set $\mathcal{I}_p$.

\begin{lemma}\label{col-surrogate}
Under Assumption \ref{assumption1} and the condition $A_{\mathcal{I}_r}P\neq0$, the inequality
\begin{equation}\label{ineq-corollary}
		\sigma_{\min}(H^{\frac{1}{2}}A_{\mathcal{I}_r}P)\geq\lambda_{\min}(\bar{H})\frac{\sigma_{\min}(A_{\mathcal{I}_r}P)}{\lVert A_{\mathcal{I}_r}P\rVert_F}
\end{equation}
holds, where $H$ and $\bar{H}$ denote symmetric positive definite matrices defined in \eqref{eq-H}.
\end{lemma}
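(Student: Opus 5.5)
The plan is to compare the two quadratic forms $x^\top M^\top H M x$ and $x^\top M^\top \bar H M x$ in the Loewner sense, after restricting to vectors of the form $Mx$. Throughout, write $M:=A_{\mathcal I_r}P\neq 0$.

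\emph{Variational characterization.} By Lemma~\ref{lem-pd}, $H$ is positive definite. Hence $H^{1/2}M$ has the same null space as $M$, and its nonzero singular values are attained on $\operatorname{Range}(M^\top)$. This gives
\[
\sigma_{\min}^2(H^{\frac12}M)=\min_{x\in\operatorname{Range}(M^\top),\,\|x\|_2=1} x^\top M^\top H M x .
\]

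\emph{Pointwise comparison.} Fix a sample $S$ and a vector $x$.
\begin{itemize}
\item If $S^\top M\neq 0$, then $\|S^\top M\|_2\le \|S\|_2\|M\|_2\le \|S\|_2\|M\|_F$, and therefore
\[
\frac{\|S^\top Mx\|_2^2}{\|S^\top M\|_2^2}\ \ge\ \frac{\|S^\top Mx\|_2^2}{\|S\|_2^2\,\|M\|_F^2}.
\]
\item If $S^\top M=0$, both sides vanish, because $S^\top Mx=0$. This covers the $\frac00=0$ convention in $H$.
\end{itemize}
This is the one delicate point. The matrix inequality $SS^\top/\|S^\top M\|_2^2\succeq SS^\top/(\|S\|_2^2\|M\|_F^2)$ can fail for samples with $S^\top M=0$, so the comparison must be made only on vectors $Mx$, never as a full Loewner inequality.

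\emph{Taking expectations.} Averaging the pointwise comparison over $S$ gives
\[
x^\top M^\top H M x\ \ge\ \frac{1}{\|M\|_F^2}\,x^\top M^\top \bar H M x\ \ge\ \frac{\lambda_{\min}(\bar H)}{\|M\|_F^2}\,\|Mx\|_2^2 .
\]
For $x\in\operatorname{Range}(M^\top)$ we also have $\|Mx\|_2^2\ge \sigma_{\min}^2(M)\|x\|_2^2$. Combining these with the variational characterization yields
\[
\sigma_{\min}(H^{\frac12}M)\ \ge\ \sqrt{\lambda_{\min}(\bar H)}\;\frac{\sigma_{\min}(M)}{\|M\|_F}.
\]

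\emph{Removing the square root.} Each matrix $SS^\top/\|S\|_2^2$ has spectral norm at most $1$. Hence $\bar H\preceq I$, so $0<\lambda_{\min}(\bar H)\le 1$ and $\lambda_{\min}(\bar H)\le\sqrt{\lambda_{\min}(\bar H)}$. This gives \eqref{ineq-corollary}, which is in fact slightly weaker than the bound obtained in the previous step.
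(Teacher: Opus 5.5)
Your proof is correct and follows the same route as the paper. Both compare the $H$-weighted quadratic form with the $\bar H$-weighted one via $\|S^\top A_{\mathcal I_r}P\|_2\le\|S\|_2\|A_{\mathcal I_r}P\|_F$, bound the latter below by $\lambda_{\min}(\bar H)\|A_{\mathcal I_r}Px\|_2^2$, and take the Rayleigh quotient over $\operatorname{Range}(PA_{\mathcal I_r}^\top)$.

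Your version is also slightly more careful at two points. First, you compare only after sandwiching by $A_{\mathcal I_r}P$, whereas the paper asserts the full Loewner step $H\succeq \bar H/\|A_{\mathcal I_r}P\|_2^2$; that step can fail when samples with $S^\top A_{\mathcal I_r}P=0\neq S$ have positive probability. Second, you use $\bar H\preceq I$ to pass from the squared bound $\sigma_{\min}^2(H^{1/2}A_{\mathcal I_r}P)\ge\lambda_{\min}(\bar H)\,\sigma_{\min}^2(A_{\mathcal I_r}P)/\|A_{\mathcal I_r}P\|_F^2$, which is where the paper's proof stops, to the inequality as actually stated.
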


For any matrix $B$,	we define $\kappa(B):=\frac{\lVert B\rVert_F}{\sigma_{\min}(B)}$ as the scaled condition number \cite{demmel1988probability,ke2026robust} of $B$. 
	Since $\bar{H}$ is independent of $\mathcal{I}_p$, the only term in the lower bound \eqref{ineq-corollary} that depends on $\mathcal{I}_p$ is $\kappa(A_{\mathcal{I}_r}P)$. 
	A natural strategy for selecting the index set $\mathcal{I}_p$ is to minimize the scaled condition number $\kappa(A_{\mathcal{I}_r}P)$, yet this optimization task remains computationally challenging \cite{ipsen2025many}.
We therefore adopt the minimization of $\|A_{\mathcal{I}_r}P\|_F$ as a computationally feasible surrogate criterion for choosing $\mathcal{I}_p$. Recalling that $A_{\mathcal{I}_p}P=0$, we obtain
\(
AP=
\begin{pmatrix}
	0\\
	A_{\mathcal{I}_r}P
\end{pmatrix},
\)
which implies $\lVert A_{\mathcal{I}_r}P\rVert_F^2=\lVert AP\rVert_F^2=
 \| A - A A_{\mathcal{I}_p}^\dagger A_{\mathcal{I}_p}  \|^2_F$. 
For any given $m_p$, we thus consider the problem
\[
\min_{\substack{\mathcal{I}_p \subset [m],|\mathcal{I}_p| = m_p}}  
\left\| A - A A_{\mathcal{I}_p}^\dagger A_{\mathcal{I}_p} \right\|^2_F,
\]
which exactly corresponds to the row interpolative decomposition (ID) problem \cite{dong2023simpler,dong2025robust,pearce2025adaptive,cortinovis2026adaptive}. In the following, we introduce several standard ID-based row selection strategies for constructing the index set $\mathcal{I}_p$.

\textbf{Column-pivoted QR:}
Column-pivoted QR (CPQR) \cite[Sec. 5.4.2]{golub2013matrix} is a deterministic greedy method for skeleton selection. 
Let \(X^{(0)}=A\), and let \(X_i^{(t)}\) denote the \(i\)-th row of \(X^{(t)}\).
At the \((t+1)\)-th step, CPQR performs 
\begin{equation}\label{eq-CPQR}
	s_{t+1}
	\in
	\arg\max_{i\in[m]}
	\left\|X^{(t)}_i\right\|_2^2,\quad
	X^{(t+1)}
	=
	X^{(t)}
	-
	X^{(t)}
	\frac{
		\left(X^{(t)}_{s_{t+1}}\right)^\top X^{(t)}_{s_{t+1}}
	}{
		\left\|X^{(t)}_{s_{t+1}}\right\|_2^2
	}.
\end{equation}
After \(m_p\) steps, the selected pivots are collected as
\(
\mathcal I_p=\{s_1,\ldots,s_{m_p}\}.
\)

\textbf{SVD-based selection:} The SVD-based selection \cite[Section 5.5.7]{golub2013matrix} is motivated by the ideal case in which the selected rows span the
leading right singular subspace. Let \(
A=U\Sigma V^\top
\)
be the singular value decomposition of \(A\), and let \(V_{m_p}\) denote the leading \(m_p\) right singular
vectors of \(A\). If
\(
\operatorname{Range}(A_{\mathcal{I}_p}^{\top})
=
\operatorname{Range}(V_{m_p}),
\)
then \(A_{\mathcal{I}_p}^{\dagger}A_{\mathcal{I}_p}=V_{m_p}V_{m_p}^{\top}\), and hence
\[
\left\|A-AA_{\mathcal{I}_p}^{\dagger}A_{\mathcal{I}_p}\right\|_F^2
=
\left\|A-AV_{m_p}V_{m_p}^{\top}\right\|_F^2
=
\sum_{i=m_p+1}^{\min(m,n)}\sigma_i^2(A).
\]
In this case, the ID approximation error
\(
\left\|A-AA_{\mathcal{I}_p}^{\dagger}A_{\mathcal{I}_p}\right\|_F
\)
coincides with the error of the best rank-\(m_p\) approximation of \(A\) in the Frobenius norm \cite{eckart1936approximation}. 
To approximate such an index set, we form \(AV_{m_p}\), whose rows contain the coordinates
of the rows of \(A\) along the leading right singular directions. We then apply the
row-selection CPQR procedure to \(AV_{m_p}\) and take the first \(m_p\) row pivots as
\(\mathcal I_p\). 
This SVD-based selection procedure serves as an idealized benchmark, but is computationally expensive because it requires computing the leading right singular vectors.

\textbf{Squared-norm sampling:}
Squared-norm sampling (SqNorm) \cite{frieze2004fast,deshpande2006matrix} selects rows randomly according to their relative importance,
measured here by the squared row norms
\(
 \frac{\|a_i\|_2^2}{\|A\|_F^2}\), for 
\(i\in[m]\). 
We sample \(m_p\) rows without replacement from this distribution. This method is simple
to implement and has low row-selection cost.

\textbf{Sketched CPQR:}
Sketched CPQR (SkCPQR) \cite{dong2023simpler,duersch2020randomized} is a randomized variant of CPQR that performs pivot selection on a
compressed version of the matrix. Specifically, we draw a Gaussian matrix
\(G\in\mathbb R^{n\times s}\) with \(s< n\), and form
\(
Y=AG\in\mathbb R^{m\times s}
\). 
The rows of \(Y\) provide a lower-dimensional representation of the rows of \(A\). We then
apply the row-selection CPQR procedure to \(Y\), and use the resulting row pivots as
\(\mathcal I_p\). This strategy performs pivot selection in a lower-dimensional sketched
space, thereby reducing the cost of CPQR. Related randomized sketching methods for ID can
be found in
\cite{pearce2025randomized,epperly2025adaptive,cortinovis2026adaptive,pearce2025adaptive}.

\textbf{Robust blockwise random pivoting:} Random pivoting \cite{duersch2017randomized,epperly2025adaptive,chen2025randomly} is an adaptive alternative to deterministic greedy pivoting. Instead of
selecting the largest residual-norm pivot at each step in \eqref{eq-CPQR}, robust blockwise random pivoting (RBRP) \cite{dong2025robust} samples multiple pivots according to the
squared row norms of the current residual matrix. 
This preserves the adaptive nature of
greedy pivoting while introducing randomness into the pivot selection.

\section{Subspace-constrained iterative-sketching-based Krylov subspace methods}\label{sec-4}

In this section, we replace the stochastic gradient direction used in Algorithm~\ref{Algo-1} with search directions obtained by orthogonalizing the negative stochastic gradients via a truncated Gram-Schmidt process, motivated by the success of the orthogonal direction method \cite[Section 7]{shewchuk1994introduction}. This leads to the development of subspace-constrained iterative-sketching-based Krylov subspace  (SC-IS-Krylov) method.

 We first revisit the core framework of the orthogonal direction method~\cite{shewchuk1994introduction}. Assume that we have a set of orthogonal search directions $p^0, p^1, \ldots, p^k$.
The method updates the iterate at the $k$-th iteration via
\begin{equation}\label{iter-xk+1-cg}
x^{k+1} = \argmin{x \in x^k + \operatorname{span}\{p^k\}} \frac{1}{2}\|x - A^\dagger b\|_2^2.
\end{equation}
 When $p^k \neq 0$, this leads to the update $x^{k+1} = x^k + \delta_k p^k$ with
 \[
\delta_k=\argmin{\delta\in\mathbb{R}}\frac{1}{2}\lVert x^k+\delta p^k-A^\dagger b\rVert_2^2=-\frac{\langle p^k, x^k-A^\dagger b\rangle}{\lVert p^k \rVert_2^2}.
 \]
 However, computing $\delta_k$ requires knowledge of the unknown solution $A^\dagger b$; therefore, the method is not implementable in general.  To eliminate the dependence on $A^\dagger b$, the author  \cite{shewchuk1994introduction} required that $p^k$ be $A^\top A$-conjugate to $x^{k+1} - A^\dagger b$.
 
Nevertheless, we address this issue by constructing feasible search directions $p^k$ based on the negative stochastic gradient $d^{k}:=-(A_{\mathcal{I}_r}P)^\top S_{k}S_{k}^\top(A_{\mathcal{I}_r}Px^k-\hat{b}_{\mathcal{I}_r})$ to obtain the  search directions $p^k$.
In particular, we obtain $p^k$ via the following Gram-Schmidt orthogonalization process
\begin{equation}\label{iter-pk}
	\begin{aligned}
		p^{k}=d^k-\sum_{i=j_{k,\ell}}^{k-1}\frac{\langle d^k,p^i\rangle}{\lVert p^i\rVert_2^2}p^i,
	\end{aligned}
\end{equation}
where $j_{k,\ell} := \max\{k - \ell + 1, 0\}$ tracks the most recent $\ell$ iterates and $\ell$ is a positive integer. We refer to $\ell$ as the truncation parameter.  For instance, $\ell = +\infty$ corresponds to using all previous vectors $p^i$.  
The following lemma shows that $\langle p^k,x^k-A^\dagger b\rangle=-\lVert S_k^\top(A_{\mathcal{I}_r}x^k-b_{\mathcal{I}_r})\rVert_2^2$ under a  mild condition.

\begin{lemma}\label{lem-pkneq0}
Let the initial point be $x^0=A_{\mathcal I_p}^\dagger b_{\mathcal I_p}$.
Set $p^0=d^0=-PA_{\mathcal I_r}^\top S_0S_0^\top
(A_{\mathcal I_r}x^0-b_{\mathcal I_r})$. 
Suppose that the sequences $\{x^k\}_{k\geq 0}$ and $\{p^k\}_{k\geq 1}$  are generated by \eqref{iter-xk+1-cg} and \eqref{iter-pk}, respectively. For any \(k\geq0\), we have $x^k\in\mathcal{X}_p$. 
If $S_i^\top(A_{\mathcal{I}_r}x^i-b_{\mathcal{I}_r})\neq0$ for $i=0,\ldots,k$, 
then $p^k\neq 0$ and
\(
\langle p^k,x^k-A^\dagger b\rangle=-\lVert S_k^\top(A_{\mathcal{I}_r}x^k-b_{\mathcal{I}_r})\rVert_2^2.
\) 
Moreover,   if \(\ell\ge2\) and $k\geq1$, then
\(
\langle p^\mu,p^\nu\rangle=0\) for all \(j_{k,\ell}\le \mu<\nu\le k .
\)
\end{lemma}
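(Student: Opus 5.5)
We argue by induction on $k$, carrying three facts along together: (i) $x^k\in\mathcal{X}_p$ and $x^k-x^0\in\operatorname{Range}(P)$; (ii) $p^k\neq0$ with $\langle p^k,x^k-A^\dagger b\rangle=-\|S_k^\top(A_{\mathcal I_r}x^k-b_{\mathcal I_r})\|_2^2$; (iii) $\langle p^{j},x^{j+1}-A^\dagger b\rangle=0$ for every earlier $j$, and pairwise orthogonality within the truncation window.

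Three observations come first.

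\emph{Directions lie in $\operatorname{Range}(P)$.} Every $d^i$ lies in $\operatorname{Range}(P)$. By \eqref{iter-pk}, each $p^i$ is $d^i$ minus a combination of earlier $p$'s, so every $p^i\in\operatorname{Range}(P)$. Since $A_{\mathcal I_p}P=0$ and $x^0=A_{\mathcal I_p}^\dagger b_{\mathcal I_p}\in\mathcal X_p$, all iterates $x^{k+1}=x^k+\delta_kp^k$ stay in $\mathcal X_p$. This gives the first claim.

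\emph{Reduction to the projected system.} For $x\in\mathcal X_p$ we have $A_{\mathcal I_r}Px-\hat b_{\mathcal I_r}=A_{\mathcal I_r}x-b_{\mathcal I_r}$, as computed before \eqref{iter-sim}. Let $x^*=A^\dagger b$. Consistency gives $A_{\mathcal I_p}x^*=b_{\mathcal I_p}$, so $x^*\in\mathcal X_p$, and therefore
\[
A_{\mathcal I_r}x^k-b_{\mathcal I_r}=A_{\mathcal I_r}P(x^k-x^*).
\]
It follows that
\[
\langle d^k,x^k-x^*\rangle=-\|S_k^\top A_{\mathcal I_r}P(x^k-x^*)\|_2^2=-\|S_k^\top(A_{\mathcal I_r}x^k-b_{\mathcal I_r})\|_2^2.
\]

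\emph{Exact line search.} The choice of $\delta_j$ in \eqref{iter-xk+1-cg} gives $\langle p^j,x^{j+1}-x^*\rangle=0$. This requires $p^j\neq0$, which the induction supplies.

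The key step is to show $\langle p^i,x^k-x^*\rangle=0$ for $j_{k,\ell}\le i\le k-1$. Write
\[
x^k-x^*=(x^{i+1}-x^*)+\sum_{t=i+1}^{k-1}\delta_tp^t .
\]
The first term is orthogonal to $p^i$ by the line search. The remaining terms need $\langle p^i,p^t\rangle=0$ for $i<t\le k-1$, but truncation only orthogonalizes $p^t$ against $p^{j_{t,\ell}},\dots,p^{t-1}$. Since $j_{t,\ell}\le j_{k,\ell}\le i$ whenever $t\le k$, every such pair $(i,t)$ does fall in the window of $p^t$. So I will strengthen the induction hypothesis to the statement that $p^t$ is orthogonal to $p^{j_{t,\ell}},\dots,p^{t-1}$.

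This is the main obstacle, because truncated Gram--Schmidt does not by itself make $p^t$ orthogonal to the earlier window vectors: they are not mutually orthogonal in general. I will therefore prove the window orthogonality jointly. Assume $p^{j_{k,\ell}},\dots,p^{k-1}$ are pairwise orthogonal; this holds because each pair $(\mu,\nu)$ with $j_{k,\ell}\le\mu<\nu\le k-1$ satisfies $\mu\ge j_{\nu,\ell}$, so it lies in $\nu$'s own window. Then the projection in \eqref{iter-pk} is an orthogonal projection onto their span, so $p^k\perp p^i$ for all $i$ in the window. This gives the final claim $\langle p^\mu,p^\nu\rangle=0$; the hypothesis $\ell\ge2$ only ensures the window is nontrivial.

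Combining these, $\langle p^k,x^k-x^*\rangle=\langle d^k,x^k-x^*\rangle$, which is the stated negative squared norm. This is nonzero by the hypothesis $S_k^\top(A_{\mathcal I_r}x^k-b_{\mathcal I_r})\neq0$, hence $p^k\neq0$. That makes $\delta_k$ well defined and closes the induction.
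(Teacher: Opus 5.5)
Your proposal is correct and follows essentially the same route as the paper: $x^k\in\mathcal{X}_p$ because every $p^k$ lies in $\operatorname{Range}(P)$; the identity $\langle d^k,x^k-A^\dagger b\rangle=-\|S_k^\top(A_{\mathcal I_r}x^k-b_{\mathcal I_r})\|_2^2$ (using consistency); exact line search plus telescoping to make $\langle p^i,x^k-A^\dagger b\rangle$ vanish on the window; and window orthogonality by induction using the monotonicity of $j_{k,\ell}$. The only difference is organizational: the paper isolates window orthogonality as a separate auxiliary lemma and handles $\ell=1$ and $k=0,1$ as separate cases, whereas you fold everything into one joint induction (your remark that the window vectors are ``not mutually orthogonal in general'' just means this must be proved simultaneously, and is not an error).
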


Hence, if $S_i^\top(A_{\mathcal{I}_r}x^i-b_{\mathcal{I}_r})\neq0,i=0,\ldots,k$, we have
\begin{equation}\label{def-delta} 
		\delta_k
		=\frac{\lVert S_{k}^\top(A_{\mathcal{I}_r}x^k-b_{\mathcal{I}_r})\rVert_2^2}{\lVert p^k\rVert_2^2},
\end{equation}
where the last equation follows from Lemma \ref{lem-pkneq0}. The proposed method is summarized in Algorithm~\ref{Algo-2}. The justification for the name SC-IS-Krylov will be provided in Section~\ref{section-4-2}.
 
\begin{algorithm}[htpb]
	\caption{The SC-IS-Krylov method}
	\label{Algo-2}
	\begin{algorithmic}
		\Require $A\in\mathbb{R}^{m\times n},b\in\mathbb{R}^m$, fixed row indices $\mathcal{I}_p\subseteq[m],\mathcal{I}_r=[m]\backslash \mathcal{I}_p$, probability space ${(\Omega,\mathcal{F},\mathbf{P})}$, positive integer $\ell$, and $k=0$.
		\begin{enumerate}
            \item[1:] Compute $A_{\mathcal{I}_p}^\dagger$ and set $x^0=A_{\mathcal{I}_p}^\dagger b_{\mathcal{I}_p}$.
			\item [2:] Randomly select a matrix $S_0 \in \Omega$ until $S_{0}^\top(A_{\mathcal{I}_r}x^{0}-b_{\mathcal{I}_r})\neq 0$. 
			\item [3:] Compute
			\begin{equation*}
				\left\{
				\begin{array}{l}
					g^0=-A_{\mathcal{I}_r}^\top S_{0}S_{0}^\top(A_{\mathcal{I}_r}x^{0}-b_{\mathcal{I}_r}),\\
					\bar{g}^0=A_{\mathcal{I}_p}g^0,\\
					p^0=g^0-A_{\mathcal{I}_p}^\dagger\bar{g}^0.
				\end{array}\right.
			\end{equation*}
			\item[4:] Set $\delta_k =\lVert S_{k}^\top(A_{\mathcal{I}_r}x^k-b_{\mathcal{I}_r})\rVert_2^2 / \|p^k\|_2^2$.
			\item[5:] Update $x^{k+1}=x^k+\delta_k p^k$.
			\item[6:] Randomly select a matrix $S_{k+1} \in \Omega$ until $ S_{k+1}^\top(A_{\mathcal{I}_r}x^{k+1}-b_{\mathcal{I}_r})\neq 0$.
			\item [7:] Update  $j_{k+1,\ell}=\max\{k-\ell+2,0\}$ and compute
			\begin{equation}\label{iter-krylov}
				\left\{
				\begin{array}{l}
					g^{k+1}=-A_{\mathcal{I}_r}^\top S_{k+1}S_{k+1}^\top(A_{\mathcal{I}_r}x^{k+1}-b_{\mathcal{I}_r}),\\
					\bar{g}^{k+1}=A_{\mathcal{I}_p}g^{k+1},\\
					d^{k+1}=g^{k+1}-A_{\mathcal{I}_p}^\dagger \bar{g}^{k+1},\\
					\eta_{k+1}^i=\langle d^{k+1},p^i\rangle/\lVert p^i\rVert_2^2, \ i=j_{k+1,\ell},\ldots,k,\\
					p^{k+1}=d^{k+1}-\sum_{i=j_{k+1,\ell}}^{k}\eta_{k+1}^ip^i.
				\end{array}\right.
			\end{equation}
			\item [8:] If the stopping rule is satisfied, stop and go to output. Otherwise, set $k=k+1$ and return to Step 4.
		\end{enumerate}
		\Ensure
		The approximate solution $x^k$.
	\end{algorithmic}
\end{algorithm}

\subsection{Convergence analysis}
We begin by introducing some auxiliary variables. 
Define $\tilde{P}_0$ as the empty matrix, and for $k \geq 1$, define
\begin{equation}\label{eq-tilde-Pk}
	\tilde{P}_k := \left( p^{j_{k,\ell}},\, p^{j_{k,\ell}+1},\, \ldots,\, p^{k-1} \right) \in \mathbb{R}^{n \times (k - j_{k,\ell})},
\end{equation}
where the vectors $p^i$ for $i = j_{k,\ell}, \dots, k-1$ are defined in Algorithm~\ref{Algo-2}. 
Since the columns of \(\tilde{P}_k\) are mutually orthogonal and nonzero, it follows from Lemma \ref{lem-pkneq0} that
\[
\tilde{P}_k^\dagger = \begin{pmatrix}
	\frac{p^{j_{k,\ell}}}{\lVert p^{j_{k,\ell}} \rVert_2^2}, \frac{p^{j_{k,\ell}+1}}{\lVert p^{j_{k,\ell}+1} \rVert_2^2}, \ldots, \frac{p^{k-1}}{\lVert p^{k-1} \rVert_2^2}
\end{pmatrix}^\top.
\]
Then, for $k\geq1$, we can rewrite $p^k$ in \eqref{iter-pk} as
\begin{equation}\label{iter-pk-2}
	p^k=d^k-\tilde{P}_k\tilde{P}_k^\dagger d^k=(I-\tilde{P}_k\tilde{P}_k^\dagger) d^k.
\end{equation}
In addition, we define 
\begin{equation}\label{eq-Q_k}    
	\mathcal{Q}_k := \left\{ S \in \Omega \mid S^\top (A_{\mathcal{I}_r} x^k - b_{\mathcal{I}_r}) \neq 0 \right\}.
\end{equation}
and 
\begin{equation} \label{eq-q_k}
	q_k := \inf_{S_k \in \mathcal{Q}_k} \left\{ \left(1 - \frac{\lVert \tilde{P}_k \tilde{P}_k^\dagger d^k \rVert_2^2}{\lVert d^k \rVert_2^2} \right)^{-1} \right\}.
\end{equation}

At the \(k\)-th iteration, we consider the product probability space
\(
\left( \prod_{i=0}^k \Omega, \otimes_{i=0}^k \mathcal{F}, \tilde{\mathbf{P}} \right),
\)
where \(\otimes\) denotes the product of \(\sigma\)-algebras and \(\tilde{\mathbf{P}}\) is the corresponding product measure, as defined in \cite[Section 5]{athreya2006measure}.
Let \(\mathcal{B}_k:= \sigma(S_0, S_1, \ldots, S_{k-1})\) be the $\sigma$-algebra generated by the random variables $ (S_0, S_1, \ldots, S_{k-1})$, 
where $\mathcal{B}_0$ is the trivial $\sigma$-algebra (i.e., $\mathcal{B}_0 = \{\emptyset, \Omega\}$). 
The conditional expectation given \(\mathcal{B}_k\) is denoted by
\(
\mathbb{E}_k[\cdot] := \mathbb{E}[\cdot \mid \mathcal{B}_k].
\)
The convergence result for Algorithm \ref{Algo-2} is as follows.

\begin{theorem}\label{Thm-Convergence-IS-Krylov-SC}
	Suppose that the linear system $Ax=b$ is consistent and that the probability space $(\Omega,\mathcal{F},\mathbf{P})$ satisfies Assumption \ref{assumption1}. Let   $\{x^k\}_{k\geq0}$ be the iteration sequence generated by Algorithm {\rm\ref{Algo-2}}. If $Ax^{k+1}=b$, then $x^{k+1}=A^\dagger b$. Otherwise, 
	\[
			\mathbb{E}_k\left[\lVert x^{k+1}-A^\dagger b\rVert_2^2 \right]\leq\left(1-q_k \sigma^2_{\min}(H^{\frac{1}{2}}A_{\mathcal{I}_r}P) \right)\lVert x^k-A^\dagger b\rVert_2^2,
	\]
	where $q_k\geq1$ and $H$ are defined in \eqref{eq-q_k} and \eqref{eq-H}, respectively.
\end{theorem}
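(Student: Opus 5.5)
The argument follows the SCRIM analysis but replaces the stepsize by the exact line-search step along the orthogonalized direction $p^k$. Let $x^*=A^\dagger b$ and $e^k=x^k-x^*$, and write $r^k:=A_{\mathcal{I}_r}x^k-b_{\mathcal{I}_r}$.

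\textbf{Invariance.} By Lemma~\ref{lem-pkneq0}, every $x^k\in\mathcal{X}_p$. Consistency of $Ax=b$ gives $A_{\mathcal{I}_p}x^*=b_{\mathcal{I}_p}$, so $A_{\mathcal{I}_p}e^k=0$. Each $p^i$ lies in $\operatorname{Range}(PA_{\mathcal{I}_r}^\top)$, and $x^0=A_{\mathcal{I}_p}^\dagger b_{\mathcal{I}_p}\in\operatorname{Range}(A^\top)$. Hence $x^k\in\operatorname{Range}(A^\top)$ by Lemma~\ref{lemma-same-solution-set}, and so $e^k\in\operatorname{Range}(A^\top)$. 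Combined with $A_{\mathcal{I}_p}e^k=0$ and the decomposition $A^\dagger A=A_{\mathcal{I}_p}^\dagger A_{\mathcal{I}_p}+(A_{\mathcal{I}_r}P)^\dagger A_{\mathcal{I}_r}P$, this gives $e^k\in\operatorname{Range}((A_{\mathcal{I}_r}P)^\top)$.

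\textbf{First claim.} If $Ax^{k+1}=b$, then $e^{k+1}$ lies in $\operatorname{Null}(A)\cap\operatorname{Range}(A^\top)=\{0\}$, so $x^{k+1}=A^\dagger b$.

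\textbf{One-step identity.} Since $\delta_k$ is the exact minimizer along $p^k$, we have
\[
\|e^{k+1}\|_2^2=\|e^k\|_2^2-\frac{\langle p^k,e^k\rangle^2}{\|p^k\|_2^2}.
\]
Lemma~\ref{lem-pkneq0} gives $\langle p^k,e^k\rangle=-\|S_k^\top r^k\|_2^2$. By \eqref{iter-pk-2}, $\|p^k\|_2^2=\|d^k\|_2^2-\|\tilde P_k\tilde P_k^\dagger d^k\|_2^2$. Therefore
\[
\frac{\langle p^k,e^k\rangle^2}{\|p^k\|_2^2}
=\Bigl(1-\frac{\|\tilde P_k\tilde P_k^\dagger d^k\|_2^2}{\|d^k\|_2^2}\Bigr)^{-1}\frac{\|S_k^\top r^k\|_2^4}{\|d^k\|_2^2}
\ge q_k\frac{\|S_k^\top r^k\|_2^4}{\|d^k\|_2^2},
\]
where the last step holds for $S_k\in\mathcal{Q}_k$. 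Also $q_k\ge1$, since the projection norm is at most $\|d^k\|_2^2$.

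\textbf{Lower bound on the decrease.} Because $d^k=-(A_{\mathcal{I}_r}P)^\top S_kS_k^\top r^k$, we get $\|d^k\|_2^2\le\|S_k^\top A_{\mathcal{I}_r}P\|_2^2\,\|S_k^\top r^k\|_2^2$. Hence the decrease is at least
\[
q_k\frac{\|S_k^\top r^k\|_2^2}{\|S_k^\top A_{\mathcal{I}_r}P\|_2^2}.
\]
Here $r^k=A_{\mathcal{I}_r}Pe^k$, using $Px^k-\hat b$ identities and $A_{\mathcal{I}_r}Px^*=\hat b_{\mathcal{I}_r}$.

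\textbf{Expectation.} Take $\mathbb{E}_k$ over $S_k$, which is drawn from the conditional law restricted to $\mathcal{Q}_k$. Sketches with $S^\top r^k=0$ contribute zero to $\mathbb{E}[SS^\top/\|S^\top A_{\mathcal{I}_r}P\|^2]$. So, up to the normalization $\mathbf{P}(\mathcal{Q}_k)\le1$, which only helps, the expected decrease is at least
\[
q_k\,(e^k)^\top(A_{\mathcal{I}_r}P)^\top H A_{\mathcal{I}_r}P\,e^k
\ge q_k\,\sigma_{\min}^2(H^{1/2}A_{\mathcal{I}_r}P)\,\|e^k\|_2^2 .
\]
The last inequality uses $e^k\in\operatorname{Range}((A_{\mathcal{I}_r}P)^\top)$ and $H\succ0$ from Lemma~\ref{lem-pd}. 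This is exactly the claimed bound.

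\textbf{Main obstacle.} The delicate step is the expectation. It requires care with the conditional sampling on $\mathcal{Q}_k$ and with $q_k$ being an infimum over random $S_k$, so that it can be pulled out of the expectation. The range argument for $e^k$ also needs some care in the rank-deficient case.
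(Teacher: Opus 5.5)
Your proposal is correct and follows essentially the same route as the paper. The shared steps are the exact line-search identity via Lemma~\ref{lem-pkneq0}, the splitting $\|p^k\|_2^2=\|d^k\|_2^2-\|\tilde P_k\tilde P_k^\dagger d^k\|_2^2$ to extract $q_k$, the bound $\|d^k\|_2^2\le\|S_k^\top A_{\mathcal{I}_r}P\|_2^2\|S_k^\top r^k\|_2^2$, conditioning on $\mathcal{Q}_k$ and dropping the indicator, and the range condition on $e^k$. Your $\operatorname{Null}(A)\cap\operatorname{Range}(A^\top)=\{0\}$ argument for the first claim is a slightly cleaner variant of the paper's explicit decomposition.

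The one point to make explicit is that $\mathbf{P}(\mathcal{Q}_k)>0$ when $Ax^k\neq b$. This follows from $(r^k)^\top\mathbb{E}[SS^\top]r^k>0$ under Assumption~\ref{assumption1}, and it guarantees that the conditional law you integrate against is well defined.
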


Upon comparing Theorems~\ref{Thm-Convergence-SCRIM} and~\ref{Thm-Convergence-IS-Krylov-SC}, we observe that the SC-IS-Krylov method exhibits a convergence bound that is at least as tight as that of the SCRIM method. In particular, for certain probability spaces, the parameter $q_k$ in Theorem~\ref{Thm-Convergence-IS-Krylov-SC} can be strictly greater than $1$.
Indeed, consider the case where the sample space is $\Omega = \{S\}$ with $S$ being a fixed matrix such that $S^\top S$ is positive definite. We next show that when $A_{\mathcal{I}_r} x^k - b_{\mathcal{I}_r} \neq 0$, it holds that $q_k > 1$.
From the definition of $q_k$ in \eqref{eq-q_k} and noting that $\tilde{P}_k^\top = \tilde{P}_k^\top \tilde{P}_k \tilde{P}_k^\dagger$, to ensure $\tilde{P}_k \tilde{P}_k^\dagger d^k \neq 0$, it suffices to prove that $\tilde{P}_k^\top d^k \neq 0$. In particular, we show that the last component of $\tilde{P}_k^\top d^k$ is nonzero, i.e.,
\[
\langle p^{k-1}, d^k \rangle = \frac{\langle x^k - x^{k-1}, d^k \rangle}{\delta_{k-1}} = \frac{\langle x^k - A^\dagger b, d^k \rangle}{\delta_{k-1}} - \frac{\langle x^{k-1} - A^\dagger b, d^k \rangle}{\delta_{k-1}} \neq 0.
\]
	To show this, we first evaluate the term $\langle x^{k-1} - A^\dagger b,d^k\rangle$. 
	Since $d^k=-PA_{\mathcal{I}_r}^\top SS^\top(A_{\mathcal{I}_r}x^k-b_{\mathcal{I}_r})$ and $A_{\mathcal{I}_r}x^k-b_{\mathcal{I}_r}=A_{\mathcal{I}_r}(x^k-x^{k-1})+A_{\mathcal{I}_r}x^{k-1}-b_{\mathcal{I}_r}$, it follows that 
	\[
		\begin{aligned}
			&\langle x^{k-1} - A^\dagger b, d^k \rangle\\
			&= -\langle x^{k-1} - A^\dagger b, PA_{\mathcal{I}_r}^\top SS^\top A_{\mathcal{I}_r} (x^k - x^{k-1}) \rangle-\langle x^{k-1} - A^\dagger b, PA_{\mathcal{I}_r}^\top SS^\top (A_{\mathcal{I}_r} x^{k-1} - b_{\mathcal{I}_r}) \rangle\\
			&=-\langle PA_{\mathcal{I}_r}^\top SS^\top A_{\mathcal{I}_r} (x^{k-1} - A^\dagger b), x^k - x^{k-1} \rangle-\| S^\top (A_{\mathcal{I}_r}  x^{k-1} - b_{\mathcal{I}_r}) \|_2^2\\
			&= \langle d_{k-1}, \delta_{k-1} p^{k-1} \rangle-\| S^\top (A_{\mathcal{I}_r}  x^{k-1} - b_{\mathcal{I}_r}) \|_2^2.
		\end{aligned}
	\]
	From the definition of $p^{k-1}$ in \eqref{iter-pk}, we have 
	$$\langle d_{k-1}, \delta_{k-1} p^{k-1} \rangle=\delta_{k-1} \langle p^{k-1} + \sum_{i = j_{k-1}}^{k-2} \eta_{k-1}^i p^i, p^{k-1} \rangle= \delta_{k-1} \| p^{k-1} \|_2^2=\| S^\top (A_{\mathcal{I}_r}  x^{k-1} - b_{\mathcal{I}_r}) \|_2^2,$$
	where the last equality follows from the definition of \( \delta_{k-1} \) in \eqref{def-delta}. Combining these results, we obtain \( \langle x^{k-1} - A^\dagger b, d^k \rangle = 0 \).
	Thus, we have
	\[
		\langle p^{k-1},d^k\rangle=\frac{\langle x^k-x^{k-1},d^k\rangle}{\delta_{k-1}}=\frac{\langle x^k - A^\dagger b,d^k\rangle}{\delta_{k-1}}=\frac{\| S^\top (A_{\mathcal{I}_r} x^k - b_{\mathcal{I}_r}) \|_2^2}{\delta_{k-1}}\neq0.
	\]
	This implies that \( \tilde{P}_k^\top d^k \neq 0 \) as long as \( A_{\mathcal{I}_r}  x^k \neq b_{\mathcal{I}_r} \).  
	Consequently, we have $q_k>1$. 

\subsection{Connection to the Krylov subspace method}
\label{section-4-2}

We first introduce an equivalent form of the iterate $x^{k+1}$ obtained by \eqref{iter-xk+1-cg} and \eqref{iter-pk}.
\begin{lemma}\label{lemma-0423}
Let $\{x^k\}_{k\geq0}$ and $\{p^k\}_{k\geq0}$ be the sequences generated by Algorithm~\ref{Algo-2}. Then, for all $k \geq 0$,  the iterate $x^{k+1}$ is the unique minimizer of
\begin{equation}\label{iter-xk+1-cg2}
	\min_{x \in x^{j_{k,\ell}} + \operatorname{span}\{p^{j_{k,\ell}}, \ldots, p^k\}} \|x - A^\dagger b\|_2^2.
\end{equation}
\end{lemma}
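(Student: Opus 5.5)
We must show that $x^{k+1}$ from Algorithm~\ref{Algo-2} is the unique minimizer of \eqref{iter-xk+1-cg2}. The plan is to unroll the one-step updates back to $x^{j_{k,\ell}}$ and then use the mutual orthogonality of the recent directions from Lemma~\ref{lem-pkneq0}. We assume throughout that $S_i^\top(A_{\mathcal{I}_r}x^i-b_{\mathcal{I}_r})\neq0$ for $i\le k$, which holds for all iterates the algorithm actually produces.

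\textbf{Unrolling.} Write $j:=j_{k,\ell}$. By Step 5 we have
\[
x^{k+1}=x^{j}+\sum_{i=j}^{k}\delta_i p^i ,
\]
so $x^{k+1}$ lies in the affine set $x^{j}+\operatorname{span}\{p^{j},\ldots,p^k\}$.

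\textbf{Orthogonality and uniqueness.} By Lemma~\ref{lem-pkneq0}, each $p^i$ is nonzero. If $\ell\ge2$ and $k\ge1$, the same lemma gives $\langle p^\mu,p^\nu\rangle=0$ for $j\le\mu<\nu\le k$. If $\ell=1$ or $k=0$, then $j=k$ and the span is one-dimensional, so that case is immediate. Hence $p^{j},\ldots,p^k$ are linearly independent. The objective is strictly convex, so the minimizer over the affine set is unique. It is characterized by the optimality condition
\[
\langle x^{k+1}-A^\dagger b,\;p^\mu\rangle=0,\qquad \mu=j,\ldots,k.
\]

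\textbf{Verifying the optimality condition.} Fix $\mu\in\{j,\ldots,k\}$. Split
\[
x^{k+1}-A^\dagger b=(x^{\mu}-A^\dagger b)+\sum_{i=\mu}^{k}\delta_i p^i .
\]
By orthogonality, the sum contributes only the $i=\mu$ term, namely $\delta_\mu\|p^\mu\|_2^2$. By \eqref{def-delta} this equals $\|S_\mu^\top(A_{\mathcal{I}_r}x^\mu-b_{\mathcal{I}_r})\|_2^2$. By Lemma~\ref{lem-pkneq0},
\[
\langle x^\mu-A^\dagger b,p^\mu\rangle=-\|S_\mu^\top(A_{\mathcal{I}_r}x^\mu-b_{\mathcal{I}_r})\|_2^2 .
\]
The two contributions cancel, so the inner product with $p^\mu$ vanishes for every $\mu$.

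\textbf{Main obstacle.} The delicate point is the orthogonality claim itself. Lemma~\ref{lem-pkneq0} gives it only within the truncation window ending at the current index $k$. The window for $\mu$ is a different (earlier) window, but we only ever need pairwise orthogonality of indices inside $[j_{k,\ell},k]$, and that is exactly what the lemma supplies for index $k$. So the argument closes with no further orthogonality needed.
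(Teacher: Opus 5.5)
Your proof is correct and follows essentially the same route as the paper: you unroll $x^{k+1}=x^{j_{k,\ell}}+\sum_{i=j_{k,\ell}}^{k}\delta_i p^i$, invoke the in-window orthogonality from Lemma~\ref{lem-pkneq0}, and cancel $\delta_\mu\|p^\mu\|_2^2$ against $\langle p^\mu,x^\mu-A^\dagger b\rangle=-\|S_\mu^\top(A_{\mathcal{I}_r}x^\mu-b_{\mathcal{I}_r})\|_2^2$. The difference is only presentational. The paper solves the diagonal normal equations and shows the coefficients equal $\delta_i$, using the fact $\langle p^i,x^k-A^\dagger b\rangle=0$ taken from inside the proof of Lemma~\ref{lem-pkneq0}; you instead check the equivalent first-order orthogonality conditions at $x^{k+1}$ directly, using only that lemma's stated identity.
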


Recall that, given a matrix $B \in \mathbb{R}^{n \times n}$ and a vector $r \in \mathbb{R}^n$, the Krylov subspace of order $k$ is defined as~\cite[Section 10.1.1]{golub2013matrix}
\begin{equation}
	\label{def-krylov}
	\mathcal{K}_k(B, r) := \operatorname{span}\{r, Br, B^2 r, \ldots, B^{k-1} r\}.
\end{equation}
The following theorem demonstrates that $x^0 + \mathcal{K}_k\bigl(\hat{A}^\top \hat{A},\; \hat{A}^\top (\hat{A} x^0 - \hat{b})\bigr)$ is a specific instance of the subspace in Lemma~\ref{lemma-0423}.
\begin{theorem}
	\label{xie-equ-krylov}
Suppose that $\{x^k\}_{k\geq0}$ is the iteration sequence generated by Algorithm~\ref{Algo-2} with $\ell = \infty$ and $\Omega = \{I\}$. Let $\hat{r}^0 = \hat{A} x^0 - \hat{b}$ and $\hat{r}_{\mathcal{I}_r}^0 = A_{\mathcal{I}_r} P x^0 - \hat{b}_{\mathcal{I}_r}$.
	Then the affine subspaces  $x^0+\operatorname{span}\{p^0,p^1,\ldots,p^k\}$, $x^0+\mathcal{K}_{k+1}\left(PA_{\mathcal{I}_r}^\top A_{\mathcal{I}_r}P,\;PA_{\mathcal{I}_r}^\top \hat{r}_{\mathcal{I}_r}^0\right)$, and $x^0+\mathcal{K}_{k+1}\left(\hat A^\top\hat A,\;\hat A^\top\hat{r}^0\right)$ are identical.
\end{theorem}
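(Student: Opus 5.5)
Write $B:=A_{\mathcal I_r}P$ and $M:=B^\top B=PA_{\mathcal I_r}^\top A_{\mathcal I_r}P$. The proof splits into two parts: the second and third affine subspaces coincide by a direct algebraic computation, and the first coincides with the second by a CG-style induction.

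\emph{Second and third subspaces.} By the QR-like factorization \eqref{eq-Atop-QR-like} and $A_{\mathcal I_p}P=0$, we have $\hat A^\top\hat A = A_{\mathcal I_p}^\top A_{\mathcal I_p} + M$. Since $x^0=A_{\mathcal I_p}^\dagger b_{\mathcal I_p}$ satisfies $A_{\mathcal I_p}x^0=b_{\mathcal I_p}$, the first block of $\hat r^0$ vanishes, so $\hat A^\top\hat r^0 = B^\top\hat r^0_{\mathcal I_r}$. This vector lies in $\operatorname{Range}(P)$. On $\operatorname{Range}(P)$ the operator $A_{\mathcal I_p}^\top A_{\mathcal I_p}$ acts as zero, because $A_{\mathcal I_p}P=0$. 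Also $M$ maps $\operatorname{Range}(P)$ into itself. By induction, $(\hat A^\top\hat A)^j\hat A^\top\hat r^0 = M^jB^\top\hat r^0_{\mathcal I_r}$ for every $j$, so the two Krylov spaces agree term by term.

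\emph{First and second subspaces.} With $\Omega=\{I\}$, each $d^k$ equals $-B^\top(Bx^k-\hat b_{\mathcal I_r}) = -B^\top\hat r^0_{\mathcal I_r} - M(x^k-x^0)$, using the identity $A_{\mathcal I_r}x^k-b_{\mathcal I_r}=Bx^k-\hat b_{\mathcal I_r}$ established before \eqref{iter-sim}. Let $\mathcal K_j$ denote the second Krylov space of order $j$. We prove by induction on $k$ that $\operatorname{span}\{p^0,\dots,p^k\}=\mathcal K_{k+1}$.

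The base case holds because $p^0=d^0=-B^\top\hat r^0_{\mathcal I_r}$.

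For the inclusion step, assume the claim for all indices up to $k-1$. Then $x^k-x^0\in\mathcal K_k$, hence $M(x^k-x^0)\in\mathcal K_{k+1}$, so $d^k\in\mathcal K_{k+1}$. Since $p^k$ is $d^k$ minus a combination of $p^0,\dots,p^{k-1}$ (here $\ell=\infty$), we get $p^k\in\mathcal K_{k+1}$.

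For the reverse inclusion we need dimensions. The $p^i$ are nonzero and mutually orthogonal by Lemma~\ref{lem-pkneq0}, so $\dim\operatorname{span}\{p^0,\dots,p^k\}=k+1\ge\dim\mathcal K_{k+1}$. Together with the inclusion this forces equality.

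\emph{Main obstacle: termination.} The induction above presumes the hypothesis of Lemma~\ref{lem-pkneq0}, namely $Bx^i\ne\hat b_{\mathcal I_r}$ for all $i\le k$. If instead some $x^j$ solves the system, the algorithm stops and the claim must be read for $k<j$. In that case I would also check the Krylov side. Since $x^j-x^0\in\mathcal K_j$ and $B^\top(Bx^j-\hat b_{\mathcal I_r})=0$, we get $B^\top\hat r^0_{\mathcal I_r}\in M\mathcal K_j$. This shows $\mathcal K$ has stabilized at order $j$, so both sides stop growing together. The dimension argument is the only genuinely delicate point; the rest is bookkeeping.
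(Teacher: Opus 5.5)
Your proposal is correct and follows essentially the same route as the paper. Both arguments use the induction that gives $d^k\in\mathcal K_{k+1}$ via $d^k=-B^\top\hat r^0_{\mathcal I_r}-M(x^k-x^0)$, closed by the orthogonality and dimension count from Lemma~\ref{lem-pkneq0}, together with the term-by-term identity $(\hat A^\top\hat A)^j\hat A^\top\hat r^0=M^jB^\top\hat r^0_{\mathcal I_r}$ coming from $A_{\mathcal I_p}x^0=b_{\mathcal I_p}$ and $A_{\mathcal I_p}P=0$. Your remark on termination is a correct addition that the paper leaves implicit: the claim only concerns indices $k$ for which $p^k$ is defined, and the Krylov space stabilizes once $Bx^j=\hat b_{\mathcal I_r}$.
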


By Lemma~\ref{lemma-0423} and Theorem~\ref{xie-equ-krylov}, when $\ell = \infty$ and $\Omega = \{I\}$, the proposed orthogonalized search direction scheme \eqref{iter-xk+1-cg}--\eqref{iter-pk} reduces to a standard Krylov subspace method.
Specifically, the update rule \eqref{iter-xk+1-cg} takes the form
\[
\begin{aligned}
	x^{k+1} = \mathop{\arg\min}_{x \in \mathbb{R}^n} &\; \lVert x - A^\dagger b \rVert_2^2 \\
	\text{subject to} \quad &x \in x^0 + \mathcal{K}_{k+1}\big(\hat A^\top \hat A, \hat A^\top \hat{r}^0\big).
\end{aligned}
\]
This indicates that for general truncation parameters $\ell$ and probability spaces $(\Omega,\mathcal{F},\mathbf{P})$, our orthogonalized search direction framework \eqref{iter-xk+1-cg}--\eqref{iter-pk} inspires the construction of the novel subspace-constrained iterative-sketching-based Krylov (SC-IS-Krylov) subspace method.

\begin{remark}
	When $\ell\geq \tau:=\operatorname{rank}(A_{\mathcal{I}_r}P)$, Algorithm \ref{Algo-2} can exhibit finite-time convergence. 
	From the proof of Lemma \ref{lem-pkneq0}, we have $p^k\in\operatorname{Range}(PA_{\mathcal{I}_r}^\top)$ for all $k\geq0$. 
	Hence,  $\operatorname{span}\{p^{0},\ldots,p^{\tau-1}\}\subseteq\operatorname{Range}(PA_{\mathcal{I}_r}^\top)$. 
	Since the vectors $p^0, \ldots, p^{\tau-1}$ are mutually orthogonal and nonzero, the subspace $\operatorname{span}\{p^{0},\ldots,p^{\tau-1}\}$ has dimension $\tau$. 
	Therefore,
	$$\operatorname{span}\{p^{0},\ldots,p^{\tau-1}\}=\operatorname{Range}(PA_{\mathcal{I}_r}^\top).$$
	Note that $(A_{\mathcal{I}_r}P)^\dagger \hat{b}_{\mathcal{I}_r} \in \operatorname{Range}(PA_{\mathcal{I}_r}^\top)$, we have
	$$A^\dagger b=A_{\mathcal{I}_p}^\dagger b_{\mathcal{I}_p}+(A_{\mathcal{I}_r}P)^\dagger \hat{b}_{\mathcal{I}_r}\in A_{\mathcal{I}_p}^\dagger b_{\mathcal{I}_p}+\operatorname{Range}(PA_{\mathcal{I}_r}^\top)=x^0+\operatorname{span}\{p^{0},\ldots,p^{\tau-1}\}.$$
	By \eqref{iter-xk+1-cg2}, we can get $x^\tau = A^\dagger b$. 
	Thus, Algorithm \ref{Algo-2} terminates after exactly $\tau$ iterations in exact arithmetic. 
\end{remark}

\begin{remark}
	When $\mathcal{I}_p=\emptyset$, Algorithm \ref{Algo-2} simplifies to 
	\begin{equation*}
		\left\{
		\begin{array}{l}
			\delta_k=\lVert S_{k}^\top (Ax^{k}-b)\rVert_2^2/\lVert p^k\rVert_2^2,\\
			x^{k+1}=x^k+\delta_kp^k,\\
			d^{k+1}=-A^\top S_{k+1}S_{k+1}^\top (Ax^{k+1}-b),\\
			\eta_{k+1}^i=\langle d^{k+1},p^i\rangle/\lVert p^i\rVert_2^2,i=j_{k+1,\ell},\ldots,k,\\
			p^{k+1}=d^{k+1}-\sum_{i=j_{k+1,\ell}}^k\eta_{k+1}^ip^i.
		\end{array}\right.
	\end{equation*}
	This iteration scheme aligns with the iterative-sketching-based Krylov subspace (IS-Krylov) method proposed in \cite{sun2025connecting}.
\end{remark}

\section{Numerical experiments}\label{sec-5} 

In this section, we implement SC-IS-Krylov (Algorithm~\ref{Algo-2}). 
The case \(\ell=1\) corresponds to SCRIM (Algorithm~1) with \(\zeta=1\).  
We examine the five strategies discussed in Section~\ref{sec-3.2} for selecting the constraint set \(\mathcal{I}_p\). 
We also compare the SC-IS-Krylov method with the IS-Krylov method proposed in \cite{sun2025connecting}. 
All experiments are implemented in MATLAB R2024a under Windows 11 on a laptop computer equipped with an Intel Core Ultra 7 155H CPU and 32 GB memory. The code to reproduce our results can be found at \href{https://github.com/xiejx-math/SCRIM}{https://github.com/xiejx-math/SCRIM}.
\subsection{Numerical setup}

We consider two types of coefficient matrices in our experiments. 
The first type consists of synthetic Gaussian matrices with outlying singular values. 
Given parameters \(m,n\) and the target rank \(r\), we construct
\(
A=UDV^\top,
\)
where \(U\in\mathbb{R}^{m\times r}\) and \(V\in\mathbb{R}^{n\times r}\) have orthonormal columns, and \(D=\operatorname{diag}(\sigma_1,\ldots,\sigma_r)\) contains the prescribed nonzero singular values.
In MATLAB notation, the matrices \(U\) and \(V\) are generated by
\(
\texttt{[U,}\sim\texttt{]=qr(randn(m,r),0)}\), and \(
\texttt{[V,}\sim\texttt{]=qr(randn(n,r),0)}.
\)
The singular values are generated from three separated clusters. 
Let \(n_L\) and \(n_S\) denote the numbers of large and small outlying singular values, respectively, with \(n_L+n_S<r\). 
The remaining \(r-n_L-n_S\) singular values form the middle cluster. 
Specifically, the small, middle, and large singular values are sampled from three separated intervals
\[
R_S=[\beta_S,\gamma_S],\qquad
R_M=[\beta_M,\gamma_M],\qquad
R_L=[\beta_L,\gamma_L],
\]
respectively, where \(0<\beta_S\), \(\gamma_S<\beta_M\), and \(\gamma_M<\beta_L\). 
The sampled singular values are then sorted in nonincreasing order and used as the diagonal entries of \(D\). 
Given a parameter \(\kappa_M>1\), the middle interval is chosen such that
\(
\gamma_M/\beta_M\leq \kappa_M,
\)
so that the condition number of the middle singular-value cluster is bounded by \(\kappa_M\). 
We refer to such a matrix as an \((m,n,r,n_L,n_S,\kappa_M)\) matrix. 
When \(m=n=r\), this
construction coincides with the \((n,n_L,n_S,\kappa_M)\)
matrix model \cite[Definition~2.4]{amsel2026linear}. 
The second type consists of real-world matrices from the SuiteSparse Matrix Collection \cite{kolodziej2019suitesparse} and LIBSVM \cite{chang2011libsvm}. 
For these datasets, we use only the coefficient matrix \(A\).

We construct a consistent linear system by generating the ground-truth solution as
\(x^\ast=\texttt{randn(n,1)}\) and setting \(b=Ax^\ast\). 
We initialize SC-IS-Krylov with 
\(x^0=A_{\mathcal{I}_p}^{\dagger}b_{\mathcal{I}_p}\), 
and initialize the IS-Krylov method with \(x^0=0\). 
The iterative process is terminated when the squared relative solution error
\(
\operatorname{RSE}
=
\frac{\|x^k-A^\dagger b\|_2^2}{\|A^\dagger b\|_2^2}
\)
falls below a prescribed tolerance. 
In the following experiments, we employ the partition sampling \cite{Zeng2024adaptive,xie2025randomized,sun2025connecting}. 
Given the block size \(q\), we consider a random partition of \([m_r]\). Let
\(\varpi\) be a uniformly random permutation of \([m_r]\), and define
\begin{equation*}
	\begin{aligned}
		&\mathcal I_i
		=
		\{\varpi(k): k=(i-1)q+1,(i-1)q+2,\ldots,iq\},
		\qquad i=1,2,\ldots,t-1,\\
		&\mathcal I_t
		=
		\{\varpi(k): k=(t-1)q+1,(t-1)q+2,\ldots,m_r\},
		\qquad |\mathcal I_t|\leq q.
	\end{aligned}
\end{equation*}
We select the sampling matrix \(S^\top = I_{\mathcal I_i}\)
with probability
\(
\|(A_{\mathcal{I}_r})_{\mathcal I_i}\|_F^2/\|A_{\mathcal{I}_r}\|_F^2. 
\)
We use SC-IS-Krylov-PS and IS-Krylov-PS to denote the SC-IS-Krylov and IS-Krylov methods with partition sampling, respectively.
For a more comprehensive discussion of additional sketching strategies, we refer the reader to, e.g., \cite[Sections 8 and 9]{Martinsson2020Randomized}. 
In practice, the quantity \( \|S_k^\top (A_{\mathcal{I}_r} x^k - b_{\mathcal{I}_r})\|_2 \) is considered zero if it is smaller than \texttt{eps}. Each experiment is repeated over \(20\) independent trials to ensure statistical reliability. The lightly shaded region indicates the range from the minimum to
the maximum values, while the darker shaded region corresponds to the interval between the 25-th and 75-th quantiles.
\subsection{Comparison of selection strategies for \(\mathcal I_p\)}
Recent works \cite{amsel2026linear,Derezinski2023SolvingDL,musco2017spectrum} indicate that the convergence behavior of iterative methods for linear systems is closely related to the distribution of singular values, rather than to the classical condition number alone.
 Motivated by this observation, we first examine
the singular value distribution of \(A_{\mathcal I_r}P\) under five selection strategies for
constructing \(\mathcal I_p\), namely CPQR, SVD, SqNorm, SkCPQR and RBRP.  

For the synthetic matrices $A=UDV^\top$, \(U\) is obtained from the thin QR factorization
\(G=UR\) of a Gaussian random matrix \(G\in\mathbb R^{m\times r}\). 
For any index set \(\mathcal I_p\) with
\(|\mathcal I_p|=m_p\le r\), the submatrix \(G_{\mathcal I_p}\in\mathbb R^{m_p\times r}\) has full row rank
with probability $1$ \cite[proof of Theorem~10.5]{halko2011finding}. 
Since \(R\) is nonsingular with probability $1$,
it follows from \(G_{\mathcal I_p}=U_{\mathcal I_p}R\) that
\(
\operatorname{rank}(U_{\mathcal I_p})
=
\operatorname{rank}(G_{\mathcal I_p})
=
m_p . 
\)
Moreover, since \(D\) is nonsingular and \(V^\top\) has full row rank,
\(DV^\top\) has full row rank. 
Therefore, from $A_{\mathcal I_p}=U_{\mathcal I_p}DV^\top$, we obtain
\(
\operatorname{rank}(A_{\mathcal I_p})=\operatorname{rank}(U_{\mathcal I_p})
=m_p
\)
with probability $1$. 
In our experiments, all selected matrices \(A_{\mathcal I_p}\) are observed to have
full row rank, which implies that $r_p:=\operatorname{rank}(A_{\mathcal{I}_p})=m_p$. 
By Lemma~\ref{lem-interlacing}, the nonzero singular values of \(A_{\mathcal I_r}P\) satisfy
\begin{equation*}
	\sigma_{i+m_p}(A)
	=
	\sigma_{i+r_p}(A)
	\leq
	\sigma_i(A_{\mathcal{I}_r}P)
	\leq
	\sigma_i(A),\ i=1,\ldots,r-m_p.
\end{equation*}
Figure~\ref{ex_fig_1} illustrates these interlacing bounds for different values of \(m_p\). 
The nonzero singular values of \(A_{\mathcal I_r}P\) lie within the shaded region bounded above by \(\sigma_i(A)\) and below by  \(\sigma_{i+m_p}(A)\). 
Compared with the original matrix \(A\), the reduced matrix \(A_{\mathcal I_r}P\) has a more concentrated singular-value distribution.

We next compare the five SC-IS-Krylov-PS variants obtained from the selection strategies introduced in Section~\ref{sec-3.2}, namely SVD-IS-Krylov-PS, SqNorm-IS-Krylov-PS, CPQR-IS-Krylov-PS, SkCPQR-IS-Krylov-PS, and RBRP-IS-Krylov-PS.
Figure~\ref{ex_fig_2} shows the RSE with respect to the number of iterations and CPU time under $m_p=150,300$ and $450$. 
It can be seen from Figure~\ref{ex_fig_2} that the five strategies require comparable numbers of iterations. 
However, SVD-IS-Krylov-PS, CPQR-IS-Krylov-PS and RBRP-IS-Krylov-PS incur higher CPU time compared to the other methods, mainly due to the auxiliary matrix constructions, factorizations, and adaptive
updates required in their row-selection procedures. 
SkCPQR-IS-Krylov-PS reduces the CPU time compared
with CPQR-IS-Krylov-PS by performing pivot selection in a lower-dimensional sketched space. 
Indeed, SqNorm-IS-Krylov-PS is still faster across our experiments, since it only requires squared-row-norm sampling and avoids QR-type factorizations. 
Therefore, we focus our subsequent tests on the SqNorm-IS-Krylov-PS method.

\begin{figure}
	\centering
	\includegraphics[width=0.32\linewidth]{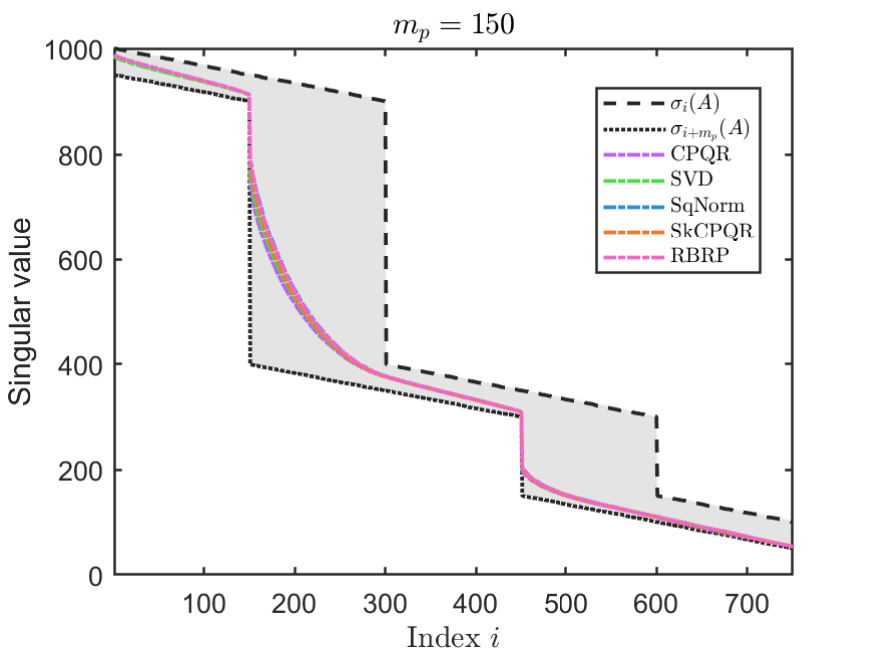}
	\includegraphics[width=0.32\linewidth]{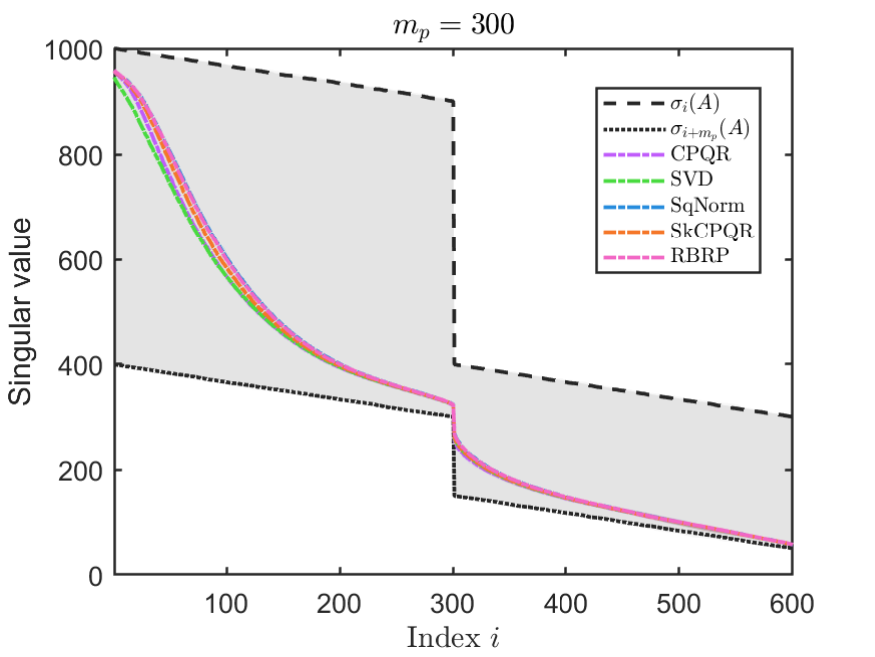}	\includegraphics[width=0.32\linewidth]{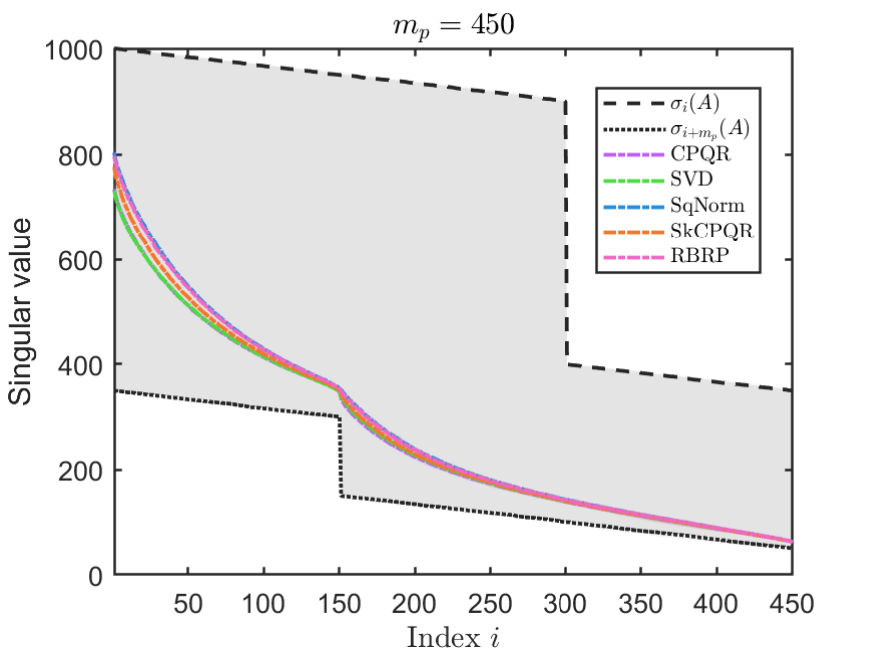}
	\caption{Figures illustrate the singular value distribution of $A_{\mathcal{I}_r}P$ and $A$. The title of each subplot indicates the corresponding values of $m_p$.  The coefficient matrices are generated as \((5000,1000,900,300,300,2)\) matrices with
		\(R_L=[900,1000]\), \(R_M=[300,400]\), and \(R_S=[50,150]\).}
	\label{ex_fig_1}
\end{figure}

\begin{figure}
	\centering
	\includegraphics[width=0.32\linewidth]{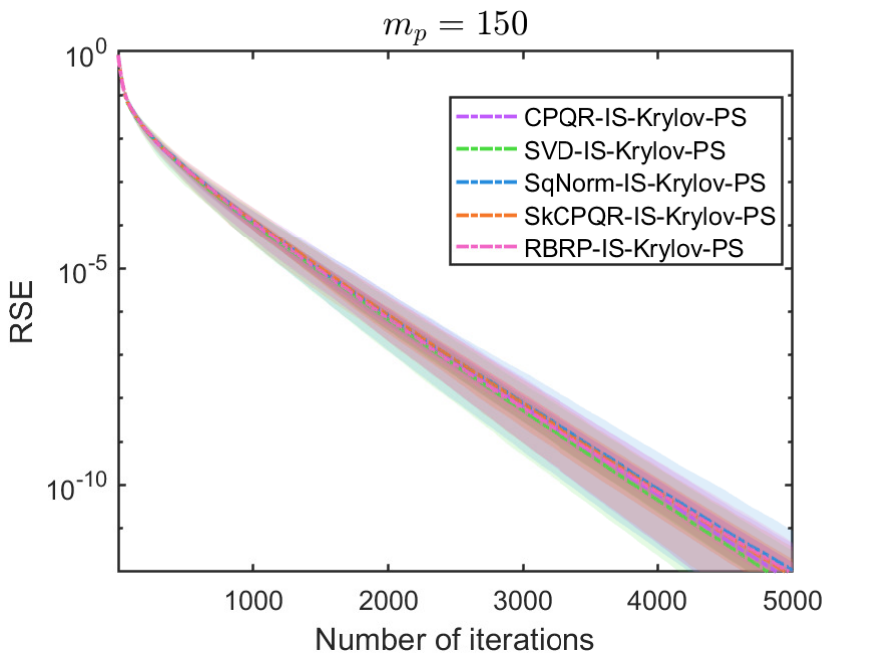}
	\includegraphics[width=0.32\linewidth]{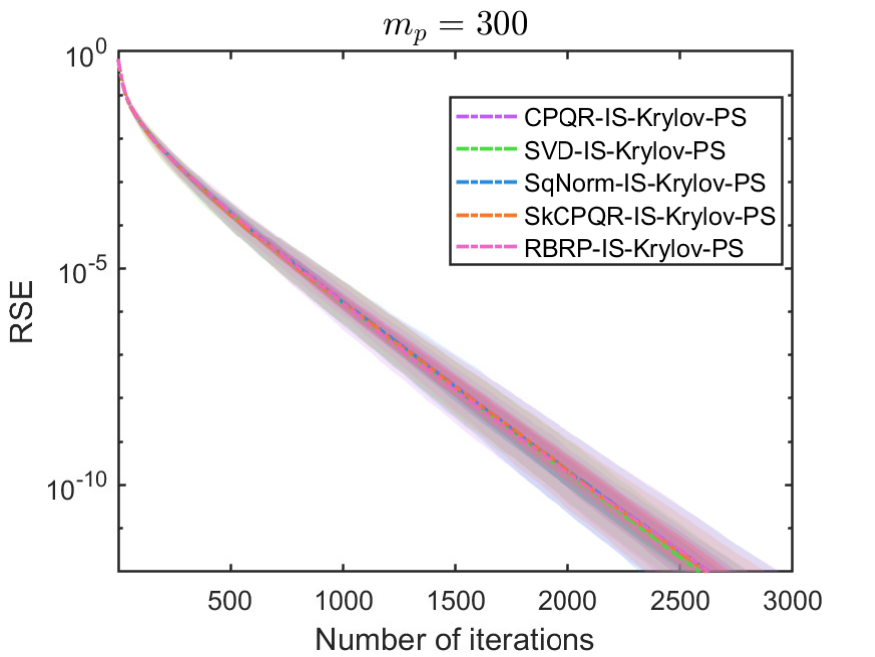}	\includegraphics[width=0.32\linewidth]{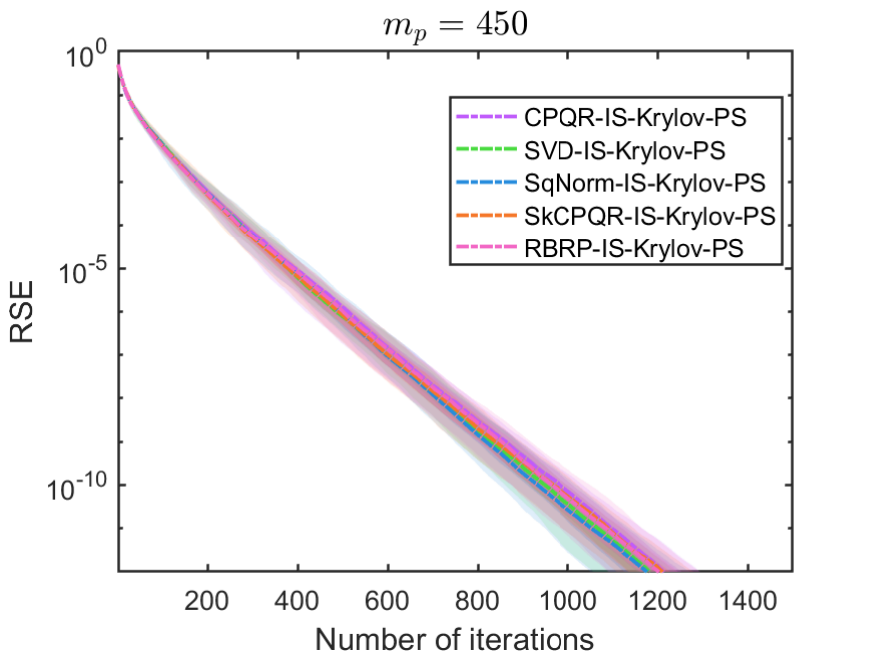}	\includegraphics[width=0.32\linewidth]{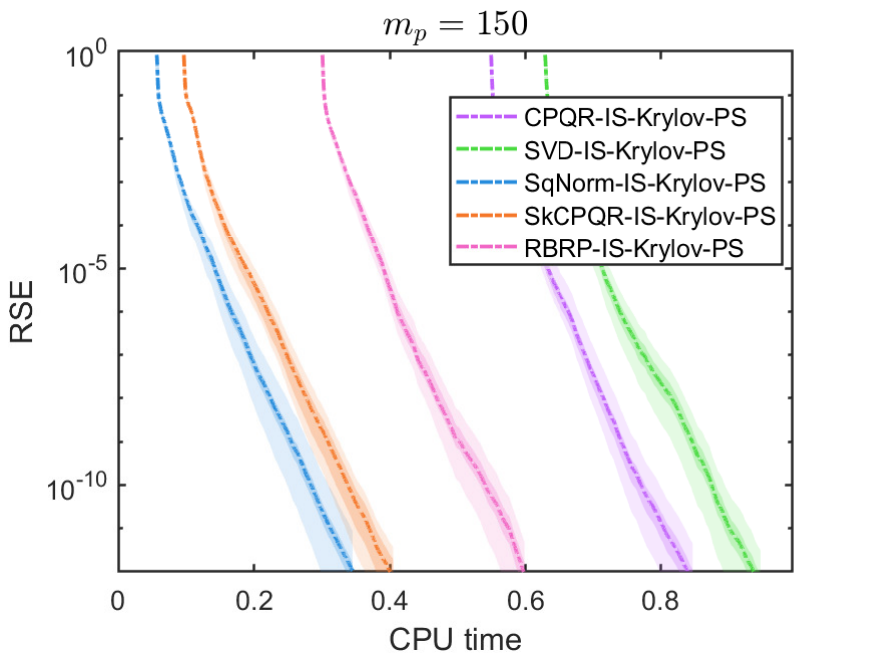}
	\includegraphics[width=0.32\linewidth]{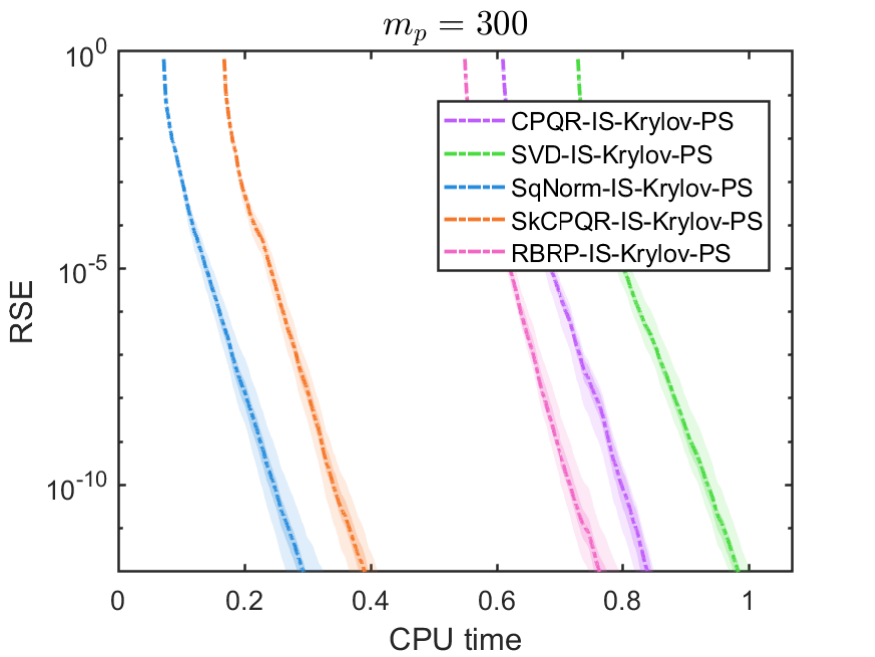}
	\includegraphics[width=0.32\linewidth]{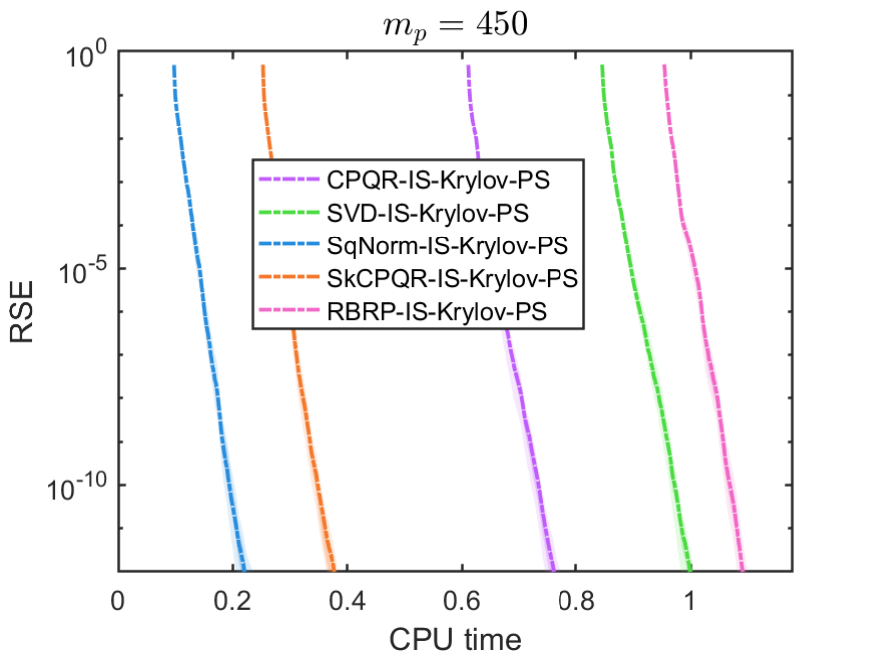}
	\caption{Figures depict the evolution of RSE with respect to the number of iterations (top) and the CPU time (bottom). The title of each subplot indicates the corresponding values of $m_p$.  
	The coefficient matrices are generated as \((5000,1000,900,300,300,2)\) matrices with
	\(R_L=[900,1000]\), \(R_M=[300,400]\), and \(R_S=[50,150]\). 	
	The other parameters are fixed as $q=50$ and $\ell=10$.}
	
	\label{ex_fig_2}
\end{figure}

\subsection{Choice of parameters \(q\), \(\ell\), and \(m_p\)}
In this subsection, we investigate the influence of the block size \(q\), 
the truncation parameter \(\ell\), and the number of selected constraint rows 
\(m_p=|\mathcal I_p|\) on the convergence behavior of SqNorm-IS-Krylov-PS. 
We note that the case \(\ell=1\) corresponds to Algorithm~\ref{Algo-1} with the SqNorm selection strategy and partition sampling, denoted by SqNorm-RIM-PS. 
The performance is measured by both the CPU time and the number of full iterations $k\cdot \frac{q}{m_r}$, where $m_r=m-m_p$. 

It can be seen from the first row of Figure~\ref{ex_fig_3} that, for fixed values of 
\(m_p\) and \(q\), increasing the truncation parameter \(\ell\) reduces the number of full iterations. 
In particular, the case \(\ell=1\), which corresponds to SqNorm-RIM-PS, requires more full iterations than the cases \(\ell>1\). 
Moreover, for fixed \(q\) and \(\ell\), increasing \(m_p\) generally leads to fewer full iterations, indicating that using more selected constraint rows can improve the convergence behavior.
In terms of CPU time, for a fixed \(m_p\), the SqNorm-RIM-PS method  first becomes faster and then slower as the block size \(q\) increases. 
For the SqNorm-IS-Krylov-PS method with \(\ell=2,5,10,\) and \(30\), increasing \(\ell\) generally reduces the CPU time, which shows the benefit of using more orthogonalized historical search directions. 
Moreover, for a fixed \(\ell\), increasing \(m_p\) also decreases the CPU time, which is consistent with the reduction in the number of full iterations. 
These observations indicate that SqNorm-IS-Krylov-PS can substantially improve the efficiency of the basic SqNorm-RIM-PS method. 
When both CPU time and the number of full iterations are taken into account, configurations with \(q=32\) or \(64\), \(\ell=10\) or \(30\), and \(m_p=16\) or \(64\) provide favorable performance, achieving a desirable balance between convergence speed, computational cost, and storage cost.
\begin{figure}
	\centering
	\includegraphics[width=0.32\linewidth]{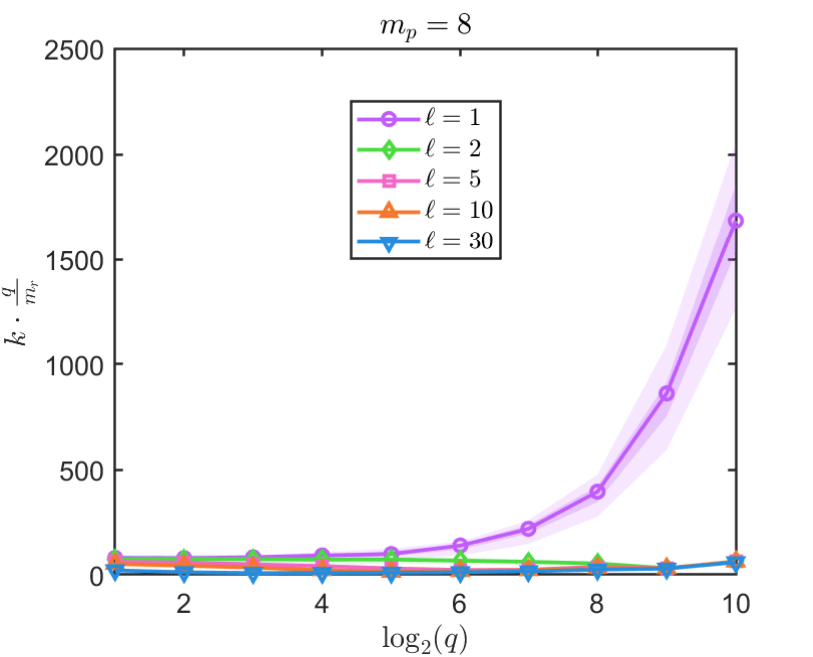}
	\includegraphics[width=0.32\linewidth]{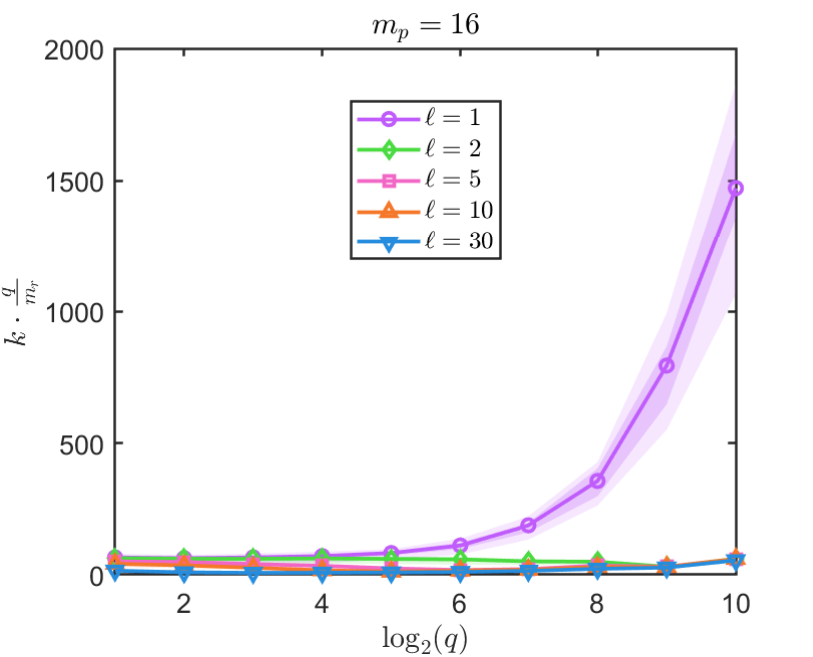}
	\includegraphics[width=0.32\linewidth]{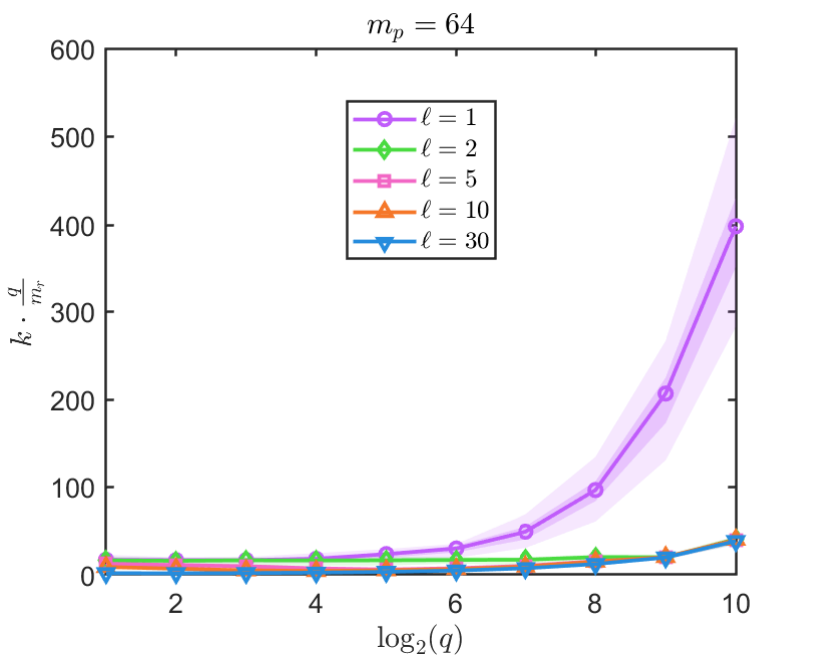}\\
	\includegraphics[width=0.32\linewidth]{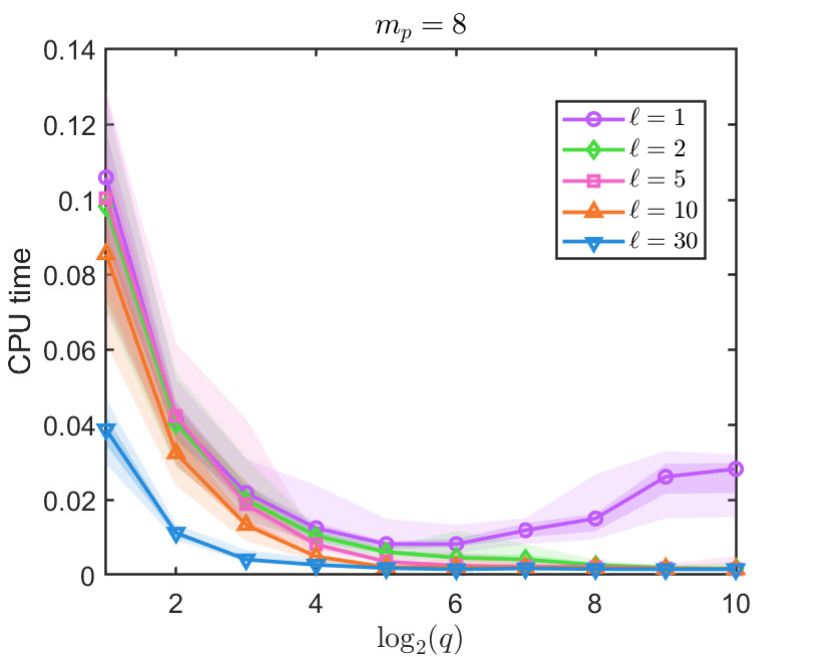}
	\includegraphics[width=0.32\linewidth]{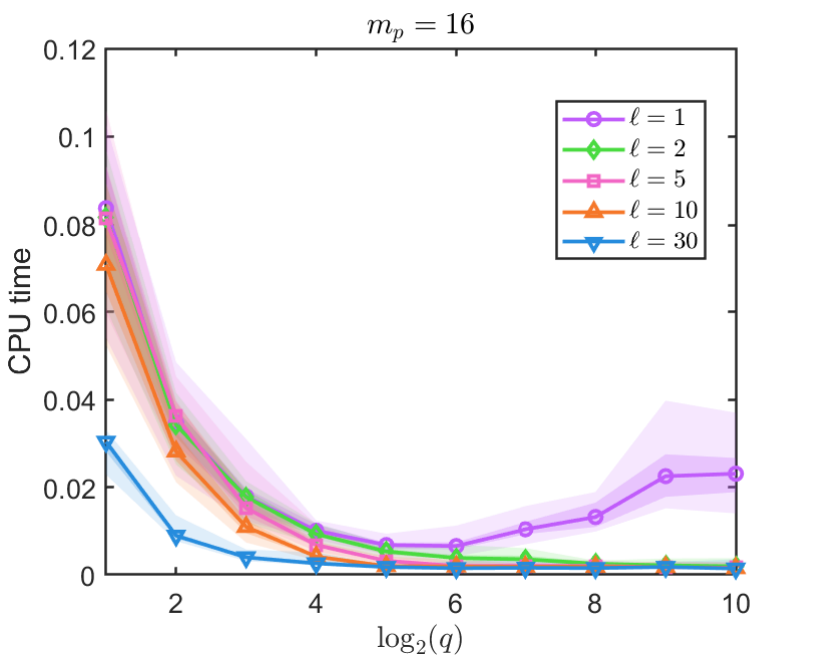}
	\includegraphics[width=0.32\linewidth]{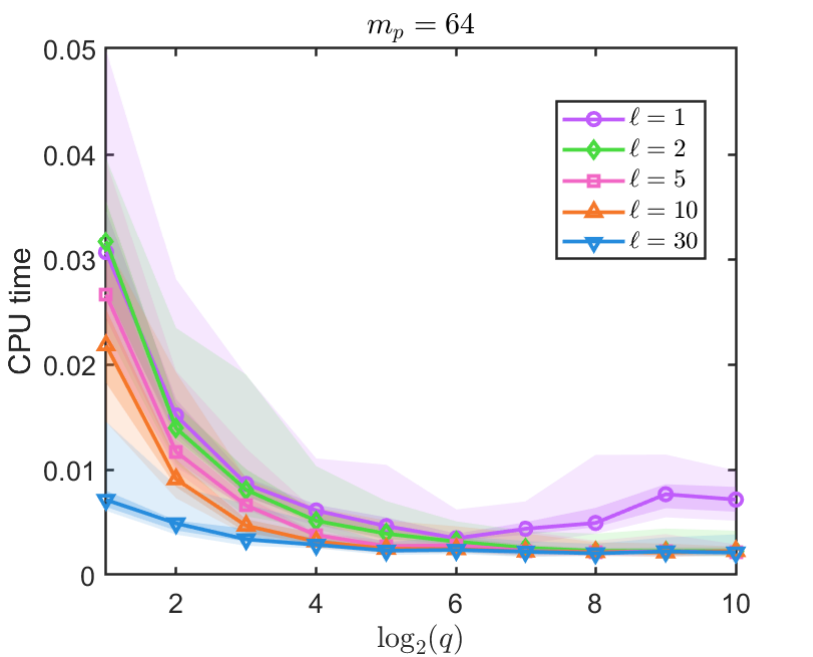}\\
	\caption{Performance of SqNorm-IS-Krylov-PS with different values of \(q\), \(\ell\), and \(m_p\). The top row shows the number of full iterations \(k\cdot \frac{q}{m_r}\), and the bottom row shows the CPU time. Each subplot title indicates the corresponding values of \(m_p\). 
	The coefficient matrices are generated as \((1024,128,128,16,16,2)\) matrices with
	\(R_L=[900,1000]\), \(R_M=[300,400]\), and \(R_S=[50,150]\).} 
	\label{ex_fig_3}
\end{figure}

\subsection{Comparison with IS-Krylov-PS}
We next compare SqNorm-IS-Krylov-PS with its non-preconditioned counterpart,
namely the IS-Krylov-PS method. 
Table~\ref{tab:1} and Figure~\ref{ex_fig_4} report the number of iterations and CPU time of the two
methods on real-world matrices from the SuiteSparse Matrix
Collection~\cite{kolodziej2019suitesparse} and LIBSVM~\cite{chang2011libsvm}.
The SuiteSparse test set consists of \texttt{bibd\_16\_8}, \texttt{crew1},
\texttt{WorldCities}, \texttt{lp\_ship04s}, \texttt{cr42}, \texttt{D\_6},
\texttt{D\_7}, \texttt{rel6}, \texttt{mk12-b2}, and \texttt{abtaha2}. 
The LIBSVM test set consists of \texttt{a9a}, \texttt{aloi},
and \texttt{protein}. These matrices cover full-rank and rank-deficient cases,
as well as overdetermined and underdetermined systems.

Table~\ref{tab:1} shows that SqNorm-IS-Krylov-PS requires fewer iterations than IS-Krylov-PS for all SuiteSparse test matrices. 
This indicates that the subspace-constrained preconditioning process effectively accelerates the convergence of IS-Krylov-PS.
In most cases, the reduced iteration count also leads to a smaller CPU time. 
The only exception is \texttt{rel6}, where the additional preconditioning and
projection costs offset the savings from fewer iterations. 
Figure~\ref{ex_fig_4} reports the corresponding results on the LIBSVM datasets.
For all three datasets, SqNorm-IS-Krylov-PS requires fewer iterations and less
CPU time than the IS-Krylov-PS method.
\begin{table}[h]
	\centering
	\caption{Average number of iterations and CPU time of IS-Krylov-PS and SqNorm-IS-Krylov-PS for matrices from the SuiteSparse Matrix Collection~\cite{kolodziej2019suitesparse}. All computations are terminated once RSE$<10^{-12}$.}
	\begingroup
\small
\setlength{\tabcolsep}{3pt}
		\begin{tabular}{ccccc cccc}
			\toprule
			\multirow{2}{*}{Matrix} & \multirow{2}{*}{$m$} & \multirow{2}{*}{$n$} & \multirow{2}{*}{rank} & \multirow{2}{*}{$m_p$}
			& \multicolumn{2}{c}{IS-Krylov-PS} & \multicolumn{2}{c}{SqNorm-IS-Krylov-PS} \\
			\cmidrule(r){6-7} \cmidrule(r){8-9}
			& & & & & Iter & CPU & Iter & CPU \\
			\midrule
			\texttt{bibd}\_16\_8 & 120   & 12870 & 120  & 50  & 148.35    & 0.0307 & 30.85    & \textbf{0.0250} \\
			\texttt{crew1}       & 135   & 6469  & 135  & 50  & 315.70    & 0.0347 & 90.45    & \textbf{0.0265} \\
			\texttt{WorldCities} & 315   & 100   & 100  & 20  & 551.20    & 0.0024 & 248.25   & \textbf{0.0020} \\
			\texttt{lp\_ship04s} & 402   & 1506  & 360  & 100 & 65783.45  & 1.8168 & 5654.05  & \textbf{0.2912} \\
			\texttt{cr42}        & 905   & 1513  & 905  & 200 & 105582.75 & 3.2831 & 22208.10 & \textbf{1.8309} \\
			\texttt{D\_6}        & 970   & 435   & 339  & 50  & 1167.05   & 0.0148 & 565.10   & \textbf{0.0134} \\
			\texttt{D\_7}        & 1270  & 971   & 629  & 100 & 41135.05  & 0.8451 & 20300.45 & \textbf{0.7593} \\
			\texttt{rel6}        & 2340  & 157   & 137  & 50  & 1154.25   & \textbf{0.0055} & 652.25 & 0.0061 \\
			\texttt{mk12-b2}     & 13860 & 1485  & 1420 & 50  & 1335.90   & 0.3377 & 1288.00  & \textbf{0.3354} \\
			\texttt{abtaha2}     & 37932 & 331   & 331  & 150 & 4350.35   & 0.1158 & 1204.75  & \textbf{0.1043} \\
			\bottomrule
		\end{tabular}
\endgroup
	\label{tab:1}
\end{table}
\begin{figure}
	\centering
	\includegraphics[width=0.32\linewidth]{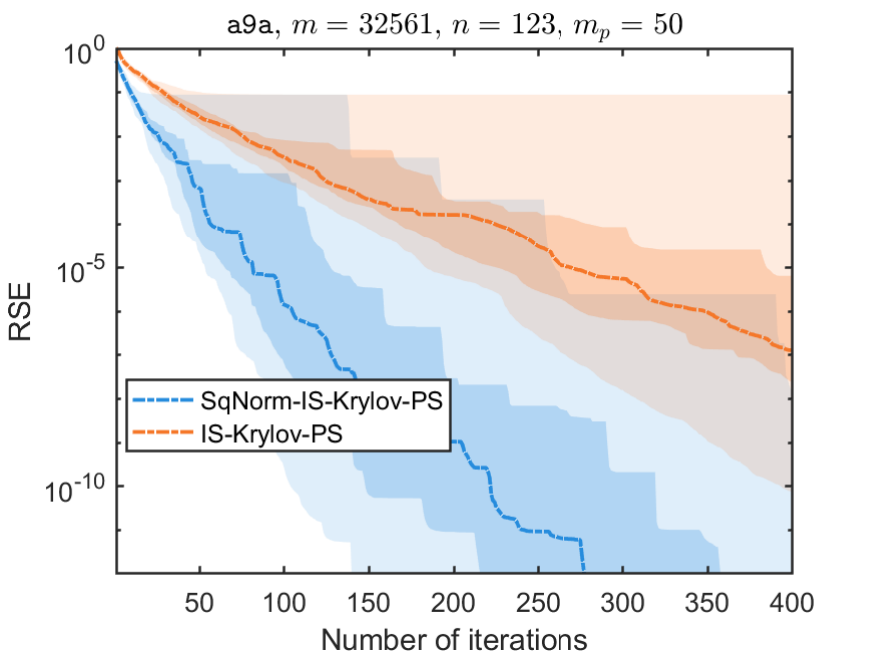}
	\includegraphics[width=0.32\linewidth]{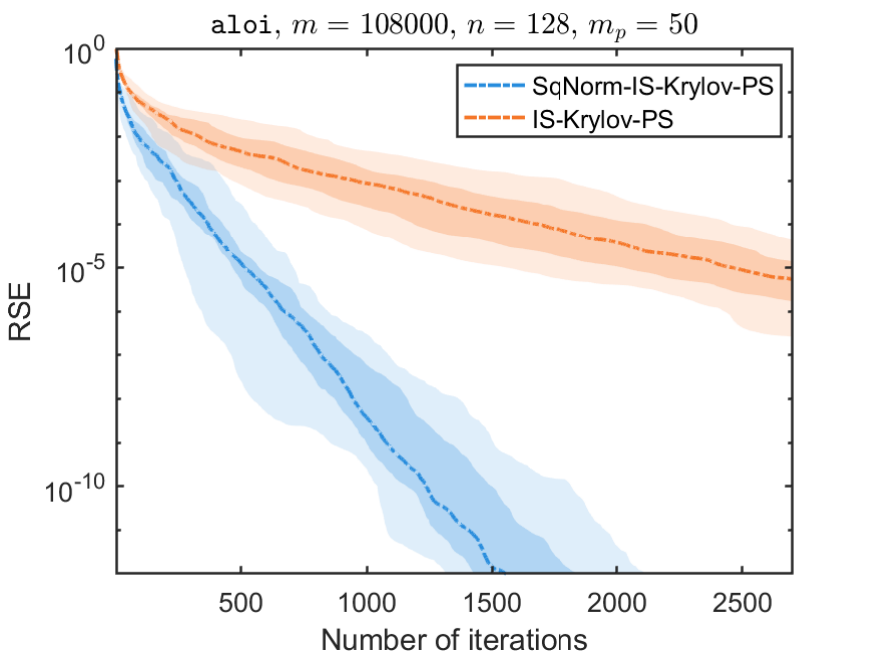}
	\includegraphics[width=0.32\linewidth]{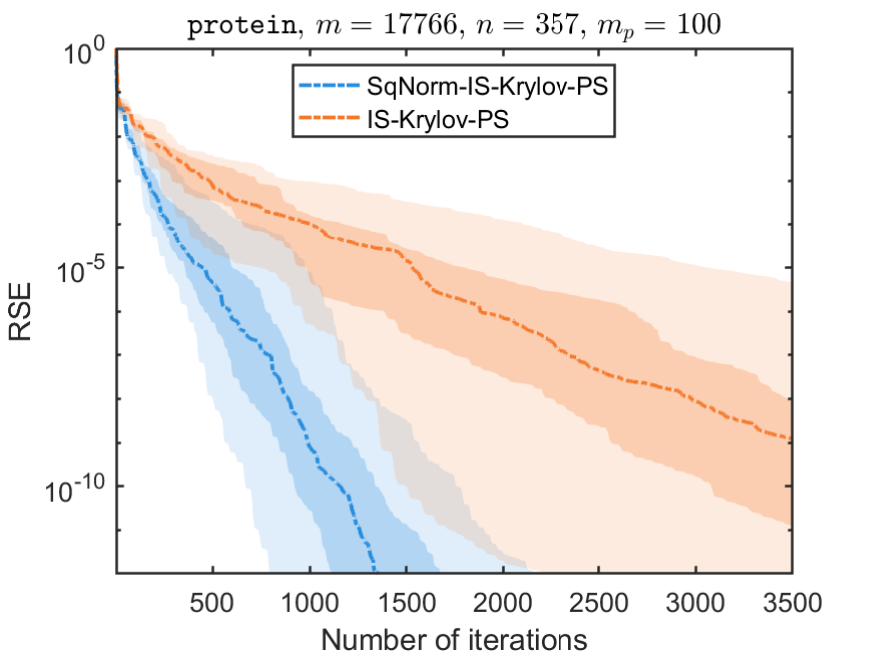}\\
	\includegraphics[width=0.32\linewidth]{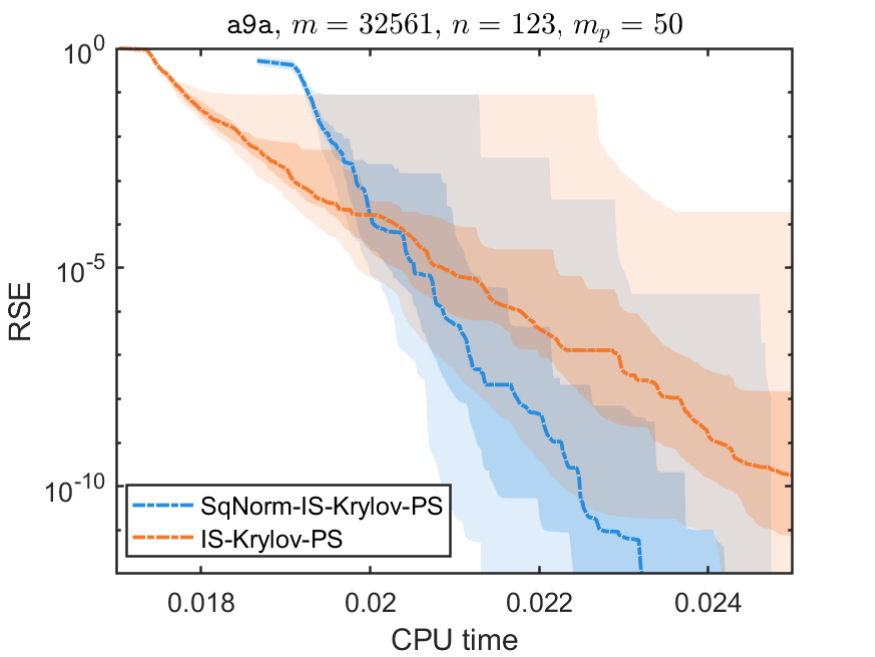}
	\includegraphics[width=0.32\linewidth]{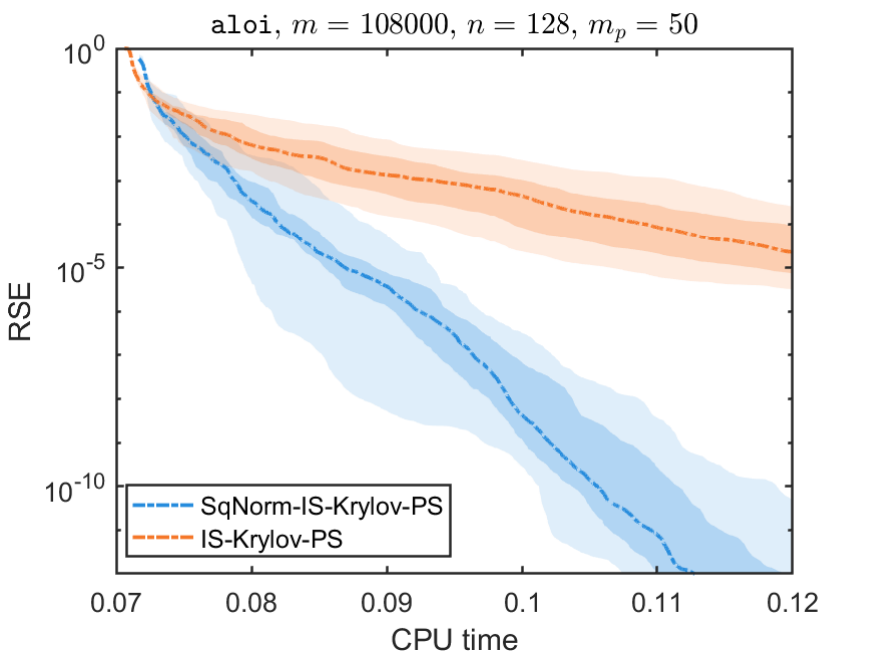}
	\includegraphics[width=0.32\linewidth]{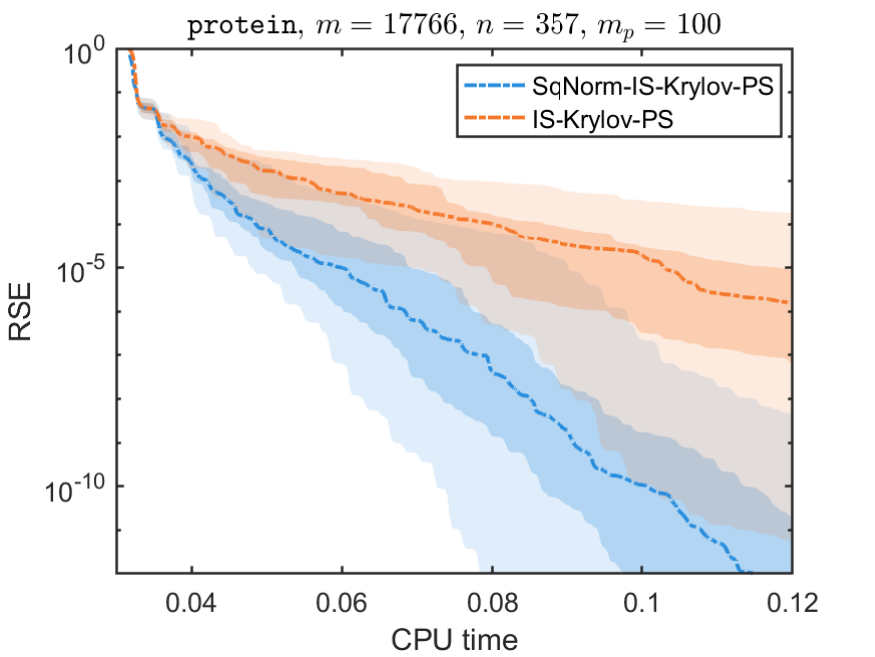}
	\caption{Performance of IS-Krylov-PS and SqNorm-IS-Krylov-PS for linear systems with coefficient matrices from LIBSVM~\cite{chang2011libsvm}. Figures depict the evolution of RSE with respect to the number of iterations and the CPU time. Each plot title indicates the dataset name and data dimensions. We set \(q=300\) and \(\ell=10\).}
	\label{ex_fig_4}
\end{figure}

\subsection{Comparison with \texttt{pinv} and \texttt{lsqminnorm}}
We compare IS-Krylov-PS and SqNorm-IS-Krylov-PS with the MATLAB built-in
functions \texttt{pinv} and \texttt{lsqminnorm}. We generate full-column-rank
matrices \(A\in\mathbb R^{m\times n}\) and vary the number of rows \(m\), while
keeping the number of columns fixed. The exact solution is generated as
\(x^*=\texttt{randn(n,1)}\), and the right-hand side is set to \(b=Ax^*\).
The iterative methods are terminated when their solution accuracy reaches the
accuracy level of the MATLAB built-in solvers.


Figure~\ref{ex_fig_5} compares the CPU time of \texttt{pinv},
\texttt{lsqminnorm}, IS-Krylov-PS, and SqNorm-IS-Krylov-PS as the number of rows
\(m\) increases. 
The results show that the CPU time of \texttt{pinv} increases rapidly with \(m\), especially in the larger-scale settings. 
The function \texttt{lsqminnorm} is competitive for small values of \(m\), but its CPU time increases more noticeably as \(m\) grows. 
For larger \(m\), SqNorm-IS-Krylov-PS generally requires less CPU time than \texttt{lsqminnorm}. 
Moreover, SqNorm-IS-Krylov-PS consistently improves upon IS-Krylov-PS in all three settings, which further demonstrates the benefit of the subspace-constrained preconditioning.

\begin{figure}
	\centering
	\includegraphics[width=0.32\linewidth]{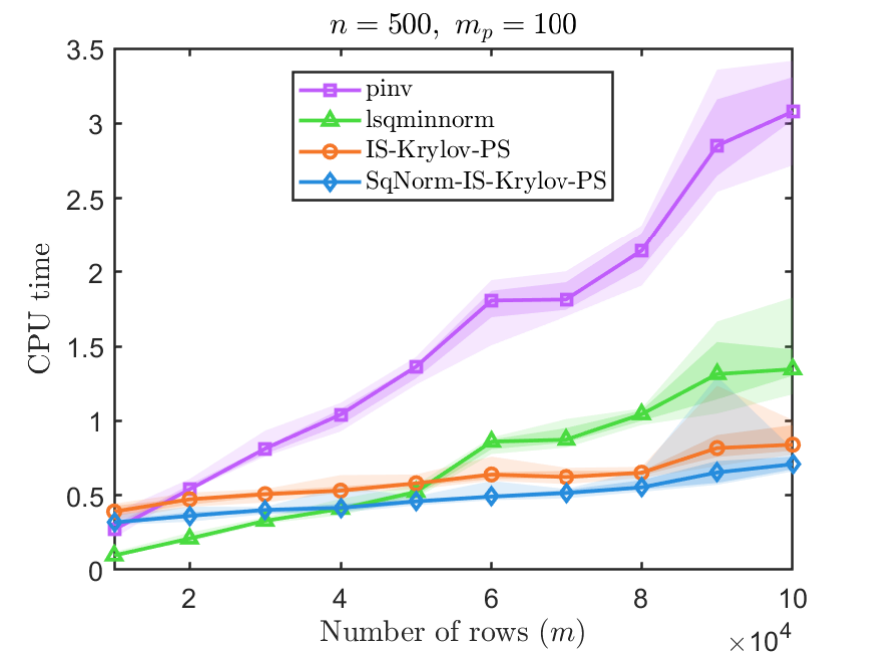}
	\includegraphics[width=0.32\linewidth]{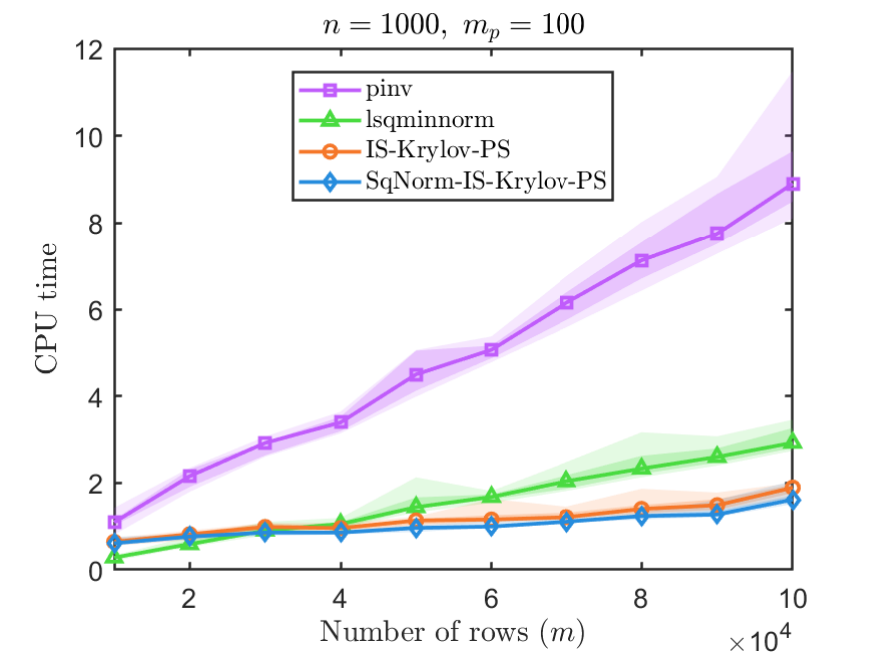}
	\includegraphics[width=0.32\linewidth]{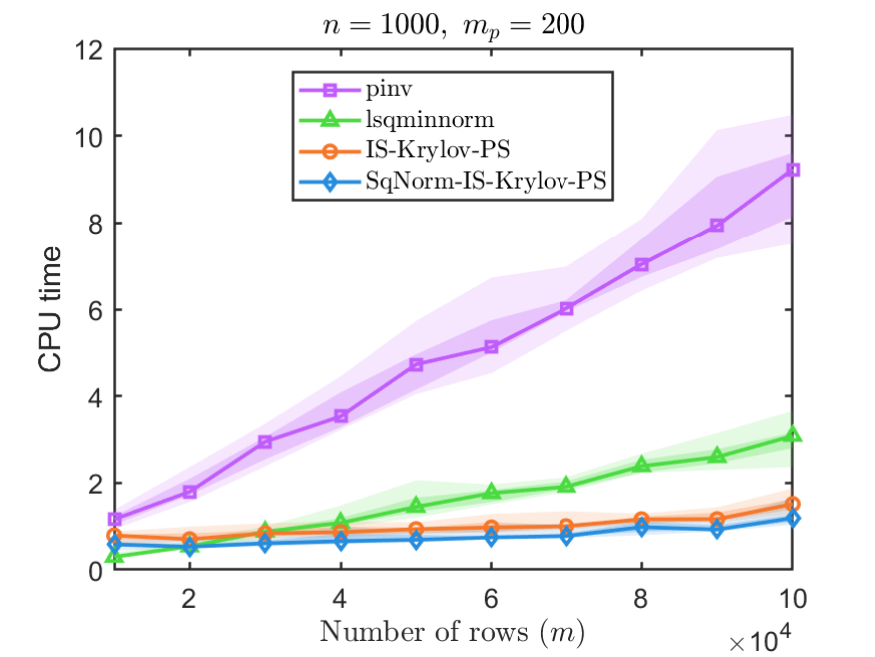}
\caption{The figures illustrate the evolution of CPU time with respect to the number of rows \(m\). The title of each subplot indicates the corresponding values of \(n\) and \(m_p\). 
	The coefficient matrices are generated as \((m,n,n,100,100,2)\) matrices with
	\(R_L=[900,1000]\), \(R_M=[300,400]\), and \(R_S=[50,150]\). The other parameters are fixed as \(\ell=50\) and \(q=30\).} 
	\label{ex_fig_5}
\end{figure}

\section{Concluding remarks}\label{sec-6}
 
This paper further investigated the subspace-constrained preconditioning technique based on a QR-like factorization and verified its applicability to general linear systems, including underdetermined, overdetermined, full-rank, and rank-deficient matrices. A projected randomized iterative method was subsequently adopted to solve the preconditioned linear systems, and it was confirmed that the computational cost could be effectively reduced when the initial point is chosen as a solution of the subsystem \(A_{\mathcal{I}_p} x = b_{\mathcal{I}_p}\). Furthermore, drawing on the principles of minimal error methods and the orthogonal direction method, the proposed subspace-constrained iterative framework was further accelerated, giving rise to a novel subspace-constrained iterative-sketching-based Krylov subspace (SC-IS-Krylov) method. Theoretical analysis showed that the proposed method achieves linear convergence in expectation. Numerical experiments validated our theoretical results and demonstrated the superior computational efficiency of the proposed method.

There are still many possible future avenues of research. Since linear systems arising in practical problems are often inconsistent due to noise \cite{zouzias2013randomized,zeng2025adaptive,zeng2026randomizedcg}, extending the subspace-constrained preconditioning technique to such cases is a valuable direction. The randomized sparse Kaczmarz method proposed in \cite{Schopfer2019Linear,zeng2026stochastic} has been recognized as an effective strategy for obtaining sparse solutions to linear systems. A potential avenue for future research could be the extension of the subspace-constrained preconditioning technique to address sparse recovery problems.

\bibliographystyle{abbrv}
\bibliography{ref}

\section{Appendix. Proof of the main results}
\label{sec-7}
\subsection{Proof of Lemma \ref{lem-interlacing}}
We first introduce the Poincar\'e separation theorem, which provides an interlacing inequality for eigenvalues.
\begin{theorem}[Theorem 1,\cite{scott1985separation}]\label{thm-poincare}
	Let $B\in\mathbb{R}^{n\times n}$ be a symmetric matrix, and let $Q\in\mathbb{R}^{n\times t}$ satisfy $Q^\top Q = I_t$. Then
	\begin{equation}\label{ineq-poincare}
		\lambda_{i+n-t}(B) \leq \lambda_i(Q^\top BQ) \leq \lambda_i(B), \quad i=1,\ldots,t, 
	\end{equation}
	where $\lambda_i(\cdot)$ denotes the $i$-th largest eigenvalue. 
\end{theorem}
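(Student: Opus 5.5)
The plan is to derive both inequalities in \eqref{ineq-poincare} from the Courant--Fischer min-max characterization of eigenvalues. The only tool needed is that $Q$ is an isometric embedding of $\mathbb{R}^t$ into $\mathbb{R}^n$. Set $C:=Q^\top BQ\in\mathbb{R}^{t\times t}$, which is symmetric. For every $u\in\mathbb{R}^t$ we have $\|Qu\|_2=\|u\|_2$ (because $Q^\top Q=I_t$) and $u^\top Cu=(Qu)^\top B(Qu)$. Moreover, for any subspace $U\subseteq\mathbb{R}^t$, the image $QU$ is a subspace of $\mathbb{R}^n$ with $\dim(QU)=\dim U$, since $Q$ is injective. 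Hence every Rayleigh quotient of $C$ over a subspace $U$ is a Rayleigh quotient of $B$ over the subspace $QU$ of the same dimension.

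For the upper bound I will use the max-min form
\[
\lambda_i(C)=\max_{\dim U=i}\ \min_{u\in U,\ \|u\|_2=1}u^\top Cu .
\]
Replacing $u$ by $w=Qu$ rewrites the inner minimum as the minimum of $w^\top Bw$ over unit vectors $w\in QU$. The family of $i$-dimensional subspaces $\{QU\}$ is only a subfamily of all $i$-dimensional subspaces of $\mathbb{R}^n$. Enlarging the outer maximum to all such subspaces therefore gives $\lambda_i(C)\le\lambda_i(B)$.

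For the lower bound I will use the dual min-max form
\[
\lambda_i(C)=\min_{\dim U=t-i+1}\ \max_{u\in U,\ \|u\|_2=1}u^\top Cu .
\]
The same substitution turns this into a minimum over the restricted family of $(t-i+1)$-dimensional subspaces $QU\subseteq\mathbb{R}^n$. Enlarging the family can only decrease the minimum, so
\[
\lambda_i(C)\ \ge\ \min_{\dim W=t-i+1,\ W\subseteq\mathbb{R}^n}\ \max_{w\in W,\ \|w\|_2=1}w^\top Bw .
\]
By Courant--Fischer for $B$, the right-hand side equals $\lambda_j(B)$ with $n-j+1=t-i+1$, that is, $j=i+n-t$. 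This yields $\lambda_{i+n-t}(B)\le\lambda_i(C)$.

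The only delicate point is this index bookkeeping in the lower bound: matching the subspace dimension $t-i+1$ to the correct eigenvalue index $i+n-t$ of $B$, and noting that $1\le i\le t$ guarantees $1\le i+n-t\le n$. An alternative route would be to complete $Q$ to an orthogonal matrix $[Q,\,Q_\perp]$, observe that $C$ is the leading $t\times t$ principal submatrix of the orthogonally similar matrix $[Q,\,Q_\perp]^\top B[Q,\,Q_\perp]$, and iterate Cauchy's interlacing theorem $n-t$ times. I would present the min-max argument, since it is self-contained.
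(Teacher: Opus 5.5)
Your proof is correct. Both inequalities follow from the two Courant--Fischer characterizations exactly as you describe. Your index bookkeeping $n-j+1=t-i+1\Rightarrow j=i+n-t$ is right, and $1\le i+n-t\le n$ holds because $Q^\top Q=I_t$ forces $t\le n$.

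There is no in-paper argument to compare against: the paper does not prove this statement but imports it as Theorem~1 of the cited reference. It uses the result only once, in the proof of Lemma~\ref{lem-interlacing}, with $B=A^\top A$ and $Q$ an orthonormal basis of $\operatorname{Null}(A_{\mathcal{I}_p})$. There $t=n-r_p$, so the lower index $i+n-t$ becomes $i+r_p$.

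Your min-max argument is fully self-contained and covers that use. The alternative you mention (embedding $Q^\top BQ$ as a principal submatrix of an orthogonally similar matrix and applying Cauchy interlacing $n-t$ times) is equivalent but relies on Cauchy interlacing as a black box. The Courant--Fischer route is the more direct choice.
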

\begin{proof}[Proof of Lemma \ref{lem-interlacing}]
	We first prove that $\operatorname{rank}(A_{\mathcal{I}_r}P)=\operatorname{rank}(A)- \operatorname{rank}(A_{\mathcal{I}_p})=r-r_p$. 
	From  $A=L\hat{A}$ in \eqref{eq-Atop-QR-like} with invertible $L$, it holds that
	$\operatorname{Range}(A^\top)=\operatorname{Range}(\hat{A}^\top) = \operatorname{Range}(A_{\mathcal{I}_p}^\top) + \operatorname{Range}(PA_{\mathcal{I}_r}^\top)$. 
	Hence, it suffices to show that $\operatorname{Range}(A_{\mathcal{I}_p}^\top) \cap \operatorname{Range}(PA_{\mathcal{I}_r}^\top) = \{0\}$. For any $x \in \operatorname{Range}(A_{\mathcal{I}_p}^\top) \cap \operatorname{Range}(PA_{\mathcal{I}_r}^\top)$, there exist $\xi \in \mathbb{R}^{m_p}$ and $\eta \in \mathbb{R}^{m_r}$ such that $x = A_{\mathcal{I}_p}^\top \xi = PA_{\mathcal{I}_r}^\top \eta$. 
	Consequently, it holds that
	$$
	\|x\|_2^2 = \langle A_{\mathcal{I}_p}^\top \xi, PA_{\mathcal{I}_r}^\top \eta \rangle = \langle \xi, A_{\mathcal{I}_p} P A_{\mathcal{I}_r}^\top \eta \rangle = 0,
	$$
	which implies that $x = 0$. Since \(x\) is arbitrary, we conclude that $\operatorname{Range}(A_{\mathcal{I}_p}^\top) \cap \operatorname{Range}(PA_{\mathcal{I}_r}^\top) = \{0\}$, which implies that $\operatorname{rank}(A)=\operatorname{rank}(A_{\mathcal{I}_r}P)+ \operatorname{rank}(A_{\mathcal{I}_p})$. 
	Hence, we obtain $\operatorname{rank}(A_{\mathcal{I}_r} P) = r - r_p$. 
	
	Since $A_{\mathcal{I}_p}P=0$, we have 
	\(
	AP=
	\begin{pmatrix}
		0\\
		A_{\mathcal{I}_r}P
	\end{pmatrix},
	\)
	which implies that $AP$ and $A_{\mathcal{I}_r}P$ have the same nonzero singular values. 
	Define the matrix $Q=\begin{pmatrix}
		q_1,\ldots,q_{n-r_p}
	\end{pmatrix}\in\mathbb{R}^{n\times(n-r_p)}$, where the columns form an orthonormal basis of $\operatorname{Null}(A_{\mathcal{I}_p})$.
	Since $P$ is the orthogonal projection operator onto the subspace $\operatorname{Null}(A_{\mathcal{I}_p})$, we have
	$P=QQ^\top$.
	Hence, it holds that
	$$AP(AP)^\top=AQQ^\top QQ^\top A^\top=AQQ^\top A^\top=AQ(AQ)^\top,$$
	which implies that the matrices $AP$ and $AQ$ have the same nonzero singular values. 
	Using Theorem \ref{thm-poincare} with $B=A^\top A$, it holds that
	\begin{equation}\label{ineq-proof1}
		\lambda_{i+r_p}(A^\top A) \leq \lambda_i(Q^\top A^\top AQ) \leq \lambda_i(A^\top A), \quad i=1,\ldots,n-r_p.
	\end{equation}
	Using the equations $\sigma_i(AQ)=\sqrt{\lambda_i(Q^\top A^\top AQ)}$ and $\sigma_i(AQ)=\sigma_i(AP)=\sigma_i(A_{\mathcal{I}_r}P)$, we have 
	\begin{equation*}
		\sigma_{i+r_p}(A) \leq \sigma_i(A_{\mathcal{I}_r}P) \leq \sigma_i(A), \quad i=1,\ldots,n-r_p.
	\end{equation*}
	This completes the proof of the lemma.
\end{proof}
\subsection{Proof of Theorem \ref{Thm-Convergence-SCRIM}}
To facilitate the convergence analysis, we introduce several auxiliary variables and notations associated with the \(k\)-th iteration.
At the \(k\)-th iteration, we consider the product probability space
\(
\left( \prod_{i=0}^k \Omega, \otimes_{i=0}^k \mathcal{F}, \tilde{\mathbf{P}} \right),
\)
where \(\otimes\) denotes the product of \(\sigma\)-algebras and \(\tilde{\mathbf{P}}\) is the corresponding product measure, as defined in \cite[Section 5]{athreya2006measure}.
Let \(\mathcal{B}_k:= \sigma(S_0, S_1, \ldots, S_{k-1})\) be the $\sigma$-algebra generated by the random variables $ (S_0, S_1, \ldots, S_{k-1})$, 
where $\mathcal{B}_0$ is the trivial $\sigma$-algebra (i.e., $\mathcal{B}_0 = \{\emptyset, \Omega\}$). 
We denote the conditional expectation with respect to $\mathcal{B}_k$ as 
\[
\mathbb{E}_{k}[\cdot] := \mathbb{E}[\cdot \mid \mathcal{B}_k].\]
Note that if \(X\) and \(XY\) are integrable random variables, and \(X\) is measurable with respect to  \(\mathcal{B}_k\), then the following identities hold \cite[Proposition 12.1.5 (ii)]{athreya2006measure}:
\begin{equation}\label{prop-Econd}
	\begin{aligned}
		\mathbb{E}[X | \mathcal{B}_k] = X \quad \text{and} \quad \mathbb{E}[XY| \mathcal{B}_k] = X \mathbb{E}[Y| \mathcal{B}_k].
	\end{aligned}
\end{equation}
Let \(\bar S_k\) be a sampling
matrix drawn from the probability space \((\Omega,\mathcal F,\mathbf{P})\). 
We define the auxiliary iterate 
$$\bar x^{k+1}:=x^k-\bar\alpha_k
P A_{\mathcal I_r}^{\top}
\bar S_k\bar S_k^{\top}
(A_{\mathcal I_r}x^k-b_{\mathcal I_r}).$$ 
The step-size $\bar\alpha_k$ is chosen as
\begin{equation}\label{eq-bar-alpha}
	\bar\alpha_k
	=
	\begin{cases}
		(2-\zeta)\frac{
			\|\bar S_k^\top(A_{\mathcal I_r}x^k-b_{\mathcal I_r})\|_2^2
		}{
			\|P A_{\mathcal I_r}^{\top}
			\bar S_k\bar S_k^\top
			(A_{\mathcal I_r}x^k-b_{\mathcal I_r})\|_2^2
		},
		& \text{if }\bar S_k\in\mathcal Q_k,\\[1mm]
		0,
		& \text{if }\bar S_k\in\mathcal Q_k^c,
	\end{cases}
\end{equation}
where $\mathcal{Q}_k$ is defined in \eqref{eq-Q_k}. 
For any measurable set \(\mathcal Q\subseteq\Omega\), we use
\(
\mathbf P(\bar S_k\in\mathcal Q\mid\mathcal B_k)
\)
to denote the conditional probability of the event
\(\{\bar S_k\in\mathcal Q\}\) given \(\mathcal B_k\). The complement of \(\mathcal{Q}\) is defined as \(\mathcal{Q}^c = \Omega \setminus \mathcal{Q}\).
The indicator function $\mathbbm{I}_{\mathcal{Q}}(\cdot)$ of the set $\mathcal{Q}$ is defined as 
\[
\mathbbm{I}_{\mathcal{Q}}(\bar{S}_k)
=
\begin{cases}
	1
	& \text{if }\bar S_k\in\mathcal Q;\\
	0
	& \text{otherwise}.
\end{cases}
\]

\begin{proof}[Proof of Theorem \ref{Thm-Convergence-SCRIM}]
	We first demonstrate that $Ax^k=b$ implies that $x^k=A^\dagger b$. 
	Suppose that $Ax^k=b$ and $Ax^i\neq b$ for $i=0,1,\ldots,k-1$. 
	Note that $x^0=A_{\mathcal{I}_p}^\dagger b_{\mathcal{I}_p}$ and all iterates $x^k$ of Algorithm \ref{Algo-1} belong to $x^0+\operatorname{Range}(PA_{\mathcal{I}_r}^\top)$, which ensures that 
	$$\left(I-(A_{\mathcal{I}_r}P)^\dagger A_{\mathcal{I}_r}P\right)x^k=\left(I-(A_{\mathcal{I}_r}P)^\dagger A_{\mathcal{I}_r}P\right)A_{\mathcal{I}_p}^\dagger b_{\mathcal{I}_p}=A_{\mathcal{I}_p}^\dagger b_{\mathcal{I}_p}.$$ 
	Using the assumption $Ax^k=b$, we obtain $A_{\mathcal{I}_r}Px^k=\hat{b}_{\mathcal{I}_r}$ and 
	\begin{equation*}
		\begin{aligned}
			x^k&=(A_{\mathcal{I}_r}P)^\dagger A_{\mathcal{I}_r}Px^k+A_{\mathcal{I}_p}^\dagger b_{\mathcal{I}_p}=(A_{\mathcal{I}_r}P)^\dagger \hat{b}_{\mathcal{I}_r}+A_{\mathcal{I}_p}^\dagger b_{\mathcal{I}_p}=A^\dagger b,
		\end{aligned}
	\end{equation*}
	where the last equality follows from Lemma \ref{lemma-same-solution-set}. 
	
	We next consider the case where $Ax^k\neq b$. 
	Since \(Ax^k\neq b\) and \(A_{\mathcal I_p}x^k=b_{\mathcal I_p}\), it follows that
	\(
	A_{\mathcal I_r}x^k\neq b_{\mathcal I_r}.
	\)
	We first show that
	\(
	\mathbf P(\bar S_k\in\mathcal Q_k\mid\mathcal B_k)>0.
	\) 
	Suppose, for contradiction, that
	\(\mathbf P(\bar S_k\in\mathcal Q_k\mid\mathcal B_k)=0\).
	Hence,
	\(
	\mathbf P(\bar S_k\in\mathcal Q_k^c\mid\mathcal B_k)=1.
	\)
	Together with the definition of \(\mathcal Q_k\) in \eqref{eq-Q_k}, this implies that
	\(
	\bar S_k^\top(A_{\mathcal I_r}x^k-b_{\mathcal I_r})=0
	\)
	almost surely given \(\mathcal B_k\). 
	Hence, 
	\begin{equation}\label{eq-0514-3}
		\mathbb{E}_{k}\left[\lVert \bar S_{k}^\top(A_{\mathcal{I}_r}x^k-b_{\mathcal{I}_r})\rVert_2^2\right]=0.
	\end{equation}
	However, 
	since \(A_{\mathcal{I}_r}x^k-b_{\mathcal{I}_r}\) is
	\(\mathcal B_k\)-measurable, we have
	\begin{equation}\label{eq-0514-4}
		\begin{aligned}
			\mathbb{E}_{k}\left[
			\left\|
			\bar S_{k}^\top(A_{\mathcal{I}_r}x^k-b_{\mathcal{I}_r})
			\right\|_2^2
			\right]
			&=
			(A_{\mathcal{I}_r}x^k-b_{\mathcal{I}_r})^\top
			\mathbb E_k[\bar S_k\bar S_k^\top]
			(A_{\mathcal{I}_r}x^k-b_{\mathcal{I}_r})\\
			&=(A_{\mathcal{I}_r}x^k-b_{\mathcal{I}_r})^\top
			\mathbb E[\bar S_k\bar S_k^\top]
			(A_{\mathcal{I}_r}x^k-b_{\mathcal{I}_r})>0,
		\end{aligned}
	\end{equation}
	where the second equality follows from the independence of \(\bar S_k\) and
	\(\mathcal B_k\), together with the fact that \(\bar S_k\) is sampled from
	\((\Omega,\mathcal F,\mathbf P)\). 
	The last inequality follows from
	the positive definiteness of
	\(\mathbb E[\bar S_k\bar S_k^\top]\) by Assumption~\ref{assumption1} and the fact that
	\(A_{\mathcal I_r}x^k-b_{\mathcal I_r}\neq0\).  
	The strict positivity in \eqref{eq-0514-4} contradicts \eqref{eq-0514-3}.
	Therefore, we obtain
	\(
	\mathbf P(\bar S_k\in\mathcal Q_k\mid\mathcal B_k)>0.
	\)
	
	Conditional on \(\mathcal B_k\), Step~2 of Algorithm~\ref{Algo-1} samples
	\(S_k\) according to the conditional distribution of \(\bar S_k\) given
	\(\bar S_k\in\mathcal Q_k\). Therefore, the random iterate \(x^{k+1}\) has
	the same conditional distribution as the auxiliary iterate \(\bar x^{k+1}\)
	conditioned on \(\bar S_k\in\mathcal Q_k\). 
	Hence,
	\begin{equation}\label{eq-0514-5}
		\mathbb E_k
		\left[
		\|x^{k+1}-A^\dagger b\|_2^2
		\right]
		=
		\frac{
			\mathbb E_k
			\left[
			\left\|\bar x^{k+1}-A^\dagger b\right\|_2^2
			\mathbbm{I}_{\mathcal Q_k}(\bar S_k)
			\right]
		}{
			\mathbf P(\bar S_k\in\mathcal Q_k\mid\mathcal B_k)
		}.
	\end{equation}
	If the sampling matrix $\bar{S}_k\in\mathcal{Q}_k$, we have
	\begin{equation}\label{eq-0514-1}
		\begin{aligned}
			\lVert \bar{x}^{k+1}-A^\dagger b\rVert_2^2=&\lVert x^k-A^\dagger b-\bar\alpha_kPA_{\mathcal{I}_r}^\top \bar S_{k}\bar S_{k}^\top(A_{\mathcal{I}_r}x^k-b_{\mathcal{I}_r})\rVert_2^2\\
			=&\lVert x^k-A^\dagger b\rVert_2^2-2\bar\alpha_k\langle x^k-A^\dagger b,PA_{\mathcal{I}_r}^\top \bar S_{k}\bar S_{k}^\top(A_{\mathcal{I}_r}x^k-b_{\mathcal{I}_r})\rangle\\
			&+\bar\alpha_k^2\lVert  PA_{\mathcal{I}_r}^\top \bar S_{k}\bar S_{k}^\top(A_{\mathcal{I}_r}x^k-b_{\mathcal{I}_r})\rVert_2^2.
		\end{aligned}
	\end{equation}
	Since $A_{\mathcal{I}_r}P(x^k-A^\dagger b)=A_{\mathcal{I}_r}Px^k-\hat{b}_{\mathcal{I}_r}=A_{\mathcal{I}_r}x^k-b_{\mathcal{I}_r}$ for $k\geq0$, we have 
	\begin{equation}\label{eq-0514-2}
		\langle x^k-A^\dagger b,PA_{\mathcal{I}_r}^\top \bar S_{k}\bar S_{k}^\top(A_{\mathcal{I}_r}x^k-b_{\mathcal{I}_r})\rangle=\lVert \bar S_{k}^\top(A_{\mathcal{I}_r}x^k-b_{\mathcal{I}_r})\rVert_2^2.
	\end{equation}
	Substituting \eqref{eq-0514-2} into \eqref{eq-0514-1} and using the definition of $\bar{\alpha}_k$ in \eqref{eq-bar-alpha}, we obtain
	\begin{equation*}
		\begin{aligned}
			\lVert \bar{x}^{k+1}-A^\dagger b\rVert_2^2
			=&\lVert x^k-A^\dagger b\rVert_2^2-2\bar\alpha_k\lVert \bar S_{k}^\top(A_{\mathcal{I}_r}x^k-b_{\mathcal{I}_r})\rVert_2^2+\bar\alpha_k(2-\zeta)\lVert \bar S_{k}^\top(A_{\mathcal{I}_r}x^k-b_{\mathcal{I}_r})\rVert_2^2\\
			=&\lVert x^k-A^\dagger b\rVert_2^2-\zeta(2-\zeta)\frac{
				\|\bar S_k^\top(A_{\mathcal I_r}x^k-b_{\mathcal I_r})\|_2^4
			}{
				\|P A_{\mathcal I_r}^{\top}
				\bar S_k\bar S_k^\top
				(A_{\mathcal I_r}x^k-b_{\mathcal I_r})\|_2^2
			}\\
			\leq&\lVert x^k-A^\dagger b\rVert_2^2-\zeta(2-\zeta)\frac{\lVert \bar S_{k}^\top(A_{\mathcal{I}_r}x^k-b_{\mathcal{I}_r})\rVert_2^2}{\|\bar S_k^\top A_{\mathcal I_r}P\|_2^2},
		\end{aligned}
	\end{equation*}
	where the last inequality follows from
	\[
	\frac{
		\|\bar S_k^\top(A_{\mathcal I_r}x^k-b_{\mathcal I_r})\|_2^4
	}{
		\|P A_{\mathcal I_r}^{\top}
		\bar S_k\bar S_k^\top
		(A_{\mathcal I_r}x^k-b_{\mathcal I_r})\|_2^2
	}\geq
	\frac{\|\bar S_k^\top(A_{\mathcal I_r}x^k-b_{\mathcal I_r})\|_2^2}{
		\|\bar S_k^\top A_{\mathcal I_r}P\|_2^2
	}.
	\] 
	Since $\mathbb{E}_{k}\left[\lVert x^k-A^\dagger b\rVert_2^2\mathbbm{I}_{\mathcal Q_k}(\bar S_k)\right]=
	\mathbf P(\bar S_k\in\mathcal Q_k\mid\mathcal B_k)\lVert x^k-A^\dagger b\rVert_2^2$, we have
	\begin{equation}\label{eq-0514-8}
		\begin{aligned}
			&\mathbb{E}_{k}\left[\lVert \bar{x}^{k+1}-A^\dagger b\rVert_2^2\mathbbm{I}_{\mathcal Q_k}(\bar S_k)\right]\\
			&\leq
			\mathbf P(\bar S_k\in\mathcal Q_k\mid\mathcal B_k)\lVert x^k-A^\dagger b\rVert_2^2-\zeta(2-\zeta)\mathbb{E}_{k}\left[\frac{\lVert \bar S_{k}^\top(A_{\mathcal{I}_r}x^k-b_{\mathcal{I}_r})\rVert_2^2}{\|\bar S_k^\top A_{\mathcal I_r}P \|_2^2}\mathbbm{I}_{\mathcal Q_k}(\bar S_k)\right].
		\end{aligned}
	\end{equation}
	Dividing both sides of \eqref{eq-0514-8} by
	\(\mathbf P(\bar S_k\in\mathcal Q_k\mid\mathcal B_k)\) and using
	\eqref{eq-0514-5}, we have
	\begin{equation}\label{eq-0514-9}
		\begin{aligned}
			\mathbb E_k
			\left[
			\|x^{k+1}-A^\dagger b\|_2^2
			\right]&\leq
			\lVert x^k-A^\dagger b\rVert_2^2-\frac{\zeta(2-\zeta)}{\mathbf P(\bar S_k\in\mathcal Q_k\mid\mathcal B_k)}\mathbb{E}_{k}\left[\frac{\lVert \bar S_{k}^\top(A_{\mathcal{I}_r}x^k-b_{\mathcal{I}_r})\rVert_2^2}{\|\bar S_k^\top A_{\mathcal I_r}P \|_2^2}\mathbbm{I}_{\mathcal Q_k}(\bar S_k)\right]\\
			&\leq
			\lVert x^k-A^\dagger b\rVert_2^2-\zeta(2-\zeta)\mathbb{E}_{k}\left[\frac{\lVert \bar S_{k}^\top(A_{\mathcal{I}_r}x^k-b_{\mathcal{I}_r})\rVert_2^2}{\|\bar S_k^\top A_{\mathcal I_r}P \|_2^2}\mathbbm{I}_{\mathcal Q_k}(\bar S_k)\right],
		\end{aligned}
	\end{equation}
	where the last inequality follows from $0<
	\mathbf P(\bar S_k\in\mathcal Q_k\mid\mathcal B_k)
	\leq1$. 
	Since
	\(
	\bar S_k^\top(A_{\mathcal I_r}x^k-b_{\mathcal I_r})=0
	\) for \(\bar S_k\in\mathcal Q_k^c\), 
	it holds that 
	\begin{equation}\label{eq-0514-10}
		\begin{aligned}
			&\mathbb E_k\left[
			\frac{
				\left\|\bar S_k^\top(A_{\mathcal I_r}x^k-b_{\mathcal I_r})\right\|_2^2
			}{
				\left\|\bar S_k^\top A_{\mathcal I_r}P\right\|_2^2
			}
			\mathbbm{I}_{\mathcal Q_k}(\bar S_k)
			\right]\\
			&=
			\mathbb E_k\left[
			\frac{
				\left\|\bar S_k^\top(A_{\mathcal I_r}x^k-b_{\mathcal I_r})\right\|_2^2
			}{
				\left\|\bar S_k^\top A_{\mathcal I_r}P\right\|_2^2
			}
			\mathbbm{I}_{\mathcal Q_k}(\bar S_k)
			\right]
			+
			\mathbb E_k\left[
			\frac{
				\left\|\bar S_k^\top(A_{\mathcal I_r}x^k-b_{\mathcal I_r})\right\|_2^2
			}{
				\left\|\bar S_k^\top A_{\mathcal I_r}P\right\|_2^2
			}
			\mathbbm{I}_{\mathcal Q_k^c}(\bar S_k)
			\right] \\
			&=\mathbb E_k\left[
			\frac{
				\left\|\bar S_k^\top(A_{\mathcal I_r}x^k-b_{\mathcal I_r})\right\|_2^2
			}{
				\left\|\bar S_k^\top A_{\mathcal I_r}P\right\|_2^2
			}		
			\right],
		\end{aligned}
	\end{equation}
	where the last equality follows from the fact that $\mathbbm{I}_{\mathcal Q_k}(\bar S_k)+\mathbbm{I}_{\mathcal Q_k^c}(\bar S_k)=1$ for any $\bar{S}_k\in\Omega$. 
	Since \(A_{\mathcal I_r}x^k-b_{\mathcal I_r}\) is \(\mathcal B_k\)-measurable, we have
	\[
	\begin{aligned}
		\mathbb E_k\left[
		\frac{
			\left\|
			\bar S_k^\top(A_{\mathcal I_r}x^k-b_{\mathcal I_r})
			\right\|_2^2
		}{
			\left\|
			\bar S_k^\top A_{\mathcal I_r}P
			\right\|_2^2
		}
		\right]
		&=
		(A_{\mathcal I_r}x^k-b_{\mathcal I_r})^\top
		\mathbb E_k\left[
		\frac{
			\bar S_k\bar S_k^\top
		}{
			\left\|
			\bar S_k^\top A_{\mathcal I_r}P
			\right\|_2^2
		}
		\right]
		(A_{\mathcal I_r}x^k-b_{\mathcal I_r}).
	\end{aligned}
	\]
	Moreover, since \(\bar S_k\) is independent of \(\mathcal B_k\) and samples from the probability space $(\Omega,\mathcal{F},\mathbf{P})$, it holds that
	\[
	\mathbb E_k\left[
	\frac{
		\bar S_k\bar S_k^\top
	}{
		\left\|
		\bar S_k^\top A_{\mathcal I_r}P
		\right\|_2^2
	}
	\right]
	=
	\mathbb E\left[
	\frac{
		\bar S_k\bar S_k^\top
	}{
		\left\|
		\bar S_k^\top A_{\mathcal I_r}P
		\right\|_2^2
	}
	\right]
	=
	H.
	\]
	Therefore, since $A_{\mathcal{I}_r}Px^k-\hat{b}_{\mathcal{I}_r}=A_{\mathcal{I}_r}x^k-b_{\mathcal{I}_r}$ for $k\geq0$, we have
	\begin{equation}\label{eq-0514-11}
		\begin{aligned}
			\mathbb E_k\left[
			\frac{
				\left\|
				\bar S_k^\top(A_{\mathcal I_r}x^k-b_{\mathcal I_r})
				\right\|_2^2
			}{
				\left\|
				\bar S_k^\top A_{\mathcal I_r}P
				\right\|_2^2
			}
			\right]
			=
			\|
			A_{\mathcal I_r}x^k-b_{\mathcal I_r}\|_H^2=\|
			A_{\mathcal{I}_r}Px^k-\hat{b}_{\mathcal{I}_r}
			\|_H^2.
		\end{aligned}
	\end{equation}
	Substituting \eqref{eq-0514-10} and \eqref{eq-0514-11} into \eqref{eq-0514-9}, we have 
	\begin{equation}\label{eq-0514-12}
		\begin{aligned}
			\mathbb E_k
			\left[
			\|x^{k+1}-A^\dagger b\|_2^2
			\right]
			&\leq
			\lVert x^k-A^\dagger b\rVert_2^2-\zeta(2-\zeta)\|
			A_{\mathcal{I}_r}Px^k-\hat{b}_{\mathcal{I}_r}
			\|_H^2\\
			&\leq\left(1-
			\zeta(2-\zeta)\sigma_{\min}^2(H^{\frac{1}{2}}A_{\mathcal{I}_r}P)\right)\| x^k-A^\dagger b\|_2^2,
		\end{aligned}
	\end{equation}
	where the inequality follows from  $H$ is positive definite and $x^k-A^\dagger b\in\operatorname{Range}(PA_{\mathcal{I}_r}^\top)$, which we proceed to verify below by induction. 
	When $k=0$, it holds that $x^0-A^\dagger b=A_{\mathcal{I}_p}^\dagger b_{\mathcal{I}_p}-A^\dagger b=-(A_{\mathcal{I}_r}P)^\dagger \hat{b}_{\mathcal{I}_r}\in\operatorname{Range}(PA_{\mathcal{I}_r}^\top)$. 
	Assume that $x^{k-1}-A^\dagger b \in \operatorname{Range}(PA_{\mathcal{I}_r}^\top)$. 
	It holds that $x^k-A^\dagger b=x^{k-1}-A^\dagger b-\alpha_{k-1}PA_{\mathcal{I}_r}^\top S_{k-1}S_{k-1}^\top(A_{\mathcal{I}_r}x^{k-1}-b_{\mathcal{I}_r})\in\operatorname{Range}(PA_{\mathcal{I}_r}^\top)$. 
	Hence, we conclude that $x^k-A^\dagger b\in \operatorname{Range}(PA_{\mathcal{I}_r}^\top)$ for $k\geq0$. 
	Taking the full expectation on both sides of \eqref{eq-0514-12} and applying
	the resulting inequality recursively, we obtain
	\begin{equation*}
		\begin{aligned}
			\mathbb{E}\left[\lVert x^{k+1}-A^\dagger b\rVert_2^2\right]
			&\leq\rho^{k+1}\lVert x^0-A^\dagger b\rVert_2^2,
		\end{aligned}
	\end{equation*}
	where $\rho$ is defined in \eqref{eq-rho}. 
	This completes the proof of this theorem.
\end{proof}
\subsection{Proof of Proposition \ref{thm-con-rate-factor} and Lemma \ref{col-surrogate}}
To prove the Proposition \ref{thm-con-rate-factor}, we first introduce the change-of variables theorem.
\begin{theorem}[Theorem 3.5.16,\cite{goswami2025measure}]
	\label{Thm-change-of-variable}
	Let \((\tilde{\Omega}, \tilde{\mathcal{F}}, \tilde{\mathbf{P}})\) be a measure space. 
	Suppose that \(T:(\tilde{\Omega},\tilde{\mathcal F})\to(\Omega,\mathcal F)\) is measurable, and define the measure \(\mathbf P\) on \((\Omega,\mathcal F)\) by
	\(\mathbf P(B)=\tilde{\mathbf P}(T^{-1}(B))\) for all \(B\in\mathcal F\).
	Then, for any extended-real-valued measurable function \(g\) on \(\Omega\), it holds that
	\[
	\int_{\Omega} g \, d\mathbf{P}
	=
	\int_{\tilde{\Omega}} g \circ T \, d\tilde{\mathbf{P}}.
	\]
\end{theorem}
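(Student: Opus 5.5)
The proof follows the standard measure-theoretic bootstrapping argument: establish the identity for indicator functions, extend it by linearity to nonnegative simple functions, pass to nonnegative measurable functions by monotone convergence, and finally treat general extended-real-valued $g$ through its positive and negative parts. Measurability of $g\circ T$ is immediate throughout, since $(g\circ T)^{-1}(E)=T^{-1}(g^{-1}(E))\in\tilde{\mathcal F}$ for every Borel set $E$, because $T$ is $\tilde{\mathcal F}/\mathcal F$-measurable.

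\textbf{Indicators and simple functions.} First take $g=\mathbbm{I}_B$ with $B\in\mathcal F$. Then $g\circ T=\mathbbm{I}_{T^{-1}(B)}$, so
\[
\int_{\tilde\Omega} g\circ T\,d\tilde{\mathbf P}=\tilde{\mathbf P}(T^{-1}(B))=\mathbf P(B)=\int_\Omega g\,d\mathbf P,
\]
where the middle equality is exactly the definition of the pushforward measure $\mathbf P$. For a nonnegative simple function $g=\sum_j c_j\mathbbm{I}_{B_j}$, the composition is $g\circ T=\sum_j c_j\mathbbm{I}_{T^{-1}(B_j)}$. Linearity of the integral on both sides then gives the identity.

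\textbf{Nonnegative measurable functions.} For measurable $g\ge 0$, possibly taking the value $+\infty$, choose nonnegative simple functions $g_n\uparrow g$ pointwise, for instance the standard dyadic truncations. Then $g_n\circ T\uparrow g\circ T$ pointwise on $\tilde\Omega$, and each $g_n\circ T$ is simple. Applying the monotone convergence theorem on both sides and using the simple-function case yields
\[
\int_\Omega g\,d\mathbf P=\lim_n\int_\Omega g_n\,d\mathbf P=\lim_n\int_{\tilde\Omega} g_n\circ T\,d\tilde{\mathbf P}=\int_{\tilde\Omega} g\circ T\,d\tilde{\mathbf P}.
\]
This holds in $[0,\infty]$.

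\textbf{General $g$.} Write $g=g^+-g^-$ and note that $(g\circ T)^\pm=g^\pm\circ T$. By the previous step, $\int g^\pm\,d\mathbf P=\int g^\pm\circ T\,d\tilde{\mathbf P}$. Hence $\int_\Omega g\,d\mathbf P$ is defined (not of the form $\infty-\infty$) if and only if $\int_{\tilde\Omega} g\circ T\,d\tilde{\mathbf P}$ is, and the two integrals coincide. The statement should be read in this sense: either both sides exist and are equal, or neither exists.

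The only step needing genuine care is this last one, namely handling the possibility that the integrals are undefined. There is no real obstacle beyond it, because the nonnegative case carries all the content.

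In the paper's setting the theorem will be applied with $T(\tilde S)=S$, extracting the $\mathcal I_r$ block of $\tilde S$, which is measurable as a coordinate projection. It will be used to rewrite expectations over $\Omega$, such as the one defining $H$, as expectations over $\tilde\Omega$.
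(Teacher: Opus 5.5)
Your proof is correct. It is the standard indicator, then simple-function, then monotone-convergence, then positive/negative-parts argument; the paper gives no proof of its own here and simply imports the result from Goswami's text, where it is established this way. Your caveat that for general $g$ the identity means ``one side exists if and only if the other does, and then they agree'' is the right reading of the paper's looser phrasing. In the paper's only use of the theorem, in the proof of Proposition~\ref{thm-con-rate-factor}, the integrand is a nonnegative quadratic form divided by $\|S^\top A_{\mathcal{I}_r}P\|_2^2$, so only your nonnegative case is actually needed.
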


\begin{proof}[Proof of Proposition \ref{thm-con-rate-factor}]
	To demonstrate that $\rho\leq\tilde{\rho}$, it is sufficient to show that \(
	\sigma_{\min}^2(\tilde{H}^{1/2} A) \leq \sigma_{\min}^2(H^{1/2} A_{\mathcal{I}_r} P)
	\). 
	Since \(\mathbb{E}[\tilde{S} \tilde{S}^\top]\) is positive definite and has principal submatrix $\mathbb{E}[SS^\top]$ , it follows that \(\mathbb{E}[S S^\top]\) is also symmetric positive definite. 
	As a result, both
	\[
	\tilde{H} = \mathbb{E} \left[ \frac{\tilde{S} \tilde{S}^\top}{\lVert \tilde{S}^\top A \rVert_2^2} \right]
	\quad \text{and} \quad
	H = \mathbb{E} \left[ \frac{S S^\top}{\lVert S^\top A_{\mathcal{I}_r} P \rVert_2^2} \right]
	\]
	are symmetric positive definite by Lemma~\ref{lem-pd}. 
	For any fixed \(x \in \mathbb{R}^n\), we define the function 
	\(
	g(S) := \frac{x^\top P A_{\mathcal{I}_r}^\top S S^\top A_{\mathcal{I}_r} P x}{\lVert S^\top A_{\mathcal{I}_r} P \rVert_2^2},S\in\Omega.
	\)
	From the block form of \(\tilde{S}\) in \eqref{eq-S}, we have the identity 
	$\Tilde{S}^\top AP=\begin{pmatrix}
		\tilde{S}_{\mathcal{I}_p}^\top&S^\top
	\end{pmatrix}\begin{pmatrix}
		0\\
		A_{\mathcal{I}_r} P
	\end{pmatrix}=S^\top A_{\mathcal{I}_r} P$, which leads to
	\(
	P A^\top \tilde{S} \tilde{S}^\top A P = P A_{\mathcal{I}_r}^\top S S^\top A_{\mathcal{I}_r} P.
	\)
	Hence, for any $\tilde{S}\in\tilde{\Omega}$, with \(S=T(\tilde S)\), we have
	\[
	g \circ T(\tilde{S})=g(S)=\frac{x^\top P A_{\mathcal{I}_r}^\top S S^\top A_{\mathcal{I}_r} P x}{\lVert S^\top A_{\mathcal{I}_r} P \rVert_2^2}= \frac{x^\top P A^\top \tilde{S} \tilde{S}^\top A P x}{\lVert \tilde{S}^\top A P \rVert_2^2}.
	\]
	Applying Theorem~\ref{Thm-change-of-variable}, we obtain
	\begin{equation*}
		\int_{\Omega} \frac{x^\top P A_{\mathcal{I}_r}^\top S S^\top A_{\mathcal{I}_r} P x}{\lVert S^\top A_{\mathcal{I}_r} P \rVert_2^2}  d\mathbf{P}
		=
		\int_{\tilde{\Omega}} \frac{x^\top P A^\top \tilde{S} \tilde{S}^\top A P x}{\lVert \tilde{S}^\top A P \rVert_2^2}  d\tilde{\mathbf{P}}.
	\end{equation*}
	Consequently, the minimum nonzero singular value of \(H^{\frac{1}{2}} A_{\mathcal{I}_r} P\) satisfies
	\begin{equation*}
		\begin{aligned}
			\sigma_{\min}^2(H^{\frac{1}{2}} A_{\mathcal{I}_r}P)
			&= \min_{\lVert x \rVert_2 = 1, x \in \operatorname{Range}\left((H^{\frac{1}{2}} A_{\mathcal{I}_r} P)^\top\right)} \int_{\Omega} \frac{x^\top P A_{\mathcal{I}_r}^\top S S^\top A_{\mathcal{I}_r} P x}{\lVert S^\top A_{\mathcal{I}_r} P \rVert_2^2}  d\mathbf{P} \\
			&= \min_{\lVert x \rVert_2 = 1, x \in \operatorname{Range}\left((H^{\frac{1}{2}} A_{\mathcal{I}_r}P)^\top\right)} \int_{\tilde{\Omega}} \frac{x^\top PA^\top \tilde{S} \tilde{S}^\top A Px}{\lVert \tilde{S}^\top A P \rVert_2^2}  d\tilde{\mathbf{P}} \\
			&= \min_{\lVert x \rVert_2 = 1, x \in \operatorname{Range}(PA_{\mathcal{I}_r}^\top)} \int_{\tilde{\Omega}} \frac{x^\top PA^\top \tilde{S} \tilde{S}^\top A Px}{\lVert \tilde{S}^\top A P \rVert_2^2}  d\tilde{\mathbf{P}} 
		\end{aligned}
	\end{equation*}
	where the last equation follows from $\operatorname{Range}\left((H^\frac{1}{2}A_{\mathcal{I}_r}P)^\top)\right)=\operatorname{Range}(PA_{\mathcal{I}_r}^\top)$ as $H$ is positive definite. 
	For any $x\in\operatorname{Range}(PA_{\mathcal{I}_r}^\top)$, it holds that $Px=x$ due to $P^2=P$. 
	Since $P$ is an orthogonal projection matrix, then we obtain $\lVert P\rVert_2=1$ and $\lVert \tilde{S}^\top AP\rVert_2^2\leq\lVert\tilde{S}^\top A\rVert_2^2\lVert P\rVert_2^2=\lVert \tilde{S}^\top A\rVert_2^2$ for any $\tilde{S}\in\tilde{\Omega}$.
	Then, it holds that 
	\begin{equation*}
		\begin{aligned}
			\sigma_{\min}^2(H^{\frac{1}{2}} A_{\mathcal{I}_r}P)
			&\geq \min_{\lVert x \rVert_2 = 1, x \in \operatorname{Range}(P A_{\mathcal{I}_r}^\top)} \int_{\tilde{\Omega}} \frac{x^\top A^\top \tilde{S} \tilde{S}^\top A x}{\lVert \tilde{S}^\top A \rVert_2^2}  d\tilde{\mathbf{P}}.
		\end{aligned}
	\end{equation*}
	We next show that $\operatorname{Range}(PA_{\mathcal{I}_r}^\top)\subseteq\operatorname{Range}(A^\top)$.
	For any $y\in\operatorname{Range}(PA_{\mathcal{I}_r}^\top)$, there exists $\eta\in\mathbb{R}^{m_r}$ such that $y=PA_{\mathcal{I}_r}^\top \eta=A_{\mathcal{I}_r}^\top \eta-A_{\mathcal{I}_p}^\dagger A_{\mathcal{I}_p}A_{\mathcal{I}_r}^\top \eta$
	From the equations $P=I-A_{\mathcal{I}_p}^\dagger A_{\mathcal{I}_p}$ and $(A_{\mathcal{I}_p}^\dagger A_{\mathcal{I}_p})^\top=A_{\mathcal{I}_p}^\dagger A_{\mathcal{I}_p}$, 
	we obtain
	\(y=A_{\mathcal{I}_r}^\top \eta-A_{\mathcal{I}_p}^\top(A_{\mathcal{I}_p}^\dagger)^\top A_{\mathcal{I}_r}^\top \eta\), which implies that $y\in\operatorname{Range}(A^\top)$.
	Since \(y\in\operatorname{Range}(PA_{\mathcal I_r}^\top)\) is arbitrary, it follows that $\operatorname{Range}(PA_{\mathcal{I}_r}^\top)\subseteq\operatorname{Range}(A^\top)$.
	Hence, we have
	\begin{equation*}
		\begin{aligned}
			\sigma_{\min}^2(H^{\frac{1}{2}} A_{\mathcal{I}_r}P)
			&\geq \min_{\lVert x \rVert_2 = 1, x \in \operatorname{Range}(A^\top)} \int_{\tilde{\Omega}} \frac{x^\top A^\top \tilde{S} \tilde{S}^\top A x}{\lVert \tilde{S}^\top A \rVert_2^2}  d\tilde{\mathbf{P}} \\
			&= \min_{\lVert x \rVert_2 = 1, x \in \operatorname{Range}\left((\tilde{H}^\frac{1}{2}A)^\top\right)} \int_{\tilde{\Omega}} \frac{x^\top A^\top \tilde{S} \tilde{S}^\top A x}{\lVert \tilde{S}^\top A \rVert_2^2}  d\tilde{\mathbf{P}}=\sigma_{\min}^2(\tilde{H}^{1/2} A),
		\end{aligned}
	\end{equation*}
	where the first equality follows from $\operatorname{Range}(A^\top)=\operatorname{Range}\left((\tilde{H}^\frac{1}{2}A)^\top\right)$ as $\tilde{H}$ is positive definite. 
	In conclusion, we have proved $\sigma_{\min}^2(\tilde{H}^{1/2} A) \leq \sigma_{\min}^2(H^{1/2} A_{\mathcal{I}_r} P)$, which implies that $\rho\leq\tilde{\rho}$. 
\end{proof}
\begin{proof}[Proof of Lemma \ref{col-surrogate}]
	We first show that $PA_{\mathcal{I}_r}^\top HA_{\mathcal{I}_r} P\succeq\frac{\lambda_{\min}(\bar{H})}{\lVert A_{\mathcal{I}_r} P\rVert_2^2}PA_{\mathcal{I}_r}^\top A_{\mathcal{I}_r} P$, where $A\succeq B$ means that $A-B$ is positive semidefinite.
	Since 
	\[
	H=\mathbb{E}\!\left[\frac{SS^\top}{\|S^\top A_{\mathcal{I}_r}P\|_2^2}\right]
	\;\succeq\;
	\frac{1}{\|A_{\mathcal{I}_r}P\|_2^2}\mathbb{E}\!\left[\frac{SS^\top}{\|S\|_2^2}\right]
	=\frac{1}{\|A_{\mathcal{I}_r}P\|_2^2}\bar{H}\succeq \frac{\lambda_{\min}(\bar{H})}{\|A_{\mathcal{I}_r}P\|_2^2}I\succeq \frac{\lambda_{\min}(\bar{H})}{\|A_{\mathcal{I}_r}P\|_F^2}I,
	\]
	where $H$ and $\bar{H}$ are defined in \eqref{eq-H}, it follows that $PA_{\mathcal{I}_r}^\top HA_{\mathcal{I}_r} P\succeq\frac{\lambda_{\min}(\bar{H})}{\lVert A_{\mathcal{I}_r} P\rVert_F^2}PA_{\mathcal{I}_r}^\top A_{\mathcal{I}_r} P$.
	Then, for any $x\in\mathbb{R}^n$, we have 
	$$x^\top PA_{\mathcal{I}_r}^\top HA_{\mathcal{I}_r} Px\geq\frac{\lambda_{\min}(\bar{H})}{\lVert A_{\mathcal{I}_r} P\rVert_F^2}x^\top PA_{\mathcal{I}_r}^\top A_{\mathcal{I}_r} Px.$$
	Hence, we obtain
	\begin{equation*}
		\begin{aligned}
			\sigma_{\min}^2(H^{\frac{1}{2}}A_{\mathcal{I}_r}P)
			&=\min_{\lVert x \rVert_2 = 1, x \in \operatorname{Range}(P A_{\mathcal{I}_r}^\top H^{1/2})}x^\top PA_{\mathcal{I}_r}^\top HA_{\mathcal{I}_r}Px\\
			&\geq \frac{\lambda_{\min}(\bar{H})}{\lVert A_{\mathcal{I}_r} P\rVert_F^2}\min_{\lVert x \rVert_2 = 1, x \in \operatorname{Range}(PA_{\mathcal{I}_r}^\top H^{1/2})}x^\top PA_{\mathcal{I}_r}^\top A_{\mathcal{I}_r} Px\\
			&= \frac{\lambda_{\min}(\bar{H})}{\lVert A_{\mathcal{I}_r} P\rVert_F^2}\min_{\lVert x \rVert_2 = 1, x \in \operatorname{Range}( PA_{\mathcal{I}_r}^\top)}x^\top PA_{\mathcal{I}_r}^\top A_{\mathcal{I}_r} Px\\
			&=\lambda_{\min}(\bar{H})\frac{\sigma_{\min}^2(A_{\mathcal{I}_r}P)}{\lVert A_{\mathcal{I}_r}P\rVert_F^2},
		\end{aligned}
	\end{equation*}
	where the third equation follows from  $\operatorname{Range}(PA_{\mathcal{I}_r}^\top H^{1/2})=\operatorname{Range}(PA_{\mathcal{I}_r}^\top)$ as the invertible of $H$. 
	This completes the proof of this lemma.
\end{proof}

\subsection{Proof of Lemma \ref{lem-pkneq0}}
We first introduce a useful lemma.
\begin{lemma}\label{lem-truncated-gs-orth}
	Let \(\{p^k\}_{k\geq0}\)
	be generated by \eqref{iter-pk} with $\ell\geq2$. 
	When $k\geq1$, 
	if \(p^i\neq0\) for
	\(i=0,1,\ldots,k\), then 
	\(
	\langle p^\mu,p^\nu\rangle=0\) for  
	\(j_{k,\ell}\leq \mu<\nu\leq k.
	\)
\end{lemma}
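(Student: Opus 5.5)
We argue by induction on \(k\geq1\), using only the recursion \eqref{iter-pk} and the fact that the window \(\{j_{k,\ell},\ldots,k\}\) slides by at most one index per step. The claim at step \(k\) is that every pair \(p^\mu,p^\nu\) with \(j_{k,\ell}\le\mu<\nu\le k\) is orthogonal. Since \(j_{k,\ell}=\max\{k-\ell+1,0\}\) is nondecreasing in \(k\), we have \(j_{k,\ell}\ge j_{k-1,\ell}\). It therefore suffices, at step \(k\), to combine the pairs with \(\nu\le k-1\) (covered by the induction hypothesis) with the new pairs \((\mu,k)\) for \(j_{k,\ell}\le\mu\le k-1\).

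\emph{Base case} \(k=1\). Here \(j_{1,\ell}=\max\{2-\ell,0\}=0\) because \(\ell\ge2\), so \(p^1=d^1-\frac{\langle d^1,p^0\rangle}{\|p^0\|_2^2}p^0\). This is well defined since \(p^0\neq0\), and a direct computation gives \(\langle p^1,p^0\rangle=0\).

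\emph{Induction step.} Assume the claim holds at \(k-1\ge1\), i.e., \(\{p^{j_{k-1,\ell}},\ldots,p^{k-1}\}\) are pairwise orthogonal. Because \(j_{k,\ell}\ge j_{k-1,\ell}\), the subfamily \(\{p^{j_{k,\ell}},\ldots,p^{k-1}\}\) is also pairwise orthogonal, and all its members are nonzero by hypothesis. Fix \(\mu\in\{j_{k,\ell},\ldots,k-1\}\) and take the inner product of \eqref{iter-pk} with \(p^\mu\):
\[
\langle p^k,p^\mu\rangle=\langle d^k,p^\mu\rangle-\sum_{i=j_{k,\ell}}^{k-1}\frac{\langle d^k,p^i\rangle}{\|p^i\|_2^2}\langle p^i,p^\mu\rangle .
\]
By mutual orthogonality only the term \(i=\mu\) survives the sum, which leaves \(\langle d^k,p^\mu\rangle-\langle d^k,p^\mu\rangle=0\). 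Combined with the inherited pairs, this proves the claim at step \(k\). Equivalently, in the notation of \eqref{iter-pk-2}, \(\tilde P_k\) has orthogonal nonzero columns, so \(I-\tilde P_k\tilde P_k^\dagger\) is the orthogonal projector onto \(\operatorname{Range}(\tilde P_k)^\perp\), and \(p^k\perp\operatorname{Range}(\tilde P_k)\).

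\emph{Main obstacle.} The only delicate point is the bookkeeping of the sliding window. One must check that the pairs needed at step \(k\) are a subset of those certified at step \(k-1\) plus the new ones involving \(p^k\); this is exactly the monotonicity \(j_{k,\ell}\ge j_{k-1,\ell}\). Nonvanishing of \(p^i\) is used only so that the coefficients in \eqref{iter-pk} are defined. The assumption \(\ell\ge2\) guarantees that the window at step \(k\) contains at least one earlier direction, so the statement is not vacuous.
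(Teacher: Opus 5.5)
Your proof is correct and follows essentially the same route as the paper. Both argue by induction on $k$: the monotonicity $j_{k,\ell}\ge j_{k-1,\ell}$ carries over orthogonality of the older pairs, and taking the inner product of \eqref{iter-pk} with $p^\mu$ shows that the new pairs $(\mu,k)$ are orthogonal. Your base case is the same as the paper's, and your explicit check $j_{1,\ell}=0$ for $\ell\ge 2$ and your statement of the induction hypothesis at step $k-1$ are slightly cleaner than the paper's indexing.
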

\begin{proof}[Proof of Lemma \ref{lem-truncated-gs-orth}]
	We prove by induction on \(k\geq1\) that
	\(\{p^\mu\}_{\mu=j_{k,\ell}}^k\) is orthogonal, i.e.,
	\(
	\langle p^\mu,p^\nu\rangle=0\)
	for \( j_{k,\ell}\le \mu<\nu\le k .
	\)
	For $k=1$, we have
	\[
	\langle p^0,p^1\rangle
	=
	\langle p^0,d^1\rangle
	-
	\frac{\langle p^0,d^1\rangle}{\|p^0\|_2^2}
	\|p^0\|_2^2
	=0.
	\]
	Assume that for some $k\geq1$, $\langle p^\mu,p^\nu\rangle=0$ for 
	$j_{k-1,\ell}\leq \mu<\nu\leq k-1$. 
	Since \(j_{k,\ell}\ge j_{k-1,\ell}\), the induction hypothesis
	implies that
	\[
	\langle p^\mu,p^\nu\rangle=0
	\quad
	\text{for all } j_{k,\ell}\le \mu<\nu\le k-1 .
	\]
	It remains to show that
	\(
	\langle p^\mu,p^k\rangle=0\)
	for all \(\mu=j_{k,\ell},\ldots,k-1 .
	\)
	From the definition of \(p^k\) in \eqref{iter-pk}, for any
	\(\mu=j_{k,\ell},\ldots,k-1\), we have
	\[
	\begin{aligned}
		\langle p^k,p^\mu\rangle
		=
		\langle d^k,p^\mu\rangle
		-
		\sum_{\nu=j_{k,\ell}}^{k-1}
		\frac{\langle d^k,p^\nu\rangle}{\|p^\nu\|_2^2}
		\langle p^\nu,p^\mu\rangle  
		&=
		\langle d^k,p^\mu\rangle
		-
		\frac{\langle d^k,p^\mu\rangle}{\|p^\mu\|_2^2}
		\|p^\mu\|_2^2
		=0,
	\end{aligned}
	\]
	where the second equality follows from the induction hypothesis. 
	This completes the proof of the orthogonal property. 
\end{proof}
\begin{proof}[Proof of Lemma \ref{lem-pkneq0}]
	We first show by induction that $x^k\in\mathcal{X}_p=\{x\in\mathbb R^n\mid A_{\mathcal I_p}x=b_{\mathcal I_p}\}$ for $k\geq0$. Since \(x^0=A_{\mathcal I_p}^\dagger b_{\mathcal I_p}\), we have
	\(x^0\in\mathcal X_p\). Suppose that \(x^k\in\mathcal X_p\) for some $k\geq0$. 
	Since $d^k\in \operatorname{Range}(PA_{\mathcal I_r}^\top)\subseteq
	\operatorname{Range}(P)=\operatorname{Null}(A_{\mathcal I_p})$, it follows from the definition of \(p^k\) in \eqref{iter-pk} that
	\(p^k\in\operatorname{Null}(A_{\mathcal I_p})\).
	According to the definition of $x^{k+1}$ in \eqref{iter-xk+1-cg}, we have $x^{k+1}\in x^k+\operatorname{span}\{p^k\}\subseteq\mathcal{X}_p$. 
	This completes the induction.
	
	For any iterate $x^k\in\mathcal{X}_p$, we have $Px^k=x^k-A_{\mathcal{I}_p}^\dagger A_{\mathcal{I}_p}x^k=x^k-A_{\mathcal{I}_p}^\dagger b_{\mathcal{I}_p}$, which yields  $A_{\mathcal{I}_r}Px^k-\hat{b}_{\mathcal{I}_r}=A_{\mathcal{I}_r}x^k-A_{\mathcal{I}_r}A_{\mathcal{I}_p}^\dagger b_{\mathcal{I}_p}-\hat{b}_{\mathcal{I}_r}=A_{\mathcal{I}_r}x^k-b_{\mathcal{I}_r}$. 
	Therefore, if $S_i^\top(A_{\mathcal{I}_r}x^i-b_{\mathcal{I}_r})\neq0$ for $i=0,1,\ldots,k$, we have 
	$S_i^\top(A_{\mathcal{I}_r}Px^i-\hat{b}_{\mathcal{I}_r})=S_i^\top(A_{\mathcal{I}_r}x^i-b_{\mathcal{I}_r})\neq0$. 
	By Lemma \ref{lemma-meanwhile}, it follows that 
	$d^i=-PA_{\mathcal{I}_r}^\top S_iS_i^\top(A_{\mathcal{I}_r}Px^i-\hat{b}_{\mathcal{I}_r})\neq0$ for $i=0,1,\ldots,k$. 
	If $\ell=1$, it follows from the definition of $p^k$ in \eqref{iter-pk} that $p^k=d^k\neq0$ and 
	\begin{equation*}
		\begin{aligned}
			\langle p^{k},x^k-A^\dagger b\rangle
			=
			\langle d^{k},x^k-A^\dagger b\rangle=-\lVert S_k^\top(A_{\mathcal{I}_r}Px^k-\hat{b}_{\mathcal{I}_r})\rVert_2^2=-\lVert S_k^\top(A_{\mathcal{I}_r}x^k-b_{\mathcal{I}_r})\rVert_2^2.
		\end{aligned}
	\end{equation*}
	
	We next demonstrate by induction that for $\ell>1$, $p^k\neq0$ and $\langle p^k,x^k-A^\dagger b\rangle=-\lVert S_k^\top(A_{\mathcal{I}_r}x^k-b_{\mathcal{I}_r})\rVert_2^2$. 
	For $k=0$, it holds that $p^0=d^0\neq0$ and 
	\begin{equation*}
		\begin{aligned}
			\langle p^{0},x^0-A^\dagger b\rangle
			=
			\langle d^{0},x^0-A^\dagger b\rangle=-\lVert S_0^\top(A_{\mathcal{I}_r}Px^0-\hat{b}_{\mathcal{I}_r})\rVert_2^2=-\lVert S_0^\top(A_{\mathcal{I}_r}x^0-b_{\mathcal{I}_r})\rVert_2^2.
		\end{aligned}
	\end{equation*}
	For $k=1$, it follows from the optimality condition of
	\eqref{iter-xk+1-cg} that
	\(
	\langle p^{0},x^1-A^\dagger b\rangle=0.
	\)
	Thus, 
	\begin{equation*}
		\begin{aligned}
			\langle p^{1},x^1-A^\dagger b\rangle&
			=\langle d^{1},x^1-A^\dagger b\rangle
			-\frac{\langle d^1,p^0\rangle}{\lVert p^0\rVert_2^2}\langle p^{0},x^1-A^\dagger b\rangle\\
			&=\langle d^{1},x^1-A^\dagger b\rangle
			=-\lVert S_1^\top(A_{\mathcal{I}_r}Px^1-\hat{b}_{\mathcal{I}_r})\rVert_2^2=-\lVert S_1^\top(A_{\mathcal{I}_r}x^1-b_{\mathcal{I}_r})\rVert_2^2\neq0,
		\end{aligned}
	\end{equation*}
	which ensures that $p^1\neq0$. 
	
	We next consider the case where $k\geq2$. 
	Assume that $p^i\neq0$ for $i=0,1,\ldots,k-1$. 
	By the definition of $p^k$ in \eqref{iter-pk}, we have
	\begin{equation}\label{eq-pk-inner-general}
		\begin{aligned}
			\langle p^{k},x^k-A^\dagger b\rangle
			&=
			\langle d^{k},x^k-A^\dagger b\rangle
			-
			\sum_{i=j_{k,\ell}}^{k-1}
			\frac{\langle d^k,p^i\rangle}{\lVert p^i\rVert_2^2}
			\langle p^{i},x^k-A^\dagger b\rangle.
		\end{aligned}
	\end{equation}
	We now show that $\langle p^{i},x^k-A^\dagger b\rangle=0$ for $i=j_{k,\ell},\ldots,k-1$. 
	For \(i=k-1\), it follows from the optimality condition of
	\eqref{iter-xk+1-cg} that
	\(
	\langle p^{k-1},x^k-A^\dagger b\rangle=0.
	\)
	For any \(i=j_{k,\ell},\ldots,k-2\), it follows that 
	\[
	x^k-A^\dagger b
	=
	\sum_{j=i+1}^{k-1}(x^{j+1}-x^j)
	+
	x^{i+1}-A^\dagger b.
	\]
	Taking the inner product with \(p^i\) on both sides, we have
	\begin{equation*}
		\begin{aligned}
			\langle p^{i},x^k-A^\dagger b\rangle
			&=
			\sum_{j=i+1}^{k-1}
			\langle p^{i},x^{j+1}-x^j\rangle
			+
			\langle p^{i},x^{i+1}-A^\dagger b\rangle=\sum_{j=i+1}^{k-1}
			\langle p^{i},x^{j+1}-x^j\rangle,
		\end{aligned}
	\end{equation*}
	where the last equation follows from the optimality condition of \eqref{iter-xk+1-cg}, which gives
	\(
	\langle p^{i},x^{i+1}-A^\dagger b\rangle=0.
	\)
	Since \(x^{j+1}-x^j\in\operatorname{span}\{p^j\}\) and 
	\(\langle p^i,p^j\rangle=0\) by Lemma \ref{lem-truncated-gs-orth}, we obtain $\langle p^{i},x^{j+1}-x^j\rangle=0$ for $j=i+1,\ldots,k-1$. 
	Thus, we have $\langle p^{i},x^k-A^\dagger b\rangle=0$ for $i=j_{k,\ell},\ldots,k-1$. 
	Substituting this equation into \eqref{eq-pk-inner-general}, we obtain 
	\begin{equation*}
		\begin{aligned}
			\langle p^{k},x^k-A^\dagger b\rangle
			=
			\langle d^{k},x^k-A^\dagger b\rangle=-\lVert S_k^\top(A_{\mathcal{I}_r}Px^k-\hat{b}_{\mathcal{I}_r})\rVert_2^2=-\lVert S_k^\top(A_{\mathcal{I}_r}x^k-b_{\mathcal{I}_r})\rVert_2^2\neq0,
		\end{aligned}
	\end{equation*}
	which ensures that $p^k\neq0$. 
	Hence, we obtain that $p^k\neq0$ for $k\geq0$. 
	By Lemma \ref{lem-truncated-gs-orth}, 
	if \(\ell\ge2\) and $k\geq1$, then
	\(
	\langle p^\mu,p^\nu\rangle=0\)
	for \( j_{k,\ell}\le \mu<\nu\le k .
	\)
	This completes the proof of the Lemma. 
\end{proof}
\subsection{Proof of Theorem \ref{Thm-Convergence-IS-Krylov-SC}}
Let \(\hat{S}_k\) be a sampling
matrix drawn from the probability space \((\Omega,\mathcal F,\mathbf{P})\). 
We define the auxiliary quantities
\begin{equation}\label{eq-hat-xk+1}
	\left\{
	\begin{aligned}
		\hat{d}^k
		&=-P A_{\mathcal I_r}^{\top}
		\hat{S}_k \hat{S}_k^\top
		(A_{\mathcal I_r}x^k-b_{\mathcal I_r}),\\
		\hat{p}^k
		&=\hat{d}^k-\sum_{i=j_{k,\ell}}^{k-1}
		\frac{\langle \hat{d}^k,p^i\rangle}{\lVert p^i\rVert_2^2}p^i,\\
		\hat{x}^{k+1}
		&=x^k+\hat{\delta}_k\hat{p}^k,
	\end{aligned}
	\right.
\end{equation}
where 
\begin{equation}\label{eq-hat-delta}
	\hat{\delta}_k
	=
	\begin{cases}
		\frac{
			\|\hat{S}_k^\top(A_{\mathcal I_r}x^k-b_{\mathcal I_r})\|_2^2
		}{
			\|\hat{p}^k\|_2^2
		},
		& \text{if }\hat{S}_k\in\mathcal Q_k,\\[1mm]
		0,
		& \text{if }\hat{S}_k\in\mathcal Q_k^c,
	\end{cases}
\end{equation}
and $\mathcal{Q}_k$ is defined in \eqref{eq-Q_k}. 
\begin{proof}[Proof of Theorem \ref{Thm-Convergence-IS-Krylov-SC}]
	We first demonstrate that $Ax^k=b$ implies that $x^k=A^\dagger b$. 
	Suppose that $Ax^k=b$ and $Ax^i\neq b$ for $i=0,1,\ldots,k-1$. 
	Since $d^k\in\operatorname{Range}(PA_{\mathcal{I}_r}^\top)$ for $k\geq0$, we obtain $p^k\in\operatorname{Range}(PA_{\mathcal{I}_r}^\top)$. 
	Hence, all iterates $x^k$ of Algorithm \ref{Algo-2} belong to $x^0+\operatorname{Range}(PA_{\mathcal{I}_r}^\top)$, which ensures that 
	$$\left(I-(A_{\mathcal{I}_r}P)^\dagger A_{\mathcal{I}_r}P\right)x^k=\left(I-(A_{\mathcal{I}_r}P)^\dagger A_{\mathcal{I}_r}P\right)A_{\mathcal{I}_p}^\dagger b_{\mathcal{I}_p}=A_{\mathcal{I}_p}^\dagger b_{\mathcal{I}_p},$$ 
	where the first equality follows from $x^0=A_{\mathcal{I}_p}^\dagger b_{\mathcal{I}_p}$. 
	Using the assumption $Ax^k=b$, we obtain $A_{\mathcal{I}_r}Px^k=\hat{b}_{\mathcal{I}_r}$ and 
	\begin{equation*}
		\begin{aligned}
			x^k&=(A_{\mathcal{I}_r}P)^\dagger A_{\mathcal{I}_r}Px^k+A_{\mathcal{I}_p}^\dagger b_{\mathcal{I}_p}=(A_{\mathcal{I}_r}P)^\dagger \hat{b}_{\mathcal{I}_r}+A_{\mathcal{I}_p}^\dagger b_{\mathcal{I}_p}=A^\dagger b,
		\end{aligned}
	\end{equation*}
	where the last equality follows from Lemma \ref{lemma-same-solution-set}. 
	
	We next consider the case where $Ax^k\neq b$. 
	From the proof of Theorem \ref{Thm-Convergence-SCRIM}, we obtain 
	\(
	\mathbf P(\hat{S}_k\in\mathcal Q_k\mid\mathcal B_k)>0.
	\)  
	Conditional on \(\mathcal B_k\), Step~6 of Algorithm~\ref{Algo-2} samples
	\(S_k\) according to the conditional distribution of \(\hat{S}_k\) given
	\(\hat{S}_k\in\mathcal Q_k\). Therefore, the random iterate \(x^{k+1}\) has
	the same conditional distribution as the auxiliary iterate \(\hat{x}^{k+1}\)
	conditioned on \(\hat{S}_k\in\mathcal Q_k\). 
	Hence,
	\begin{equation}\label{eq-0517-2}
		\mathbb E_k
		\left[
		\|x^{k+1}-A^\dagger b\|_2^2
		\right]
		=
		\frac{
			\mathbb E_k
			\left[
			\left\|\hat{x}^{k+1}-A^\dagger b\right\|_2^2
			\mathbbm{I}_{\mathcal Q_k}(\hat{S}_k)
			\right]
		}{
			\mathbf P(\hat{S}_k\in\mathcal Q_k\mid\mathcal B_k)
		}.
	\end{equation}
	If the sampling matrix $\hat{S}_k\in\mathcal{Q}_k$, we have 
	\begin{equation}\label{eq-0517-3}
		\begin{aligned}
			\lVert \hat{x}^{k+1}-A^\dagger b\rVert_2^2&=\lVert x^{k}+\hat{\delta}_k \hat{p}^k-A^\dagger b\rVert_2^2\\
			&=\lVert x^{k}-A^\dagger b\rVert_2^2+2\hat{\delta}_k\langle \hat{p}^k,x^k-A^\dagger b\rangle+\hat{\delta}_k^2\lVert \hat{p}^k\rVert_2^2\\
			&=\lVert x^{k}-A^\dagger b\rVert_2^2-2\frac{\lVert \hat{S}_{k}^\top(A_{\mathcal{I}_r}x^k-b_{\mathcal{I}_r})\rVert_2^4}{\lVert \hat{p}^k\rVert_2^2}+\frac{\lVert \hat{S}_{k}^\top(A_{\mathcal{I}_r}x^k-b_{\mathcal{I}_r})\rVert_2^4}{\lVert \hat{p}^k\rVert_2^2}\\
			&=\lVert x^{k}-A^\dagger b\rVert_2^2-\frac{\lVert \hat{S}_{k}^\top(A_{\mathcal{I}_r}x^k-b_{\mathcal{I}_r})\rVert_2^4}{\lVert \hat{p}^k\rVert_2^2},
		\end{aligned}
	\end{equation}
	where the third equality follows from Lemma \ref{lem-pkneq0} and the definition of $\hat{\delta}_k$ in (\ref{eq-hat-delta}). 
	It follows from the equation $\hat{p}^k=\hat{d}^k-\tilde{P}_k\tilde{P}_k^\dagger \hat{d}^k$ in \eqref{iter-pk-2} that $\lVert \hat{p}^k\rVert_2^2=\lVert \hat{d}^k\rVert_2^2-\lVert \tilde{P}_k\tilde{P}_k^\dagger \hat{d}^k\rVert_2^2$ and 
	\begin{equation}\label{eq-0517-4}
		\begin{aligned}
			\frac{\lVert \hat{S}_k^\top(A_{\mathcal{I}_r}x^k-b_{\mathcal{I}_r})\rVert_2^4}{\lVert \hat{p}^k\rVert_2^2}&=\frac{\lVert \hat{S}_k^\top(A_{\mathcal{I}_r}x^k-b_{\mathcal{I}_r})\rVert_2^4}{\lVert \hat{d}^k\rVert_2^2}\frac{\lVert \hat{d}^k\rVert_2^2}{\lVert \hat{d}^k\rVert_2^2-\lVert \tilde{P}_k\tilde{P}_k^\dagger \hat{d}^k\rVert_2^2}.
		\end{aligned}
	\end{equation}
	It follows from the definition of $\hat{d}^k$ that
	\begin{equation}\label{eq-0517-5}
		\begin{aligned}
			\frac{\lVert \hat{S}_k^\top(A_{\mathcal{I}_r}x^k-b_{\mathcal{I}_r})\rVert_2^4}{\lVert \hat{d}^k\rVert_2^2}&=\frac{\lVert \hat{S}_k^\top(A_{\mathcal{I}_r}x^k-b_{\mathcal{I}_r})\rVert_2^4}
			{\lVert PA_{\mathcal{I}_r}^\top \hat{S}_k\hat{S}_k^\top(A_{\mathcal{I}_r}x^k-b_{\mathcal{I}_r})\rVert_2^2}\geq\frac{\lVert \hat{S}_k^\top(A_{\mathcal{I}_r}x^k-b_{\mathcal{I}_r})\rVert_2^2}{\lVert \hat{S}_k^\top A_{\mathcal{I}_r}P\rVert_2^2}.
		\end{aligned}
	\end{equation}
	According to the definition of $q_k$ in (\ref{eq-q_k}), it holds that 
	\begin{equation}\label{eq-0517-6}
		\begin{aligned}
			\frac{\lVert \hat{d}^k\rVert_2^2}{\lVert \hat{d}^k\rVert_2^2-\lVert \tilde{P}_k\tilde{P}_k^\dagger \hat{d}^k\rVert_2^2}&=\left(1-\frac{\lVert \tilde{P}_k\tilde{P}_k^\dagger \hat{d}^k\rVert_2^2}{\lVert \hat{d}^k\rVert_2^2}\right)^{-1}\geq q_k.
		\end{aligned}
	\end{equation}
	Substituting \eqref{eq-0517-4}, \eqref{eq-0517-5} and \eqref{eq-0517-6} into \eqref{eq-0517-3}, we obtain
	\begin{equation}\label{eq-0517-7}
		\begin{aligned}
			\lVert \hat{x}^{k+1}-A^\dagger b\rVert_2^2&\leq\lVert x^{k}-A^\dagger b\rVert_2^2-q_k\frac{\lVert \hat{S}_{k}^\top(A_{\mathcal{I}_r}x^k-b_{\mathcal{I}_r})\rVert_2^2}{\lVert \hat{S}_k^\top A_{\mathcal{I}_r}P\rVert_2^2}.
		\end{aligned}
	\end{equation}
	Since $\mathbb{E}_{k}\left[\lVert x^k-A^\dagger b\rVert_2^2\mathbbm{I}_{\mathcal Q_k}(\hat{S}_k)\right]=
	\mathbf P(\hat{S}_k\in\mathcal Q_k\mid\mathcal B_k)\lVert x^k-A^\dagger b\rVert_2^2$, we have
	\begin{equation}\label{eq-0517-8}
		\begin{aligned}
			&\mathbb{E}_{k}\left[\lVert \hat{x}^{k+1}-A^\dagger b\rVert_2^2\mathbbm{I}_{\mathcal Q_k}(\hat{S}_k)\right]\\
			&\leq
			\mathbf P(\hat{S}_k\in\mathcal Q_k\mid\mathcal B_k)\lVert x^k-A^\dagger b\rVert_2^2-q_k\mathbb{E}_{k}\left[\frac{\lVert \hat{S}_k^\top(A_{\mathcal{I}_r}x^k-b_{\mathcal{I}_r})\rVert_2^2}{\|\hat{S}_k^\top A_{\mathcal I_r}P \|_2^2}\mathbbm{I}_{\mathcal Q_k}(\hat{S}_k)\right].\\
		\end{aligned}
	\end{equation}
	Dividing both sides of \eqref{eq-0517-8} by
	\(\mathbf P(\hat{S}_k\in\mathcal Q_k\mid\mathcal B_k)\) and using
	\eqref{eq-0517-2}, we have
	\begin{equation}\label{eq-0517-9}
		\begin{aligned}
			\mathbb E_k
			\left[
			\|x^{k+1}-A^\dagger b\|_2^2
			\right]&\leq
			\lVert x^k-A^\dagger b\rVert_2^2-\frac{q_k}{\mathbf P(\hat{S}_k\in\mathcal Q_k\mid\mathcal B_k)}\mathbb{E}_{k}\left[\frac{\lVert \hat{S}_k^\top(A_{\mathcal{I}_r}x^k-b_{\mathcal{I}_r})\rVert_2^2}{\|\hat{S}_k^\top A_{\mathcal I_r}P \|_2^2}\mathbbm{I}_{\mathcal Q_k}(\hat{S}_k)\right]\\
			&\leq
			\lVert x^k-A^\dagger b\rVert_2^2-q_k\mathbb{E}_{k}\left[\frac{\lVert \hat{S}_k^\top(A_{\mathcal{I}_r}x^k-b_{\mathcal{I}_r})\rVert_2^2}{\|\hat{S}_k^\top A_{\mathcal I_r}P \|_2^2}\mathbbm{I}_{\mathcal Q_k}(\hat{S}_k)\right]\\
			&=\lVert x^k-A^\dagger b\rVert_2^2-q_k\mathbb E_k\left[
			\frac{
				\|\hat{S}_k^\top(A_{\mathcal I_r}x^k-b_{\mathcal I_r})\|_2^2
			}{
				\|\hat{S}_k^\top A_{\mathcal I_r}P\|_2^2
			}		
			\right],
		\end{aligned}
	\end{equation}
	where the second inequality follows from $0<
	\mathbf P(\hat{S}_k\in\mathcal Q_k\mid\mathcal B_k)
	\leq1$. 
	The last equality follows by applying \eqref{eq-0514-10} with \(\bar S_k\) replaced by \(\hat S_k\), since \(\hat S_k\) is sampled from \((\Omega,\mathcal F,\mathbf P)\). 
	Substituting \eqref{eq-0514-11} into \eqref{eq-0517-9}, we have
	\begin{equation*}
		\begin{aligned}
			\mathbb{E}_k\left[\lVert x^{k+1}-A^\dagger b\rVert_2^2\right]
			&\leq\lVert x^k-A^\dagger b\rVert_2^2-
			q_k\lVert A_{\mathcal{I}_r}Px^k-\hat{b}_{\mathcal{I}_r}\rVert_H^2\\
			&\leq\left(1-
			q_k\sigma_{\min}^2(H^{\frac{1}{2}}A_{\mathcal{I}_r}P)\right)\lVert x^k-A^\dagger b\rVert_2^2,
		\end{aligned}
	\end{equation*}
	where the inequality follows from the positive definiteness of $H$ and $x^k-A^\dagger b\in\operatorname{Range}(PA_{\mathcal{I}_r}^\top)$, which we proceed to verify below by induction.
	When $k=0$, it holds that $x^0-A^\dagger b=A_{\mathcal{I}_p}^\dagger b_{\mathcal{I}_p}-A^\dagger b=-(A_{\mathcal{I}_r}P)^\dagger \hat{b}_{\mathcal{I}_r}\in\operatorname{Range}(PA_{\mathcal{I}_r}^\top)$.
	Assume that $x^{k-1}-A^\dagger b \in \operatorname{Range}(PA_{\mathcal{I}_r}^\top)$. 
	From the proof of Lemma \ref{lem-pkneq0}, we have $p^{k-1} \in \operatorname{Range}(PA_{\mathcal{I}_r}^\top)$. 
	Then it follows that $x^{k}-A^\dagger b=x^{k-1}-A^\dagger b+\delta_{k-1}p^{k-1}\in\operatorname{Range}(PA_{\mathcal{I}_r}^\top)$. 
	This completes the proof of $x^k-A^\dagger b\in\operatorname{Range}(PA_{\mathcal{I}_r}^\top)$ for $k\geq0$. 
	For any $S_k\in\mathcal{Q}_k$, it follows from Lemma \ref{lem-pkneq0} that $p^k=d^k-\tilde{P}_k\tilde{P}_k^\dagger d^k\neq0$. 
	Hence, we have
	$$
	0\leq\frac{\lVert \tilde{P}_k\tilde{P}_k^\dagger d^k\rVert_2^2}{\lVert d^k\rVert_2^2}<1,
	$$ 
	which implies that $q_k\geq 1$. 
	This completes the proof of the theorem. 
\end{proof}
\subsection{Proof of Lemma \ref{lemma-0423} and Theorem \ref{xie-equ-krylov}}
\begin{proof}[Proof of Lemma \ref{lemma-0423}]
	Let $\tilde{x}^{k+1}$ be the optimal solution of \eqref{iter-xk+1-cg2}. 
	If $\ell=1$, then $j_{k,\ell}=k$ for all $k\geq0$. 
	Then, the optimization problem \eqref{iter-xk+1-cg2} reduces to 
	\begin{equation*}
		\min_{x \in x^{k} + \operatorname{span}\{p^k\}} \|x - A^\dagger b\|_2^2,
	\end{equation*}
	which corresponds to \eqref{iter-xk+1-cg}. 
	This leads to $\tilde{x}^{k+1}=x^{k+1}$.
	
	We next demonstrate that $\tilde{x}^{k+1}=x^{k+1}$ for $\ell\geq2$.  
	Define
	\begin{equation*}
		\bar{P}_k := \left( p^{j_{k,\ell}},\, p^{j_{k,\ell}+1},\, \ldots,\, p^k \right) \in \mathbb{R}^{n \times (k - j_{k,\ell}+1)},
	\end{equation*}
	where the vectors $p^i$  are generated by Algorithm~\ref{Algo-2}. 
	Then, the minimization problem \eqref{iter-xk+1-cg2} reduces to solving
	\begin{equation*}
		\begin{aligned}
			s_{k}
			&=\argmin{s\in \mathbb{R}^{k-j_{k,\ell}+1}}\lVert x^{j_{k,\ell}}+\bar{P}_ks-A^\dagger b\rVert_2^2,
		\end{aligned}
	\end{equation*}
	which leads to the update $\tilde{x}^{k+1}=x^{j_{k,\ell}}+\bar{P}_ks_k$. 
	Then $s_k$ satisfies that 
	\begin{equation}\label{eq-opt}
		\begin{aligned}
			\bar{P}_k^\top \bar{P}_ks_k=-\bar{P}_k^\top(x^{j_{k,\ell}}-A^\dagger b).
		\end{aligned}
	\end{equation}
	Since the vectors $p^{j_{k,\ell}}, \ldots, p^k$ are mutually orthogonal, the matrix $\bar{P}_k^\top \bar{P}_k$ is diagonal with entries $\lVert p^i \rVert_2^2$ for $i = j_{k,\ell}, \ldots, k$. 
	Hence, (\ref{eq-opt}) reduces to
	\begin{equation*}
		\begin{aligned}
			\lVert p^i\rVert_2^2(s_k)_{i-j_{k,\ell}+1}=-\langle p^i,x^{j_{k,\ell}}-A^\dagger b\rangle, \ i=j_{k,\ell},\ldots,k.
		\end{aligned}
	\end{equation*}
	From the proof of Lemma \ref{lem-pkneq0}, we know that $\langle p^i,x^k-A^\dagger b\rangle=0,i=j_{k,\ell},\ldots,k-1$. 
	It further implies that 
	\begin{equation*}
		\langle p^i,x^{j_{k,\ell}}-A^\dagger b\rangle=\left\{
		\begin{array}{ll}
			\langle p^i,x^{j_{k,\ell}}-x^k\rangle, & \text{if }  i=j_{k,\ell},\ldots,k-1,\\
			\langle p^k,x^{j_{k,\ell}}-x^k\rangle+\langle p^k,x^k-A^\dagger b\rangle, & \text{if }  i=k.
		\end{array}
		\right.
	\end{equation*}
	For $k\geq0$, it holds that 
	\begin{equation}\label{eq-0517-1}
		x^k=x^{j_{k,\ell}}+x^k-x^{j_{k,\ell}}=x^{j_{k,\ell}}+\sum_{t=j_{k,\ell}}^{k-1}(x^{t+1}-x^t)=x^{j_{k,\ell}}+\sum_{t=j_{k,\ell}}^{k-1}\delta_tp^t.
	\end{equation}
	By the orthogonal property in Lemma \ref{lem-pkneq0}, we have
	\begin{equation*}
		\begin{aligned}
			\langle p^i,x^{j_{k,\ell}}-x^k\rangle=
			\left\{
			\begin{array}{ll}
				-\sum_{t=j_{k,\ell}}^{k-1}\delta_t\langle p^i,p^t\rangle=-\delta_i\lVert p^i\rVert_2^2, & \text{if }  i=j_{k,\ell},\ldots,k-1,\\
				-\sum_{t=j_{k,\ell}}^{k-1}\delta_t\langle p^k,p^t\rangle=0, & \text{if }  i=k.
			\end{array}
			\right.
		\end{aligned}
	\end{equation*}
	Thus, we have 
	\begin{equation*}
		\begin{aligned}
			(s_k)_{i-j_{k,\ell}+1}=
			\left\{
			\begin{array}{ll}
				-\frac{\langle p^i,x^{j_{k,\ell}}-A^\dagger b\rangle}{\lVert p^i\rVert_2^2}=\delta_i, & \text{if }  i=j_{k,\ell},\ldots,k-1,\\
				-\frac{\langle p^k,x^k-A^\dagger b\rangle}{\lVert p^k\rVert_2^2}=\frac{\lVert S_{k}^\top(A_{\mathcal{I}_r}x^k-b_{\mathcal{I}_r})\rVert_2^2}{\lVert p^k\rVert_2^2}=\delta_k, & \text{if }  i=k.
			\end{array}
			\right.
		\end{aligned}
	\end{equation*}
	Then, it follows from the equation \eqref{eq-0517-1} that $$\tilde{x}^{k+1}=x^{j_{k,\ell}}+\sum_{i=j_{k,\ell}}^k\delta_ip^i=x^{j_{k,\ell}}+\sum_{i=j_{k,\ell}}^{k-1}\delta_ip^i+\delta_kp^k=x^k+\delta_kp^k=x^{k+1}.$$ 
	This completes the proof of the lemma. 
\end{proof}
\begin{proof}[Proof of Theorem \ref{xie-equ-krylov}]
	Since $\ell=\infty$, we have $j_{k,\ell}=0$, and hence
	\[
	\Theta_k=x^0+\operatorname{span}\{p^0,p^1,\ldots,p^k\}.
	\]
	Since $\Omega=\{I\}$, the stochastic gradient direction reduces to
	$d^k=-P A_{\mathcal I_r}^{\top}(A_{\mathcal I_r}Px^k-\hat{b}_{\mathcal I_r})$. 
	We first show by induction that $
	\operatorname{span}\{p^0,p^1,\ldots,p^k\}
	=
	\mathcal K_{k+1}
	\left(
	P A_{\mathcal I_r}^{\top}A_{\mathcal I_r}P,\;
	P A_{\mathcal I_r}^{\top}\hat r_{\mathcal I_r}^0
	\right)
	$ for $k\geq0$. 
	When $k=0$, it holds that
	$p^0=d^0=-P A_{\mathcal I_r}^{\top}\hat r_{\mathcal I_r}^0$, and therefore
	\(
	\operatorname{span}\{p^0\}
	=
	\mathcal K_1
	\left(
	P A_{\mathcal I_r}^{\top}A_{\mathcal I_r}P,\;
	P A_{\mathcal I_r}^{\top}\hat r_{\mathcal I_r}^0
	\right).
	\)
	Assume that 
	\(
	\operatorname{span}\{p^0,p^1,\ldots,p^i\}
	=
	\mathcal K_{i+1}
	\left(
	P A_{\mathcal I_r}^{\top}A_{\mathcal I_r}P,\;
	P A_{\mathcal I_r}^{\top}\hat r_{\mathcal I_r}^0
	\right)
	\) for $i=0,1,\ldots,k-1$. 
	Since $\mathcal K_{i+1}
	\left(
	P A_{\mathcal I_r}^{\top}A_{\mathcal I_r}P,\;
	P A_{\mathcal I_r}^{\top}\hat r_{\mathcal I_r}^0
	\right)\subseteq\mathcal K_{k+1}
	\left(
	P A_{\mathcal I_r}^{\top}A_{\mathcal I_r}P,\;
	P A_{\mathcal I_r}^{\top}\hat r_{\mathcal I_r}^0
	\right)$ for $i=0,1,\ldots,k$, 
	we obtain that $p^i\in\mathcal K_{k}
	\left(
	P A_{\mathcal I_r}^{\top}A_{\mathcal I_r}P,\;
	P A_{\mathcal I_r}^{\top}\hat r_{\mathcal I_r}^0
	\right)$. 
	Since
	$x^k-x^0=
	\sum_{i=0}^{k-1}(x^{i+1}-x^i)
	=
	\sum_{i=0}^{k-1}\delta_i p^i
	$, we have
	\[
	x^k-x^0
	\in\operatorname{span}\{p^0,p^1,\ldots,p^{k-1}\}\subseteq
	\mathcal K_k
	\left(
	P A_{\mathcal I_r}^{\top}A_{\mathcal I_r}P,\;
	P A_{\mathcal I_r}^{\top}\hat r_{\mathcal I_r}^0
	\right).
	\]
	Using
	\(
	\hat r_{\mathcal I_r}^k
	=
	\hat r_{\mathcal I_r}^0
	+
	A_{\mathcal I_r}P(x^k-x^0),
	\)
	we obtain
	\[
	\begin{aligned}
		d^k
		&=
		-P A_{\mathcal I_r}^{\top}(A_{\mathcal I_r}Px^k-\hat{b}_{\mathcal I_r})\\  
		&=
		-P A_{\mathcal I_r}^{\top}(A_{\mathcal I_r}Px^0-\hat{b}_{\mathcal I_r})
		-
		P A_{\mathcal I_r}^{\top}A_{\mathcal I_r}P(x^k-x^0)
		\in
		\mathcal K_{k+1}
		\left(
		P A_{\mathcal I_r}^{\top}A_{\mathcal I_r}P,\;
		P A_{\mathcal I_r}^{\top}\hat r_{\mathcal I_r}^0
		\right).
	\end{aligned}
	\]
	From  \eqref{iter-pk}, we know that $p^k=d^k-\sum_{i=j_{k,\ell}}^{k-1}\frac{\langle d^k,p^i\rangle}{\lVert p^i\rVert_2^2}p^i$, therefore 
	\[
	p^k\in
	\mathcal K_{k+1}
	\left(
	P A_{\mathcal I_r}^{\top}A_{\mathcal I_r}P,\;
	P A_{\mathcal I_r}^{\top}\hat r_{\mathcal I_r}^0
	\right).
	\]
	Thus, we obtain
	\(
	\operatorname{span}\{p^0,p^1,\ldots,p^k\}
	\subseteq
	\mathcal K_{k+1}
	\left(
	P A_{\mathcal I_r}^{\top}A_{\mathcal I_r}P,\;
	P A_{\mathcal I_r}^{\top}\hat r_{\mathcal I_r}^0
	\right).
	\)
	Since \(p^0,p^1,\ldots,p^k\) are nonzero and mutually orthogonal by Lemma \ref{lem-pkneq0}, the subspace $\operatorname{span}\{p^0,p^1,\ldots,p^k\}$ has dimension \(k+1\).
	Since the dimension of $\mathcal K_{k+1}
	\left(
	P A_{\mathcal I_r}^{\top}A_{\mathcal I_r}P,\;
	P A_{\mathcal I_r}^{\top}\hat r_{\mathcal I_r}^0
	\right)$ is at most $k+1$, it holds that
	\[
	\operatorname{span}\{p^0,p^1,\ldots,p^k\}
	=
	\mathcal K_{k+1}
	\left(
	P A_{\mathcal I_r}^{\top}A_{\mathcal I_r}P,\;
	P A_{\mathcal I_r}^{\top}\hat r_{\mathcal I_r}^0
	\right),
	\]
	which gives
	\(
	x^0+\operatorname{span}\{p^0,p^1,\ldots,p^k\}
	=
	x^0+
	\mathcal K_{k+1}
	\left(
	P A_{\mathcal I_r}^{\top}A_{\mathcal I_r}P,\;
	P A_{\mathcal I_r}^{\top}\hat r_{\mathcal I_r}^0
	\right).
	\)
	
	We next show that 
	$\mathcal K_{k+1}
	\left(
	P A_{\mathcal I_r}^{\top}A_{\mathcal I_r}P,\;
	P A_{\mathcal I_r}^{\top}\hat r_{\mathcal I_r}^0
	\right)=\mathcal K_{k+1}(\hat A^\top\hat A,\hat A^\top\hat r^0)$. 
	Since $A_{\mathcal{I}_p}x^0=b_{\mathcal I_p}$, we have
	\[
	\begin{aligned}
		\hat A^\top\hat r^0
		&=
		\begin{pmatrix}
			A_{\mathcal I_p}^{\top} & P A_{\mathcal I_r}^{\top}
		\end{pmatrix}
		\begin{pmatrix}
			A_{\mathcal I_p}x^0-b_{\mathcal I_p} \\
			A_{\mathcal I_r}P x^0-\hat b_{\mathcal I_r}
		\end{pmatrix}
		=P A_{\mathcal I_r}^{\top}\hat r_{\mathcal I_r}^0 .
	\end{aligned}
	\]
	From the equation $\hat A^\top\hat A
	=
	A_{\mathcal I_p}^{\top}A_{\mathcal I_p}
	+
	P A_{\mathcal I_r}^{\top}A_{\mathcal I_r}P$ in Lemma \ref{lemma-same-solution-set} and \(A_{\mathcal I_p}P=0\), we obtain
	\[
	\hat A^\top\hat A\hat A^\top\hat r^0
	=
	P A_{\mathcal I_r}^{\top}A_{\mathcal I_r}P
	P A_{\mathcal I_r}^{\top}\hat r_{\mathcal I_r}^0 \text{ and } \hat A^\top\hat AP A_{\mathcal I_r}^{\top}A_{\mathcal I_r}P=\left(P A_{\mathcal I_r}^{\top}A_{\mathcal I_r}P\right)^2.
	\]
	For \(i=2,\ldots,k\), it holds that 
	\[
	\begin{aligned}
		(\hat A^\top\hat A)^{i}\hat A^\top\hat r^0
		&=
		\left(\hat A^\top\hat A\right)^{i-1}
		\hat A^\top\hat A\hat A^\top\hat r^0\\
		&=\left(\hat A^\top\hat A\right)^{i-1}P A_{\mathcal I_r}^{\top}A_{\mathcal I_r}P
		P A_{\mathcal I_r}^{\top}\hat r_{\mathcal I_r}^0=\left(P A_{\mathcal I_r}^{\top}A_{\mathcal I_r}P\right)^{i}
		P A_{\mathcal I_r}^{\top}\hat r_{\mathcal I_r}^0.
	\end{aligned}
	\]
	This implies
	\(
	\mathcal K_{k+1}(\hat A^\top\hat A,\hat A^\top\hat r^0)
	=
	\mathcal K_{k+1}
	\left(
	P A_{\mathcal I_r}^{\top}A_{\mathcal I_r}P,\;
	P A_{\mathcal I_r}^{\top}\hat r_{\mathcal I_r}^0
	\right).
	\)
	Combining the above identities, we obtain
	\[
	x^0+\operatorname{span}\{p^0,p^1,\ldots,p^k\}=x^0+
	\mathcal K_{k+1}
	\left(
	P A_{\mathcal I_r}^{\top}A_{\mathcal I_r}P,\;
	P A_{\mathcal I_r}^{\top}\hat r_{\mathcal I_r}^0
	\right)
	=
	x^0+\mathcal K_{k+1}(\hat A^\top\hat A,\hat A^\top\hat r^0).
	\]
	This completes the proof of the theorem.
\end{proof}
\end{document}